\newtheorem{thm}{Theorem}[section]
\newtheorem{prop}[thm]{Proposition}
\newtheorem{lem}[thm]{Lemma}
\newtheorem{lem-def}[thm]{Lemma-Definition}
\newtheorem{cor}[thm]{Corollary}
\theoremstyle{definition}
\newtheorem{ex}[thm]{Example}
\newtheorem{construction}[thm]{Construction}
\newtheorem{rmk}[thm]{Remark}
\newtheorem{dfn}[thm]{Definition}
\numberwithin{equation}{section}
\newcommand{\nc}{\newcommand}
\nc{\on}{\operatorname}
\nc{\fraka}{{\mathfrak a}}
\nc{\frakb}{{\mathfrak b}}
\nc{\frakc}{{\mathfrak c}}
\nc{\frakd}{{\mathfrak d}}
\nc{\frake}{{\mathfrak e}}
\nc{\frakf}{{\mathfrak f}}
\nc{\frakg}{{\mathfrak g}}
\nc{\frakh}{{\mathfrak h}}
\nc{\fraki}{{\mathfrak i}}
\nc{\frakj}{{\mathfrak j}}
\nc{\frakk}{{\mathfrak k}}
\nc{\frakl}{{\mathfrak l}}
\nc{\frakm}{{\mathfrak m}}
\nc{\frakn}{{\mathfrak n}}
\nc{\frako}{{\mathfrak o}}
\nc{\frakp}{{\mathfrak p}}
\nc{\frakq}{{\mathfrak q}}
\nc{\frakr}{{\mathfrak r}}
\nc{\fraks}{{\mathfrak s}}
\nc{\frakt}{{\mathfrak t}}
\nc{\fraku}{{\mathfrak u}}
\nc{\frakv}{{\mathfrak v}}
\nc{\frakw}{{\mathfrak w}}
\nc{\frakx}{{\mathfrak x}}
\nc{\fraky}{{\mathfrak y}}
\nc{\frakz}{{\mathfrak z}}
\nc{\frakA}{{\mathfrak A}}
\nc{\frakB}{{\mathfrak B}}
\nc{\frakC}{{\mathfrak C}}
\nc{\frakD}{{\mathfrak D}}
\nc{\frakE}{{\mathfrak E}}
\nc{\frakF}{{\mathfrak F}}
\nc{\frakG}{{\mathfrak G}}
\nc{\frakH}{{\mathfrak H}}
\nc{\frakI}{{\mathfrak I}}
\nc{\frakJ}{{\mathfrak J}}
\nc{\frakK}{{\mathfrak K}}
\nc{\frakL}{{\mathfrak L}}
\nc{\frakM}{{\mathfrak M}}
\nc{\frakN}{{\mathfrak N}}
\nc{\frakO}{{\mathfrak O}}
\nc{\frakP}{{\mathfrak P}}
\nc{\frakQ}{{\mathfrak Q}}
\nc{\frakR}{{\mathfrak R}}
\nc{\frakS}{{\mathfrak S}}
\nc{\frakT}{{\mathfrak T}}
\nc{\frakU}{{\mathfrak U}}
\nc{\frakV}{{\mathfrak V}}
\nc{\frakW}{{\mathfrak W}}
\nc{\frakX}{{\mathfrak X}}
\nc{\frakY}{{\mathfrak Y}}
\nc{\frakZ}{{\mathfrak Z}}
\nc{\bbA}{{\mathbb A}}
\nc{\bbB}{{\mathbb B}}
\nc{\bbC}{{\mathbb C}}
\nc{\bbD}{{\mathbb D}}
\nc{\bbE}{{\mathbb E}}
\nc{\bbF}{{\mathbb F}}
\nc{\bbG}{{\mathbb G}}
\nc{\bbH}{{\mathbb H}}
\nc{\bbI}{{\mathbb I}}
\nc{\bbJ}{{\mathbb J}}
\nc{\bbK}{{\mathbb K}}
\nc{\bbL}{{\mathbb L}}
\nc{\bbM}{{\mathbb M}}
\nc{\bbN}{{\mathbb N}}
\nc{\bbO}{{\mathbb O}}
\nc{\bbP}{{\mathbb P}}
\nc{\bbQ}{{\mathbb Q}}
\nc{\bbR}{{\mathbb R}}
\nc{\bbS}{{\mathbb S}}
\nc{\bbT}{{\mathbb T}}
\nc{\bbU}{{\mathbb U}}
\nc{\bbV}{{\mathbb V}}
\nc{\bbW}{{\mathbb W}}
\nc{\bbX}{{\mathbb X}}
\nc{\bbY}{{\mathbb Y}}
\nc{\bbZ}{{\mathbb Z}}
\nc{\calA}{{\mathcal A}}
\nc{\calB}{{\mathcal B}}
\nc{\calC}{{\mathcal C}}
\nc{\calD}{{\mathcal D}}
\nc{\calE}{{\mathcal E}}
\nc{\calF}{{\mathcal F}}
\nc{\calG}{{\mathcal G}}
\nc{\calH}{{\mathcal H}}
\nc{\calI}{{\mathcal I}}
\nc{\calJ}{{\mathcal J}}
\nc{\calK}{{\mathcal K}}
\nc{\calL}{{\mathcal L}}
\nc{\calM}{{\mathcal M}}
\nc{\calN}{{\mathcal N}}
\nc{\calO}{{\mathcal O}}
\nc{\calP}{{\mathcal P}}
\nc{\calQ}{{\mathcal Q}}
\nc{\calR}{{\mathcal R}}
\nc{\calS}{{\mathcal S}}
\nc{\calT}{{\mathcal T}}
\nc{\calU}{{\mathcal U}}
\nc{\calV}{{\mathcal V}}
\nc{\calW}{{\mathcal W}}
\nc{\calX}{{\mathcal X}}
\nc{\calY}{{\mathcal Y}}
\nc{\calZ}{{\mathcal Z}}
\nc{\scrA}{{\mathscr A}}
\nc{\scrB}{{\mathscr B}}
\nc{\scrR}{{\mathscr R}}
\nc{\bnu}{{\bar{ \nu}}}
\nc{\Gmon}{{\bbG_m\text{-mon}}}
\nc{\olO}{\bar{\calO}}
\nc{\al}{{\alpha}} 
\nc{\be}{{\beta}}
\nc{\ga}{{\gamma}} \nc{\Ga}{{\Gamma}}
 \nc{\hGa}{\hat{\Gamma}}
\nc{\ve}{{\varepsilon}} 
\nc{\la}{{\lambda}} \nc{\La}{{\Lambda}}
\nc{\om}{\omega} \nc{\Om}{\Omega} 
\nc{\sig}{{\sigma}} \nc{\Sig}{{\Sigma}}
\nc{\tnb}{\psi_{\rm tame}}
\nc{\Eq}{\text{Eq}} 
\nc{\colim}{\text{colim}} 
\nc{\op}{{\on{op}}}
\nc{\ad}{{\on{ad}}}
\nc{\alg}{{\on{alg}}}
\nc{\Ad}{{\on{Ad}}}
\nc{\Adm}{{\on{Adm}}} \nc{\aff}{{\on{af}}}
\nc{\Aut}{{\on{Aut}}}
\nc{\Bun}{{\on{Bun}}}
\nc{\cha}{{\on{char}}}
\nc{\der}{{\on{der}}}
\nc{\Der}{{\on{Der}}}
\nc{\diag}{{\on{diag}}}
\nc{\End}{{\on{End}}}
\nc{\Fl}{{\calF\!\ell}}
\nc{\Gal}{{\on{Gal}}}
\nc{\Gr}{{\on{Gr}}}
\nc{\rH}{{\on{H}}}
\nc{\Hom}{{\on{Hom}}}
\nc{\IC}{{\on{IC}}}
\nc{\id}{{\on{id}}}
\nc{\Id}{{\on{Id}}}
\nc{\ind}{{\on{ind}}}
\nc{\Ind}{{\on{Ind}}}
\nc{\Lie}{{\on{Lie}}}
\nc{\Pic}{{\on{Pic}}}
\nc{\pr}{{\on{pr}}}
\nc{\Res}{{\on{Res}}}
\nc{\res}{{\on{res}}} \nc{\Sat}{{\on{Sat}}}
\nc{\s}{{\on{sc}}}
\nc{\drv}{{\on{der}}}
\nc{\sgn}{{\on{sgn}}}
\nc{\Spec}{{\on{Spec}}}\nc{\Spf}{\on{Spf}} 
\nc{\Sph}{\on{Sph}}
\nc{\St}{{\on{St}}}
\nc{\tr}{{\on{tr}}}
\nc{\Tr}{{\on{Tr}}}
\nc{\Mod}{{\mathrm{-Mod}}}
\nc{\Hilb}{{\on{Hilb}}} 
\nc{\Ext}{{\on{Ext}}} 
\nc{\vs}{{\on{Vec}}}
\nc{\ev}{{\on{ev}}}
\nc{\nO}{{\breve{\calO}}}
\nc{\tS}{{\tilde{S}}}
\nc{\spe}{{\on{sp}}}
\nc{\cok}{{\on{Coker}}}
\nc{\nscrR}{{\mathscr{R}^{\on{nr}}}}
\nc{\GL}{{\on{GL}}}
\nc{\U}{{\on{U}}}
\nc{\Gl}{\on{Gl}} 
\nc{\GSp}{{\on{GSp}}}
\nc{\gl}{{\frakg\frakl}}
\nc{\SL}{{\on{SL}}} 
\nc{\SU}{{\on{SU}}} 
\nc{\SO}{{\on{SO}}}
\nc{\Conv}{{\on{Conv}}}
\nc{\Rep}{{\on{Rep}}}
\nc{\Dom}{{\on{Dom}}}
\nc{\red}{{\on{red}}}
\nc{\act}{{\on{act}}}
\nc{\nr}{{\on{nr}}}
\nc{\ctf}{{\on{ctf}}}
\nc{\str}{{\on{-}}} 
\nc{\os}{{\bar{s}}}
\nc{\oeta}{{\bar{\eta}}}
\nc{\hookto}{\hookrightarrow}
\nc{\longto}{\longrightarrow}
\nc{\leftto}{\leftarrow}
\nc{\onto}{\twoheadrightarrow}
\nc{\lonto}{\twoheadleftarrow}
\nc{\bio}{{\bar{i}}}
\nc{\bjay}{{\bar{j}}}
\nc{\oFl}{{\overline{\Fl}}} 
\nc{\bU}{{\overline{U}}}
\nc{\tGr}{{\tilde{\Gr}}}
\nc{\cGr}{\calG\! r}
\nc{\oGr}{\overline{\on{Gr}}} 
\nc{\ocGr}{\overline{\calG\! r}}
\nc{\co}{{\colon}}
\nc{\sch}[1]{(Sch/{#1})}
\nc{\HypLoc}[1]{HypLoc({#1})}
\nc{\ohtimes}{\stackrel{!}{\otimes}}
\nc{\boxtilde}{\widetilde{\boxtimes}}
\nc{\vstar}{{\varhexstar}}
\nc{\bslash}{\backslash}
\nc{\lisset}{\text{lis-\'et}}
\nc{\algQl}{{\bar{\bbQ}_\ell}}
\nc{\sF}{{\bar{F}}}
\nc{\nF}{{\breve{F}}}
\nc{\nW}{{W^{\on{nr}}}}
\nc{\sk}{{\bar{k}}}
\nc{\cont}{\on{c}}
\nc{\supp}{\on{supp}}
\nc{\blt}{\bullet}  
\nc{\dom}{\on{dom}}
\nc{\scon}{{\on{sc}}} 
\nc{\Affine}{\on{Aff}} 
\nc{\nscrA}{\mathscr{A}^{\on{nr}}} 
\nc{\nfraka}{{\fraka^{\on{nr}}}}
\nc{\ran}{{\rangle}}
\nc{\lan}{{\langle}}
\nc{\bk}{{\bar{k}}}
\nc{\tF}{{\tilde{F}}}
\nc{\sS}{{\bar{S}}}
\nc{\LG}{{^\text{L}\hspace{-0.04cm}G}}
\nc{\LL}{{^\text{L}\hspace{-0.07cm}L}}
\nc{\pot}[1]{ [\hspace{-0,5mm}[ {#1} ]\hspace{-0,5mm}] }
\nc{\rpot}[1]{ (\hspace{-0,7mm}( {#1} )\hspace{-0,7mm}) }
\nc{\defined}{\hspace{0.1cm}\stackrel{\text{\tiny \rm def}}{=}\hspace{0.1cm}}
\begin{document}

\title[Hyperbolic localization and nearby cycles]{Spaces with $\bbG_m$-action, hyperbolic localization \\ and nearby cycles}
\author[Timo Richarz]{by Timo Richarz$^*$}
\thanks{$^*$This work was finished while the author was supported by the Max-Planck-Institut f\"ur Mathematik in Bonn. He thanks everyone cordially for hospitality and excellent working conditions.}

\address{Timo Richarz, University of Duisburg-Essen, Faculty of Mathematics, Thea-Leymann-Str. 9, 45127 Essen, Germany}
\email{timo.richarz@uni-due.de}

\maketitle

\begin{abstract}
We study families of algebraic spaces with $\bbG_m$-action and prove Braden's theorem \cite{Br03}, \cite{DG15} on hyperbolic localization for arbitrary base schemes. As an application, we obtain that hyperbolic localization commutes with nearby cycles.
\end{abstract}

\setcounter{section}{-1}


\phantom{h}

\thispagestyle{empty}

\section{Introduction} 
Algebraic varieties $X$ with an action of the multiplicative group $\bbG_m$ are a classical object of study \cite{BB73}. The $\bbG_m$-action induces two stratifications on $X$: the strata of points $X^+$ floating to the fixed points, and the strata of points $X^-$ floating away from the fixed points. 
In \cite{Br03} Braden proves a general theorem on localizing equivariant objects on $X$ to the subspace of fixed points $X^0$. The result is used in a number of places \cite{MV07}, \cite{Ach11}, \cite{AcHR15}, and has proven to be of importance for geometric methods in representation theory, e.g. induction and restriction of character sheaves. Braden's theorem is generalized by Drinfeld and Gaitsgory \cite{DG15} to algebraic spaces over fields. In the present manuscript, we consider families of spaces with $\bbG_m$-action and study the behavior under base change. The main motivation is the commutation of hyperbolic localization with nearby cycles which is inspired by a result of Arkhipov-Bezrukavnikov \cite[Thm. 4]{AB09}, and which is used in subsequent work to prove the test function conjecture of Haines-Kottwitz for parahoric local models, cf.~\cite{HR18a,HR18b}. 


\subsection{Statement of results} Let $S$ be a scheme, and let $X/S$ be an algebraic space in the sense of \cite{StaPro}. If $\bbG_m$ acts on $X/S$ (trivial on $S$), there are the following three functors on the category of $S$-schemes
\begin{equation}\label{intro1}
\begin{aligned}
\hspace{1cm} X^0\co&\; T\longmapsto \Hom^{\bbG_{m}}_S(T, X)\\
\hspace{1cm} X^+\co& \; T\longmapsto \Hom^{\bbG_{m}}_S((\bbA_T^1)^+, X)\\
\hspace{1cm}X^-\co& \; T\longmapsto \Hom^{\bbG_{m}}_S((\bbA^1_T)^-, X),
\end{aligned}
\end{equation}
where $(\bbA_T^1)^+$ (resp. $(\bbA_T^1)^-$) is $\bbA^1_T$ with the usual (resp. opposite) $\bbG_m$-action. The functor $X^0$ is the functor of $\bbG_m$-fixed points in $X$, and $X^+$ (resp. $X^-$) is called the attractor (resp. repeller). Informally speaking $X^+$ (resp. $X^-$) is the space of points $x$ such that the limit $\lim_{\la\to 0}\la\cdot x$ (resp. $\lim_{\la\to \infty}\la\cdot x$) exists. Note that the formation of $X^0$ and $X^\pm$ commutes with arbitrary base change $S'\to S$.\\
In many cases, the $\bbG_m$-action on a space is locally linear, and we consider the following notion. We say that a $\bbG_m$-action on $X/S$ is \emph{\'etale locally linearizable} if there exists a $\bbG_m$-equivariant \'etale covering family $\{U_i\to X\}_i$, where the $U_i$ are $S$-affine schemes with $\bbG_m$-action. By upcoming results of Alper-Hall-Rydh \cite{AHR16}, cf. \S \ref{secintro2} for more details, every $\bbG_m$-action on a quasi-separated algebraic space $X/S$ locally of finite presentation is \'etale locally linearizable (no condition on $S$). \smallskip\\

\noindent {\bf Theorem A.} {\it Let $S$ be a scheme, and let $X/S$ be an algebraic space with an \'etale locally linearizable $\bbG_m$-action.\smallskip\\
i) The functor $X^0$ is representable by a closed subspace of $X$. \smallskip\\
ii) The functor $X^\pm$ is representable by a $X^0$-affine algebraic space. \smallskip\\
iii) If $X/S$ is locally of finite presentation (resp. quasi-compact; resp. quasi-separated; resp. separated; resp. smooth; resp. is a scheme), so are $X^0$ and $X^\pm$.}\smallskip\\

Let $X/S$ be an algebraic space locally of finite presentation with an \'etale locally linearizable $\bbG_m$-action. There are maps locally of finite presentation of $S$-spaces
\begin{equation}\label{intro2}
\begin{tikzpicture}[baseline=(current  bounding  box.center)]
\matrix(a)[matrix of math nodes, 
row sep=1.0em, column sep=2em, 
text height=1.5ex, text depth=0.45ex] 
{ & X^{\pm}&\\ 
X^0&&X,\\ }; 
\path[->](a-1-2) edge node[above] {$q^{\pm}$\;}(a-2-1); 
\path[->](a-1-2) edge node[above] {\;\;\;\;$p^{\pm}$} (a-2-3); 
\end{tikzpicture}
\end{equation}
where $q^{\pm}$ is given by evaluating a morphism at the zero section, and $p^{\pm}$ by evaluating a morphism at the unit section. Let $n>1$ be a positive integer invertible on $S$, and denote by $D(X,\bbZ/n)$ the unbounded derived category of $(X_{\text{\'et}},\bbZ/n)$-modules, where $X_{\text{\'et}}$ is the \'etale topos of $X$. Let us define two functors from $D(X,\bbZ/n)$ to $D(X^0,\bbZ/n)$ by pull-push as follows
\begin{equation}\label{intro3}
\begin{aligned}
L_{X/S}^+&\defined  (q^+)_!\circ (p^+)^*\\
L_{X/S}^-&\defined  (q^-)_*\circ (p^-)^!.
\end{aligned}
\end{equation}
As in Braden's work \cite{Br03} (or Drinfeld-Gaitsgory's work \cite{DG15}) there exists a natural transformation of functors
\begin{equation}\label{intro4}
L_{X/S}^-\longto L_{X/S}^+.
\end{equation}
Let $a, p\co \bbG_{m,S}\times_S X\to X$ denote the action (resp. projection). We say a complex in $D(X,\bbZ/n)$ is (naively) $\bbG_m$-equivariant if there exists an isomorphism $a^*\calA\simeq p^*\calA$ in $D(\bbG_{m,S}\times_S X,\bbZ/n)$.  Let us define $D(X,\bbZ/n)^{\Gmon}$ to be the full subcategory strongly generated by $\bbG_m$-equivariant complexes, i.e. generated by a finite iteration of taking the cone of a morphism in $D(X,\bbZ/n)$. The objects in $D(X,\bbZ/n)^{\Gmon}$ are called $\bbG_m$-monodromic. \smallskip\\

\noindent {\bf Theorem B.} {\it Let $S$ be a scheme, and let $X/S$ be an algebraic space locally of finite presentation with an \'etale locally linearizable $\bbG_m$-action. Let $\calA\in D(X,\bbZ/n)^{\Gmon}$ be a bounded below complex. \smallskip\\
i) The arrow of $D(X^0,\bbZ/n)$
\[
L^-_{X/S}\calA\overset{\simeq}{\longto} L^+_{X/S}\calA
\]
is an isomorphism. In particular, the complex $L^-_{X/S}\calA$ is bounded below. \smallskip\\
ii) For any morphism of schemes $f\co S'\to S$, the isomorphism in i) is compatible with base change along $f_*$ and $f^*$. If $f$ is locally of finite type, it is also compatible with $f_!$ and $f^!$.}\smallskip\\

Let us point out the following consequence of Theorem B. Let $S$ be the spectrum of a henselian discrete valuation ring with generic point $\eta$ and special point $s$. Fix a geometric point $\bar{\eta}\to \eta$. Then there is the functor of nearby cycles
\begin{equation}\label{intro5}
\begin{aligned}
\Psi_X\co D(X_\eta,\bbZ/n) & \longto D(X_s\times_S\eta,\bbZ/n)\\
\calA & \longmapsto \bar{i}^*\bar{j}_*\calA_{\bar{\eta}},
\end{aligned}
\end{equation}
where $D(X_s\times_S\eta,\bbZ/n)$ is as in [SGA 7, XIII] the derived category of $((X_{\bar{s}})_{\text{\'et}},\bbZ/n)$-modules with a continuous action of the Galois group compatible with its action on $X_{\bar{s}}$. The usual functorialities of nearby cycles give transformations from $D(X_\eta,\bbZ/n)$ to $D(X^0_s\times_S\eta,\bbZ/n)$ as follows
\begin{equation}\label{intro6}
\begin{aligned}
L^-_{X_{\bar{s}}/\bar{s}}\circ\Psi_X &\longleftarrow \Psi_{X^0}\circ L^-_{X_\eta/\eta}, \\
L^+_{X_{\bar{s}}/\bar{s}}\circ \Psi_X&\longto \Psi_{X^0}\circ L^+_{X_\eta/\eta}.
\end{aligned}
\end{equation}


\noindent {\bf Corollary.} {\it Let $S$ be the spectrum of a henselian discrete valuation ring, and let $X/S$ be an algebraic space of finite type with an \'etale locally linearizable $\bbG_{m}$-action. Then, for $\calA\in D(X_\eta,\bbZ/n)$ bounded below, there is a commutative diagram in $D(X_s^0\times_S\eta,\bbZ/n)$ 
\begin{equation}\label{intro6}
\begin{tikzpicture}[baseline=(current  bounding  box.center)]
\matrix(a)[matrix of math nodes, 
row sep=1.5em, column sep=2em, 
text height=1.5ex, text depth=0.45ex] 
{ L^-_{X_{\bar{s}}/\bar{s}}\circ\Psi_X(\calA)& \Psi_{X^0}\circ L^-_{X_\eta/\eta}(\calA)\\ 
L^+_{X_{\bar{s}}/\bar{s}}\circ \Psi_X(\calA)&\Psi_{X^0}\circ L^+_{X_\eta/\eta}(\calA),\\ }; 
\path[->](a-1-2) edge (a-1-1); 
\path[->](a-1-1) edge  (a-2-1); 
\path[->](a-2-1) edge (a-2-2);
\path[->](a-1-2) edge  (a-2-2);
\end{tikzpicture}
\end{equation}
and all arrows are isomorphisms if $\calA$ is $\bbG_m$-monodromic.}\smallskip\\

\subsection{Link to the literature and strategy of proof} \label{secintro2} The commutativity of hyperbolic localization with nearby cycles is a purely formal consequence of Theorem B. Hence, sufficient generality is of importance: there are no finiteness assumptions imposed neither on the base scheme $S$ (e.g. locally noetherian) nor on the sheaves (e.g. constructible). Let us link the results to the literature\footnote{The author includes what he knows, but the outline is probably not complete. The author is grateful for every comment, e.g. if the reader feels that formulations are incorrect or other work should be mentioned. Of course, other comments or questions are equally welcome.}.\smallskip\\
{\bf Theorem A.} 
The spaces $X^\pm$ in \eqref{intro1} are defined by Drinfeld \cite{Dr13}, and he proves representability of $X^\pm$ for quasi-separated algebraic spaces of finite type over fields. Note that similar functors are studied by Hesselink \cite{He80}, where he proves representability under the existence of a $\bbG_m$-invariant affine open cover, i.e. the $\bbG_m$-action is Zariski locally linearizable. By results of Sumihiro \cite[Cor. 2]{Sum74}, \cite[Cor. 3.11]{Sum75} every $\bbG_m$-action on a normal variety has this property. Note that this fails without normality assumption, e.g. for $\bbP^1$ with $0$ and $\infty$ identified. More recently, Alper-Hall-Rydh \cite[Thm. 2.24]{AHR15} recover Drinfeld's result on $X^\pm$ by proving general results on the representability of $\Hom$-stacks. The condition of being \'etale locally linearizable comes from their generalization of Sumihiro's result \cite[Thm. 2.5]{AHR15}: every $\bbG_m$-action on a quasi-separated algebraic space locally of finite presentation is \'etale locally linearizable. In [\emph{loc. cit.}] this is shown for algebraically closed fields as bases and in upcoming work of Alper-Hall-Rydh \cite{AHR16} this hypothesis is removed. Theorem A is straight forward:\smallskip\\
\phantom{h}\hspace{1cm}(1) Prove Theorem A for $S$-affine schemes. \smallskip\\
\phantom{h}\hspace{1cm}(2) Descend the representability and favorable properties by using an \\ 
\phantom{h(2)}\hspace{1cm} equivariant atlas $\{U_i\to X\}$. \smallskip\\
Note that if $X$ is a scheme where the $\bbG_m$-action is not Zariski locally linearizable, then for the argument of Theorem A we have to leave the world of schemes, i.e. the fact that $X^\pm$ is a scheme in this case follows a posteriori from $X^\pm$ being an algebraic space. Note that our method is very close to Alper-Hall-Rydh's arguments \cite[\S 5.12]{AHR15}. We choose to include Theorem A because it makes the present manuscript self contained, and because we think it is of interest in its own: the hypothesis of being \'etale locally linearizable can be verified by hand in many cases. \smallskip\\
{\bf Theorem B.}  Braden \cite{Br03} proves that for a normal variety $X$ over an algebraically closed field, the transformation \eqref{intro4} is an isomorphism on weakly $\bbG_m$-equivariant complexes. Using Sumihiro's theorem, he reduces to the case of an affine space with a linear $\bbG_m$-action, and then uses a contraction argument \cite[Lem. 6]{Br03}. In \cite{DG15}, Drinfeld-Gaitsgory extend Braden's result to the case of quasi-separated algebraic spaces locally of finite type over characteristic zero fields in the context of $D$-modules. Their argument uses a certain family $\tilde{X}\to \bbA^1$, which is shown to be representable in \cite{Dr13}, and a sufficiently good six functor formalism, e.g. existence of a dualizing complex. Their method applies to the \'etale topology using $\bbQ_\ell$-sheaves with constructible cohomologies over fields of characteristic $\not= \ell$ \cite[\S 0.4]{DG15}. In our approach, we follow Braden's original method:\smallskip\\
\phantom{h}\hspace{1cm}(1) Prove that Theorem B i) holds for affine spaces with a linear $\bbG_m$-action. \smallskip\\
\phantom{h}\hspace{1cm}(2) Reduce to case (1) using an equivariant atlas $\{U_i\to X\}_i$: pull back to $U_i$ and \\
\phantom{h(2)}\hspace{1.1cm}embed $U_i$ into an affine space with a linear $\bbG_m$-action. \smallskip\\
A careful analysis of Braden's argument shows that in the presence of torsion coefficients no finiteness assumptions neither on $S$ nor on the sheaves are necessary. Theorem B ii) is proven by a diagram chase. The isomorphisms in Theorem B ii) are due to strong symmetry properties implied by Theorem B i), e.g. the functor $f_*$ commutes with $L^-$, and hence it also commutes with $L^+$ on monodromic complexes. 

The commutation of hyperbolic localization with nearby cycles for monodromic complexes follows from follows from Theorem B ii). M.~Finkelberg pointed out to us that a similar result in the complex analytic setting was proven earlier by Nakajima \cite[Prop 5.4.1 (2)]{N17}.

\subsection{Structure of the manuscript}
In \S 1, we study spaces with an \'etale locally linearizable $\bbG_m$-action, and prove Theorem A, cf. Theorem \ref{repthm} below. Paragraph \S 2 is devoted to the proof of Theorem B i), cf. Theorem \ref{Bradenthm} below. The toy case is $\bbA^1_S$, cf. \S \ref{moncomplexes}, and the argument for affine spaces with a linear $\bbG_m$-action in \S \ref{secconlem} and \S \ref{linearactions} builds upon it. In \S \ref{compclosedsec} and \S \ref{competalesec}, we deduce Theorem B i) from the latter case using a $\bbG_m$-equivariant atlas. The functorial properties in Theorem B ii) are studied in \S \ref{bcforBraden}, and the commutation of hyperbolic localization with nearby cycles is deduced in \S \ref{nbhccommute}.

\subsection{Acknowledgements} The author thanks T. Haines and J. Heinloth for their support and many useful comments on an earlier version. He warmly thanks J. Alper for explanations on the generalization of Sumihiro's theorem, and M.~Finkelberg for pointing out the reference \cite{N17}. He thanks A. Bouthier, M. Land, T. Nikolaus, M. Rapoport, P. Scholze and G. Williamson for discussions around the subject, and the referee for reviewing the manuscript. Further, he thanks F. Hamm and J. Lemessa for their interest and enthusiasm in mathematics. This work would not have been finished without them.  

\subsection{Notation}
For a scheme $S$, we denote by $\sch{S}$ the category of $S$-schemes. By a space $X/S$, we mean an algebraic space $X/S$ in the sense of \cite[Tag 025Y]{StaPro}: a sheaf on the big $\text{fppf}$-site
\[
X\co \sch{S}^{\text{op}}_{\text{fppf}}\longto \text{Set}
\]
with representable diagonal and which admits a surjective \'etale map from a scheme. In particular, we do not assume $X$ to be quasi-separated. Throughout we fix a general base scheme $S$. Special hypothesis on $S$ are spelled out explicitly when needed. For two sheaves $X$ and $Y$ on $\sch{S}_{\text{fppf}}$, we denote by $X\times Y=X\times_S Y$ the fiber product and by $\Hom_S(Y,X)$ the set of $S$-morphisms. 

\section{Spaces with \'etale locally linearizable $\bbG_m$-action}\label{attract} 
\subsection{General nonsense}
Let $S$ be a scheme, and let $X/S$ be a space. For an $S$-scheme $T$, let $X_T=X\times T$. For another space $Y/S$, define the contravariant set-valued functor $\underline{\Hom}_S(Y,X)$ on $\sch{S}_\text{fppf}$, for any $S$-scheme $T$, by
\[
\underline{\Hom}_S(Y,X)\co T\longmapsto \Hom_T(Y_T,X_T),
\]
Note that $\Hom_T(Y_T,X_T)=\Hom_S(Y_T,X)$. The functor $\underline{\Hom}_S(Y,X)$ is a sheaf on $\sch{S}_{\text{fppf}}$. For a morphism $f\co X'\to X$ of $S$-spaces, there is a transformation as follows
\begin{equation}\label{funchom1}
\underline{\Hom}_S(Y,X')\longto \underline{\Hom}_S(Y,X), \;\; x\longmapsto f\circ x.
\end{equation}
For a morphism $g\co Y'\to Y$ of $S$-spaces, there is a transformation as follows
\begin{equation}\label{funchom2}
\underline{\Hom}_S(Y,X)\longto \underline{\Hom}_S(Y',X), \;\; x\longmapsto x\circ g.
\end{equation}
For a morphism $S'\to S$ of schemes, there is an isomorphism as follows 
\begin{equation}\label{bchom}
\underline{\Hom}_{S'}(Y_{S'},X_{S'}) \overset{\simeq}{\longto} \underline{\Hom}_S(Y,X)\times S',
\end{equation}
which is compatible with \eqref{funchom1} and \eqref{funchom2}. 

Let $G/S$ be a $\text{fppf}$-sheaf of groups. If $X/S$  and $Y/S$ are equipped with a (left) $G$-action, then $G$ acts on $\underline{\Hom}_S(Y,X)$: for any $S$-scheme $T$ and $(g,x)\in G(T)\times \underline{\Hom}_S(Y,X)(T)$ define $g*x$ by 
\[
g*x\defined g\circ x \circ g^{-1},
\]
where $g$ (resp. $g^{-1}$) denotes the automorphism $X_T\to X_T$ (resp. $Y_T\to Y_T$) given by the $G$-action on $X$ (resp. $Y$). Define the subfunctor $\underline{\Hom}^{G}_S(Y,X)$ of $G$-equivariant morphisms from $Y$ to $X$, for any $S$-scheme $T$ by
\[
\underline{\Hom}^{G}_S(Y, X)\colon T\longmapsto \{x\in \Hom_T(Y_T,X_T)\;|\;\forall g\in G(T):\;\;g*x=x\}. 
\]
In other words, $\underline{\Hom}^{G}_S(Y, X)$ is the subfunctor of $G$-fixed points in $\underline{\Hom}_S(Y, X)$. 

\begin{lem}\label{basiclem} Let $G/S$ be a $\text{fppf}$-sheaf of groups. Let $X/S$ and $Y/S$ be spaces with $G$-action. \smallskip\\
i) The functor $\underline{\Hom}^{G}_S(Y, X)$ is a subsheaf of $\underline{\Hom}_S(Y, X)$ on $\sch{S}_{\text{fppf}}$. \\
ii) For a $G$-equivariant morphism $X'\to X$ (resp. $Y'\to Y$) of $S$-spaces, the transformation \eqref{funchom1} (resp. \eqref{funchom2}) restricts to a morphism on subsheaves
\[ 
\underline{\Hom}^G_S(Y,X')\longto \underline{\Hom}^G_S(Y,X)   \;\;\;\;\;\text{(resp. $\underline{\Hom}^G_S(Y,X)\longto \underline{\Hom}^G_S(Y',X)$)}.
\]
iii) For a morphism $S'\to S$ of schemes, the isomorphism \eqref{bchom} restricts to an isomorphism of subsheaves
\[
\underline{\Hom}^{G}_{S'}(Y_{S'},X_{S'}) \overset{\simeq}{\longto} \underline{\Hom}^G_S(Y,X)\times S',
\]
which is compatible with the transformations constructed in ii).
\end{lem}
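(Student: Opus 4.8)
The plan is to package $G$-equivariance as an equalizer condition inside the sheaf $\calH\defined\underline{\Hom}_S(Y,X)$, after which all three assertions follow formally. Let $a_X\co G\times_S X\to X$ and $a_Y\co G\times_S Y\to Y$ be the two actions. For an $S$-scheme $T$ and $x\in\calH(T)$, the relation $g*x=g\circ x\circ g^{-1}=x$ says precisely that $x$ intertwines the $g$-actions on $Y_T$ and $X_T$; letting $g$ vary universally, this is the commutativity of $x\circ a_Y=a_X\circ(\id_G\times x)$ as morphisms $G_T\times_T Y_T\to X_T$. I would therefore introduce two morphisms of functors
\[
\al,\be\co \underline{\Hom}_S(Y,X)\longto \underline{\Hom}_S(G\times_S Y,X),
\]
with $\al(x)=x\circ a_Y$ (precomposition with $a_Y$, of the type \eqref{funchom2}) and $\be(x)=a_X\circ(\id_G\times x)$ (built from $a_X$); both are natural in $T$, and by construction $\underline{\Hom}^G_S(Y,X)=\Eq(\al,\be)$.

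For i) I would note that $\underline{\Hom}_S(Y,X)$ is a sheaf on $\sch{S}_{\text{fppf}}$ as recalled above, and that the same argument shows $\underline{\Hom}_S(G\times_S Y,X)$ is a sheaf (the first variable being a sheaf, not necessarily a space, is harmless). Since an equalizer of two maps of sheaves is again a sheaf and a subsheaf of the source, $\underline{\Hom}^G_S(Y,X)\subseteq \underline{\Hom}_S(Y,X)$ is a subsheaf. For ii), given a $G$-equivariant $f\co X'\to X$, I would use that the composite $f\circ x$ of two equivariant morphisms is again equivariant, i.e.\ \eqref{funchom1} carries the intertwining relation for $X'$ to that for $X$; hence it restricts to $\underline{\Hom}^G_S(Y,X')\to\underline{\Hom}^G_S(Y,X)$, and the case of a $G$-equivariant $Y'\to Y$ is identical, precomposing rather than postcomposing. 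For iii), the isomorphism \eqref{bchom} is compatible with \eqref{funchom1} and \eqref{funchom2} by the statement already recorded, and $a_X,a_Y$ base change to the actions on $X_{S'},Y_{S'}$; thus \eqref{bchom} intertwines $\al,\be$ over $S'$ with those over $S$. Since the base change $(-)\times S'$ commutes with equalizers of sheaves, \eqref{bchom} restricts to an isomorphism $\underline{\Hom}^G_{S'}(Y_{S'},X_{S'})\overset{\simeq}{\longto}\underline{\Hom}^G_S(Y,X)\times S'$, and compatibility with the maps of ii) is then automatic.

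The main obstacle is not a genuine difficulty but a single point requiring care: the literal condition ``$\forall g\in G(T)\co g*x=x$'' is not manifestly stable under base change in $T$ and hence does not obviously define a subfunctor. The remedy is to read equivariance universally, that is, as the equalizer condition $\al(x)=\be(x)$ of the first step; once this reformulation is in place, parts i)--iii) are formal consequences of the sheaf property of $\underline{\Hom}$ and of the compatibilities \eqref{funchom1}--\eqref{bchom} already established.
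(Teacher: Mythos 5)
Your proof is correct, and its core is the same as the paper's: the paper disposes of i) in a single sentence, observing that $\underline{\Hom}^G_S(Y,X)$ is the functor of fixed points for the $G$-action on the sheaf $\underline{\Hom}_S(Y,X)$, and declares ii) and iii) immediate. Your description $\underline{\Hom}^G_S(Y,X)=\Eq(\al,\be)$ with $\al,\be\co \underline{\Hom}_S(Y,X)\to \underline{\Hom}_S(G\times_S Y,X)$ is precisely that one-liner made explicit (the fixed-point condition as an equalizer inside a Hom-sheaf), and your deductions of ii) and iii) are the formal arguments the paper leaves to the reader; your parenthetical that the first variable $G\times_S Y$ need only be a sheaf is also fine, since $T\mapsto \Hom_T(Z_T,X_T)$ is a sheaf for arbitrary fppf sheaves $Z,X$. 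The one place you go beyond the paper is the closing caveat, and it is well taken: the displayed condition ``$\forall g\in G(T)\co g*x=x$'' is, read literally, not stable under base change in $T$, so it does not define a subpresheaf. For instance, take $S=T=\Spec(\bbQ)$, $G=\mu_3$ acting on $X=Y=\bbA^1_\bbQ$ by scaling; then $G(T)$ is trivial, so the translation $u\mapsto u+1$ satisfies the naive condition over $T$, but after base change to the field $\bbQ(\zeta)$ with $\zeta$ a primitive cube root of unity it is no longer fixed by $\zeta\in G$. Hence the quantifier must run over all $T'\to T$ (equivalently over the universal point of $G_T$), which is exactly what your equalizer encodes; this universal reading is what the paper's gloss ``subfunctor of $G$-fixed points'' and its appeal to the fixed-point functor (as in Fogarty, cited at Example \ref{twochoices}) tacitly presuppose, and it is the reading used throughout the rest of the paper. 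So: correct, essentially the paper's route, with one tacit point correctly identified and repaired.
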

\begin{proof}
For i), note that $\underline{\Hom}^{G}_S(Y, X)$ is the functor of fixed points with respect to the $G$-action on $\underline{\Hom}_S(Y, X)$, and hence a sheaf. Parts ii) and iii) are immediate.
\end{proof}

\begin{ex} \label{twochoices}
i) If $Y=S$, then $\underline{\Hom}^G_S(Y,X)=X^G$ is the functor of fixed points. If $X/S$ is a quasi-compact quasi-separated scheme and $G/S$ is a flat group scheme, then $X^G$ is representable by a closed subscheme of $X$, cf. Fogarty \cite{F73}. \\
ii) If $Y=G$ with the translation action, then $\underline{\Hom}^G_S(Y,X)=X$ by evaluating a morphism at the unit section of $G$.
\end{ex}

\subsection{Attractors, repellers and fixed points}
We are interested in the case where $G=\bbG_{m}$ is the multiplicative group viewed as a sheaf of groups on $\sch{S}_{\text{fppf}}$. Consider the following examples of schemes with $\bbG_m$-action:\smallskip\\
i) $Y=S$ equipped with the trivial $\bbG_{m}$-action;\smallskip\\
ii) $Y=(\bbA^1_S)^+$ where the underlying scheme is $\bbA^1_S$ equipped with the $\bbG_{m}$-action by dilations, i.e. for any $S$-scheme $T$ and $\la\in\bbG_{m,S}(T)=\calO_T^\times$, $x\in \bbA^1_S(T)=\calO_T$ the action is given by $(\la,x)\mapsto \la\cdot x$;\smallskip\\
iii) $Y=(\bbA^1_S)^-$ where the underlying scheme is $\bbA^1_S$ equipped with the opposite $\bbG_{m}$-action, i.e. the action is given by $(\la,x)\mapsto \la^{-1}\cdot x$.\smallskip\\

Drinfeld \cite{Dr13} introduces the following notations.

\begin{dfn}\label{Gmspaces}
Let $X/S$ be a space with $\bbG_{m}$-action. Define the sheaves $X^0$, $X^+$ and $X^-$ on $\sch{S}_{\text{fppf}}$ by
\begin{align*}
\hspace{1cm} X^0&\defined \underline{\Hom}^{\bbG_{m}}_S(S, X);\\
\hspace{1cm} X^+&\defined \underline{\Hom}^{\bbG_{m}}_S((\bbA_S^1)^+, X);\\
\hspace{1cm}X^-&\defined \underline{\Hom}^{\bbG_{m}}_S((\bbA^1_S)^-, X).
\end{align*}
The sheaf $X^0$ is called the \emph{space of fixed points}, $X^+$ the \emph{attractor} and $X^-$ the \emph{repeller}.
\end{dfn}

\begin{rmk}
i) The sheaf $X^0=X^{\bbG_{m}}$ is the functor of fixed points as in Remark \ref{twochoices}, i) above. In case ii) (resp. iii)), the sheaf $X^+$ (resp. $X^-$) is the functor of points \emph{floating to (resp. away from)} the fixed points. Informally speaking, the limit
\[
\underset{\la\to 0}{\lim}\;\la\cdot x \;\;\;\;\;\text{(resp.\;\;$\underset{\la\to \infty}{\lim}\;\la\cdot x$)}
\]
should exist.  \smallskip\\
ii) Note that $X^0$ and $X^\pm$ inherit $\bbG_m$-actions from $X$ (the trivial one on $X^0$), and with respect to these actions $(X^\pm)^0=X^0$. 
\end{rmk}

\begin{ex}\label{projspace} i) Let $X=\bbP^1_S$ with the natural $\bbG_{m}$-action. Then $X^0=\{0_S\}\amalg\{\infty_S\}$ are the fixed points, $X^+=(\bbA_S^1)^+\amalg \{\infty_S\}$ and $X^-=(\bbA^1_S)^-\amalg\{0_S\}$. In particular, $X^+$ and $X^-$ are representable. \smallskip \\
ii) Let $X=G$ be a $S$-group scheme with $\bbG_{m}$-action given by conjugation with a cocharacter $\la\co \bbG_{m,S}\to G$. Then $X^0=Z_G(\la)$ is the centralizer of $\la$ and $X^\pm=P(\pm\la)$ are the `\emph{parabolic}' subgroups defined by the dynamic method, cf. \cite[Thm. 4.1.7]{Co14}. 
\end{ex}

\begin{dfn}
Let $S$ be a scheme, and let $X/S$ be a space. A $\bbG_{m}$-action on $X$ is called \emph{\'etale locally linearizable} if there exists a $\bbG_{m}$-equivariant covering family
\begin{equation}\label{cover}
\{U_i\longto X\}_i,
\end{equation}
where $U_i$ are $S$-affine schemes with $\bbG_m$-action and the maps $U_i\to X$ are \'etale.
\end{dfn} 

\begin{rmk}\label{loclin}
i) That the family $\{U_i\longto X\}_i$ is covering means that the map $\coprod_iU_i\to X$ is surjective on the underlying topological spaces. \smallskip\\
ii) The attribute `\emph{linearizable}' refers to the fact that an affine scheme of finite presentation with $\bbG_m$-action can be (Zariski locally on the base) equivariantly embedded as a closed subscheme into some affine space on which $\bbG_m$-acts linearly, cf. Lemma \ref{embedlem} below. \smallskip\\
iii) If $S$ is the spectrum of an algebraically closed field, then every $\bbG_m$-action on a quasi-separated algebraic space $X/S$ locally of finite presentation is \'etale locally linearizable, cf. \cite[Thm. 2.5]{AHR15}. In forthcoming work of Alper-Hall-Rydh \cite{AHR16} \'etale locally linearizability is shown for an arbitrary base scheme $S$.  
\end{rmk}

\begin{thm}\label{repthm} Let $S$ be a scheme, and let $X/S$ be a space with an \'etale locally linearizable $\bbG_{m}$-action. Let $\{U_i\to X\}_i$ be a $S$-affine $\bbG_m$-equivariant \'etale covering family.\smallskip\\
i) The subfunctor $X^0$ of $X$ is representable by a closed subspace, and the induced family $\{U^0_i\to X^0\}_i$ is $S$-affine, \'etale and covering.\smallskip\\
ii) The functors $X^\pm$ are representable by algebraic spaces, and the induced family $\{U^\pm_i\to X^\pm\}_i$ is $S$-affine, \'etale, $\bbG_m$-equivariant and covering.\smallskip\\
iii) If $X/S$ is locally of finite presentation (resp. quasi-compact; resp. quasi-separated; resp. separated; resp. smooth; resp. is a scheme), so are $X^0$ and $X^\pm$.
\end{thm}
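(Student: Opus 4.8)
The plan is to follow the two-step strategy outlined in the introduction: first establish the theorem for $S$-affine schemes with $\bbG_m$-action, then descend along the equivariant \'etale atlas $\{U_i\to X\}_i$. For the affine case, I would exploit the linearization results of Remark~\ref{loclin}(ii): Zariski-locally on $S$, an $S$-affine scheme $U$ of finite presentation with $\bbG_m$-action embeds $\bbG_m$-equivariantly as a closed subscheme of some $\bbA^n_S$ with a linear action, for which the weight decomposition makes $X^0$, $X^+$, $X^-$ completely explicit (the fixed locus is the zero-weight coordinate subspace, the attractor the nonnegative-weight one, etc.). Using the closed-immersion compatibility of Lemma~\ref{basiclem}(ii) together with Example~\ref{twochoices}(i) (Fogarty's representability of fixed points by a closed subscheme), I would conclude representability of $U^0$ by a closed subscheme and of $U^\pm$ by $U^0$-affine schemes in the affine case, reducing the general affine-but-not-finitely-presented case by a standard limit argument.

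For the descent step, the key geometric input is that the functors $X^0$ and $X^\pm$ commute with the \'etale maps $U_i\to X$ in the sense that the induced squares
\begin{equation*}
\begin{tikzpicture}[baseline=(current bounding box.center)]
\matrix(a)[matrix of math nodes, row sep=1.5em, column sep=2.5em, text height=1.5ex, text depth=0.45ex]
{ U_i^\bullet & X^\bullet \\ U_i & X \\ };
\path[->](a-1-1) edge (a-1-2);
\path[->](a-1-1) edge (a-2-1);
\path[->](a-1-2) edge (a-2-2);
\path[->](a-2-1) edge (a-2-2);
\end{tikzpicture}
\end{equation*}
are Cartesian, where $\bullet\in\{0,+,-\}$. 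For $X^0$ this should follow from the formal fact that fixed points commute with \'etale base change (an \'etale $\bbG_m$-equivariant map is in particular unramified, so a fixed point of $U_i$ maps isomorphically onto its image and fixed points pull back); for $X^\pm$ the analogous statement is that a map $(\bbA^1_T)^\pm\to X$ lifts uniquely to $U_i$ once its restriction to the zero-section does, which is the unique-lifting property of \'etale morphisms applied along the retraction $(\bbA^1_T)^\pm\to T$ onto the fixed section. Granting these Cartesian squares, representability of $X^\bullet$ as an algebraic space follows because $\coprod_i U_i^\bullet \to X^\bullet$ is then an \'etale surjective map from a scheme, and $X^\bullet$ inherits a representable diagonal from $X$; the closedness of $X^0\hookto X$ and the $X^0$-affineness of $X^\pm$ descend since these are \'etale-local properties on the target.

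Part (iii) would follow formally once (i) and (ii) are in place: each listed property (locally of finite presentation, quasi-compact, quasi-separated, separated, smooth, being a scheme) is \'etale-local on the source or can be checked on the atlas, and the functors $X^\bullet$ are built from $X$ by operations (taking fixed points, forming $\underline{\Hom}$ out of $\bbA^1$) that preserve these properties in the affine model case. In particular smoothness of $X^\pm$ over $S$ in the affine linear model is visible from the explicit weight-space description, and being a scheme is established \emph{a posteriori} by noting that an algebraic space with an \'etale cover by affine schemes, which is moreover locally separated in the relevant cases, is a scheme.

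I expect the main obstacle to be establishing that the comparison squares above are genuinely Cartesian for $X^\pm$, rather than merely commutative — that is, proving the unique equivariant lifting of an attracting/repelling $\bbA^1$-family along an \'etale map. The subtlety is that $(\bbA^1_T)^\pm$ is not itself $\bbG_m$-fixed, so one cannot invoke the \'etale lifting criterion directly at a fixed point; instead I would lift along the formal or henselian neighborhood of the zero-section (where the action contracts everything to the fixed locus) and then argue that the lift is automatically $\bbG_m$-equivariant and extends uniquely over all of $\bbA^1$ by the separatedness of the \'etale map. Handling the case where $U_i\to X$ is only \'etale (not an open immersion) and $X$ is not quasi-separated is where the care of \cite{AHR16} and the algebraic-space formalism of \cite{StaPro} becomes essential.
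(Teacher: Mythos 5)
Your overall two-step strategy (affine case, then descent along the equivariant atlas) is the paper's strategy, but the key descent input is stated incorrectly, and the one genuinely hard lifting step is not proved. Your displayed Cartesian squares are right for $\bullet=0$ (this is the paper's Lemma~\ref{bcforzero}), but \emph{false} for $\bullet=\pm$: the square with bottom row $U_i\to X$ and vertical maps $p^\pm$ is commutative but not Cartesian. Concretely, take $X=\bbA^1_S$ with the standard action and $U=\bbG_{m,S}\subset X$ the open equivariant immersion: then $U^+=\emptyset$ (a graded map $\calO_S[x,x^{-1}]\to p_*\calO_T[t]$ must send $x^{-1}$, of degree $-1$, to $0$, contradicting $x\cdot x^{-1}=1$), whereas $U\times_X X^+=U=\bbG_{m,S}$ since $X^+=X$. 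So $p^\pm$ does not pull attractors back, and your subsequent deduction (descent of representability and of $X^0$-affineness along $\coprod_i U_i\to X$) does not run as written. The correct statement --- which your own prose about ``lifting once the restriction to the zero-section lifts'' secretly encodes --- is the paper's Lemma~\ref{bcforplus}: $U^\pm=U^0\times_{X^0}X^\pm$, a Cartesian square over $X^0$ via $q^\pm$; descent is then applied to the transformation $X^\pm\to X^0$ with the property \emph{affine}, which is why part i) (giving that $\{U_i^0\to X^0\}_i$ is an \'etale cover) must be proved first. Relatedly, your proof sketch of the unique lifting is incomplete at its crux: the \'etale lifting criterion gives a unique compatible lift over each infinitesimal neighbourhood $\bbA^1_{T,i}$ of the zero section, and separatedness gives at most \emph{uniqueness} of an extension to $\bbA^1_T$, never its \emph{existence}. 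Algebraizing the formal lift is exactly where the paper uses that $U$ is affine with a grading: an equivariant map $\colim_i\bbA^1_{T,i}\to U=\Spec_S(\calR)$ is a graded algebra map $\calR\to p_*\calO_T\pot{t}$ with $t$ of degree $1$, and since $\calR=\oplus_{i\in\bbZ}\calR_i$ every homogeneous element lands in a single piece $p_*\calO_T\cdot t^i$, so the map factors through $p_*\calO_T[t]$, i.e.\ extends to $\bbA^1_T$. A formal lift along an \'etale map does not algebraize in general; this one does only because of equivariance plus affineness, and that argument is missing from your proposal.

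Two further points. For the affine case the paper argues directly from the grading (Lemma~\ref{affinecase}): $U^0$ and $U^\pm$ are the closed subschemes cut out by the ideals generated by homogeneous elements of nonzero, resp.\ strictly negative/positive, degree --- no equivariant embedding, no Fogarty, and crucially no finite presentation hypothesis, which matters since the definition of \'etale local linearizability does not impose finite presentation on the $U_i$; your embedding-plus-limit route can be repaired by graded approximation but is strictly more work. In part iii), quasi-compactness, separatedness and being a scheme are \emph{not} \'etale-local on the source, and ``an algebraic space with an \'etale cover by affine schemes, locally separated, is a scheme'' is not a valid criterion; the paper instead deduces these properties from $X^0\hookto X$ being a closed immersion and $X^\pm\to X^0$ being affine (hence representable), so they transfer by composition. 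Finally, smoothness of $U^0$ and $U^\pm$ for a smooth affine $U$ is not ``visible from the weight-space description'' of an ambient linear model: the equivariant embedding $U\to\bbA^n_S$ is closed, so smoothness of $(\bbA^n_S)^\pm$ says nothing about $U^\pm$; the paper must invoke Margaux's theorem \cite{Mar15} at this point, and so would you.
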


This theorem, combined with Corollary \ref{attractorcor} below, implies Theorem A from the introduction. The proof of part i) is in \S\ref{spaceoffixedpoints} and of part ii) and iii) in  \S\ref{attractorandrepeller} below. The strategy is to descend the desired properties from the equivariant atlas. The keys are Lemmas \ref{bcforzero} and \ref{bcforplus}, cf. also \cite[Lem. 5.8]{AHR15} over fields. As it turns out $X^\pm$ is an affine $X^0$-space (Corollary \ref{attractorcor}) which implies part iii). Let us warm up with an easy case. 

\subsection{The affine case} \label{theaffinecase}
Let $X=\Spec_S(\calB)$ be an $S$-affine scheme where $\calB$ denotes a quasi-coherent $\calO_S$-algebra. If $S$ is connected, a $\bbG_{m}$-action on $X/S$ is equivalent to a $\bbZ$-grading 
\begin{equation}\label{grading}
\calB=\oplus_{i\in \bbZ}\calB_i
\end{equation}
on the $\calO_S$-algebra $\calB$, i.e. \eqref{grading} as quasi-coherent $\calO_S$-modules and $\calB_i\cdot \calB_j\subset \calB_{i+j}$. Let $\calI^+$ (resp. $\calI^-$, resp. $\calI^0$) be the quasi-coherent\footnote{The sheaf $\calI^+$ is quasi-coherent because it is the image of a quasi-coherent sheaf: $\calB\times (\oplus_{i<0}\calB_i)\to \calB$, $(b,c)\mapsto b\cdot c$.} ideal sheaf in $\calB$ generated by the homogeneous elements of strictly negative (resp. strictly positive, resp. non-zero) degree. 

\begin{lem}\label{affinecase}
i) The functor $X^0$ is representable by the closed subscheme of $X$ defined by $\calI^0$. \\
ii) The functor $X^\pm$ is representable by the closed subscheme of $X$ defined by $\calI^\pm$.   
\end{lem}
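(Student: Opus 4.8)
The plan is to compute everything on the level of functors of points and to reduce to the most concrete situation possible. Since $X^0$ and $X^{\pm}$ are fppf sheaves by Lemma \ref{basiclem} i), and since the candidate representing objects---the closed subschemes $V(\calI^0)$ and $V(\calI^{\pm})$ cut out by the respective ideals---are themselves $S$-affine schemes, it suffices to produce bijections on $T$-points that are natural in $T$, and for this it is enough to test on affine $S$-schemes $T=\Spec(A)$. Moreover the formation of $X^0$, $X^{\pm}$ commutes with base change (Lemma \ref{basiclem} iii)) and the ideals $\calI^0,\calI^{\pm}$ restrict compatibly, so the comparison may be checked Zariski-locally on $S$; hence it is harmless to assume $S=\Spec(R)$ affine, in which case $\calB=\tilde{B}$ for a $\bbZ$-graded $R$-algebra $B=\oplus_{i\in\bbZ}B_i$, the grading being the weight decomposition of the coaction $\mu^{*}\co B\to B\otimes_R R[t,t^{-1}]$, $b\mapsto b\otimes t^{i}$ for $b\in B_i$. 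A $T$-point of $X$ is then an $R$-algebra homomorphism $B\to A$.

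For part i), a $T$-point of $X^0=\underline{\Hom}^{\bbG_m}_S(S,X)$ is an $A$-algebra homomorphism $\phi\co B\otimes_R A\to A$ that is equivariant for the trivial $\bbG_m$-action on the source $\Spec(A)$. Writing out equivariance via comorphisms, for homogeneous $b$ of degree $i$ one obtains $\phi(b)\otimes t^{i}=\phi(b)\otimes 1$ in $A\otimes_R R[t,t^{-1}]$, which forces $\phi(b)=0$ whenever $i\neq 0$. Thus $\phi$ kills every homogeneous element of nonzero degree, i.e.\ factors through $(B/\calI^0)\otimes_R A$, and conversely every such homomorphism is equivariant. This gives a natural bijection $X^0(A)\cong\Hom_R(B/\calI^0,A)$, so $X^0$ is represented by the closed subscheme $V(\calI^0)$.

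For part ii), I would describe $(\bbA^1_A)^{+}=\Spec(A[s])$ with $s$ homogeneous of weight $1$, so that $A[s]=\oplus_{n\geq 0}A\,s^{n}$ lives in nonnegative degrees, while $(\bbA^1_A)^{-}$ puts $s$ in weight $-1$. A $T$-point of $X^{+}$ is then a \emph{graded} $A$-algebra homomorphism $\psi\co B\otimes_R A\to A[s]$. Since the target has no part in negative degree, $\psi$ must annihilate all homogeneous elements of negative degree and therefore factors through $(B/\calI^{+})\otimes_R A$; on the latter it is determined by the $A$-linear coefficient maps $(B/\calI^{+})_i\otimes_R A\to A\,s^{i}\cong A$. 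Reading off these coefficients---equivalently, evaluating $\psi$ at the unit section, i.e.\ applying $p^{+}$---yields an algebra homomorphism $\phi\co B/\calI^{+}\to A$, and conversely $\phi$ recovers $\psi$ by the rule $\psi(b)=\phi(\bar b)\,s^{\deg b}$ on homogeneous elements. This is a natural bijection $X^{+}(A)\cong\Hom_R(B/\calI^{+},A)$, exhibiting $X^{+}\cong V(\calI^{+})$ as a closed subscheme of $X$ with closed immersion $p^{+}$; the case of $X^{-}$ and $\calI^{-}$ is identical after reversing the sign of all weights.

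The routine reductions aside, the one step that needs genuine care is the correspondence in part ii) between graded homomorphisms $\psi\co B\otimes_R A\to A[s]$ and ordinary homomorphisms $\phi\co B/\calI^{+}\to A$. I expect the main bookkeeping to be checking that the assignment $\psi(b)=\phi(\bar b)\,s^{\deg b}$ is multiplicative---this uses that $\calI^{+}$ is a homogeneous ideal, so that $B/\calI^{+}$ is graded in degrees $\geq 0$ and multiplication respects the grading---and that coefficient extraction is inverse to it and natural in $A$. Once this bijection is in hand, the identification of $X^{\pm}$ with closed subschemes of $X$ (and not merely with abstract affine schemes) is automatic, since the comparison map is literally the evaluation $p^{\pm}$ at the unit section.
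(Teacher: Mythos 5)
Your proof is correct and follows essentially the same route as the paper: both identify $\bbG_m$-equivariant $T$-points of $X^0$ (resp.\ $X^{\pm}$) with $\bbZ$-graded algebra homomorphisms into $\calO_T$ placed in degree $0$ (resp.\ into $\calO_T[t]$ with $t$ of degree $\pm 1$), which visibly factor through the quotient by $\calI^0$ (resp.\ $\calI^{\pm}$). Your version merely makes explicit what the paper leaves implicit---the reduction to affine $S$ and $T$, and the inverse bijection $\psi(b)=\phi(\bar b)\,s^{\deg b}$---and both additions are sound.
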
  
\begin{proof}
Let $p\co T\to S$ be a scheme. Since $X$ is $S$-affine the set $X(T)$ identifies with set of $\calO_S$-algebra morphisms $\calB\to p_*\calO_T$, and $X^0(T)$ is the subset of $\bbZ$-graded $\calO_S$-algebra morphisms $\calB\to p_*\calO_T$, where $p_*\calO_T$ is in degree $0$. This implies part i). Likewise, the set $X^+(T)$ (resp. $X^-(T)$) identifies with the set of $\bbZ$-graded $\calO_S$-algebra morphisms $\calB\to p_*\calO_T[t]$ where the parameter $t$ has degree $1$ (resp. $-1$). This implies part ii). 
\end{proof}

\subsection{The space of fixed points $X^0$} \label{spaceoffixedpoints}
It is suprising that $X^0\subset X$ is closed, even if $X$ is not separated. This is closely related to the connectedness of $\bbG_m$. Let us prepare for the proof.

\begin{lem}\label{bcforzero}
Let $U\to X$ be $\bbG_m$-equivariant \'etale $S$-morphism. Then as functors
\[
U^0= U\times_XX^0.
\]
\end{lem}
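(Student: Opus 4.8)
The plan is to prove the two inclusions of subfunctors of $U$ separately. Write $f\co U\to X$ for the given map, and for a $T$-point $u\in U(T)$ introduce the two maps $\alpha_u,\beta_u\co \bbG_{m,T}\to U$, where $\alpha_u$ is the orbit map $\lambda\mapsto \lambda\cdot u$ and $\beta_u$ is the constant map $\lambda\mapsto u$. By construction $u\in U^0(T)$ if and only if $\alpha_u=\beta_u$. Since $X^0\hookrightarrow X$ is the subsheaf of fixed points (Lemma \ref{basiclem}), the fiber product satisfies $(U\times_XX^0)(T)=\{u\in U(T) : f\circ u\in X^0(T)\}$, and by the same reformulation $f\circ u\in X^0(T)$ amounts to $f\circ\alpha_u=f\circ\beta_u$. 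The inclusion $U^0\subseteq U\times_XX^0$ is then formal: applying the $\bbG_m$-equivariant map $f$ to $\alpha_u=\beta_u$ yields $f\circ\alpha_u=f\circ\beta_u$. It remains to prove the converse, i.e. that $f\circ\alpha_u=f\circ\beta_u$ forces $\alpha_u=\beta_u$; this is where the hypotheses on $f$ and the geometry of $\bbG_m$ are needed.

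The key step is to study the equalizer $W\defined \Eq(\alpha_u,\beta_u)\subseteq \bbG_{m,T}$. Because $f\circ\alpha_u=f\circ\beta_u$, the pair $(\alpha_u,\beta_u)$ factors through $U\times_XU$, and $W$ is precisely the preimage of the relative diagonal $\Delta_{U/X}\co U\to U\times_XU$ under $(\alpha_u,\beta_u)$. As $f$ is \'etale, hence unramified, the diagonal $\Delta_{U/X}$ is an open immersion (for morphisms of algebraic spaces this is the standard characterization of unramifiedness, which is the place where we must take care that $U$ and $X$ are only spaces). Consequently $W$ is an \emph{open} subscheme of $\bbG_{m,T}$. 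On the other hand, evaluating on $T$-schemes gives $W(T')=\{\lambda\in\bbG_{m,T}(T') : \lambda\cdot u_{T'}=u_{T'}\}$, which is the stabilizer of $u$ and hence a subgroup scheme of $\bbG_{m,T}$.

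Thus $W$ is an open subgroup scheme of $\bbG_{m,T}$, and here the connectedness of $\bbG_m$ enters, exactly as announced. Over each point $t\in T$ the fiber $W_t$ is an open subgroup scheme of $\bbG_{m,\kappa(t)}$; its complement in $\bbG_{m,\kappa(t)}$ is a union of cosets of $W_t$, hence open, while $W_t$ itself is nonempty (it contains the unit). Since $\bbG_{m,\kappa(t)}$ is connected, the complement must be empty, so $W_t=\bbG_{m,\kappa(t)}$. Therefore the open immersion $W\hookrightarrow\bbG_{m,T}$ is surjective on underlying spaces, hence an isomorphism, so $W=\bbG_{m,T}$ and $\alpha_u=\beta_u$. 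This shows $u\in U^0(T)$ and completes the converse inclusion, giving $U^0=U\times_XX^0$.

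The genuine content — and what I expect to be the main obstacle — is the converse inclusion, whose proof splits into translating the \'etale hypothesis into openness of $W$ and recognizing $W$ as a subgroup scheme so that connectedness of the fibers of $\bbG_m$ can be applied. The subtle point requiring attention is the step ``unramified $\Rightarrow$ $\Delta_{U/X}$ open immersion'' in the setting of algebraic spaces, together with the passage from the fiberwise statement to the global equality $W=\bbG_{m,T}$; everything else is a direct unwinding of the definitions of $\alpha_u$, $\beta_u$, and the fixed-point subfunctor.
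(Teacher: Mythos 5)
Your proof is correct, but it takes a genuinely different route from the paper's. The paper argues by \'etale localization on $T$: it reduces to the case where $T$ is the spectrum of a strictly henselian local ring, where equivariance of the lift $\tilde f$ follows from connectedness of $\bbG_m$ and uniqueness of lifts along the \'etale map $U\to X$, and then spreads this out using that $U\to X$ is locally of finite presentation, via $\Hom_X^{\bbG_m}(\lim_i T_i,U)=\colim_i\Hom_X^{\bbG_m}(T_i,U)$ for cofiltered limits of affines. You instead work with the universal point: the equalizer $W=\Eq(\alpha_u,\beta_u)\subseteq\bbG_{m,T}$ is the pullback of the diagonal $\Delta_{U/X}$, which is an open immersion since \'etale morphisms of algebraic spaces are unramified, and $W$ is moreover the stabilizer subgroup functor of $u$; an open subgroup scheme of $\bbG_{m,\kappa(t)}$ over each point $t\in T$ is closed (its complement is a union of open translates) and nonempty, hence all of $\bbG_{m,\kappa(t)}$ by connectedness, so the open immersion $W\hookto\bbG_{m,T}$ is surjective and thus an isomorphism. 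Your approach buys two things: it avoids the strict henselization and the limit/approximation step entirely (no use of finite presentation of $U\to X$ beyond what is packaged into \'etaleness --- in fact your argument only uses that $f$ is unramified, so it proves a slightly stronger statement), and it isolates the role of connectedness of $\bbG_m$ in a single classical fact about open subgroup schemes over a field. The paper's d\'evissage, on the other hand, is the pattern reused elsewhere in \S\ref{spaceoffixedpoints}--\S\ref{attractorandrepeller}. Two cosmetic points: the coset argument in the fiber should be carried out after base change to an algebraic closure of $\kappa(t)$ (not every point of $\bbG_{m,\kappa(t)}$ is rational), openness, closedness and surjectivity being checkable there; and surjectivity of $|W|\to|\bbG_{m,T}|$ follows because every point of $\bbG_{m,T}$ lies in some fiber $\bbG_{m,\kappa(t)}=W_t$. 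Both are standard and do not affect correctness.
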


\begin{proof} Let $T/S$ be a scheme. An element $\varphi\in (U\times_XX^0)(T)$ corresponds to a commutative diagram
\[
\begin{tikzpicture}[baseline=(current  bounding  box.center)]
\matrix(a)[matrix of math nodes, 
row sep=1.5em, column sep=2em, 
text height=1.5ex, text depth=0.45ex] 
{&U \\ 
T&X \\}; 
\path[->](a-2-1) edge node[above] {$\tilde{f}$} (a-1-2);
\path[->](a-1-2) edge node[right] {\text{\'et}} (a-2-2);
\path[->](a-2-1) edge node[below] {$f$} (a-2-2);
\end{tikzpicture}
\]
where $f$ is $\bbG_m$-equivariant. We have to show that $\tilde{f}$ is $\bbG_m$-equivariant. It is enough to show equivariance \'etale locally. If $T$ is the spectrum of a strictly henselian local ring, then $\tilde{f}$ is 
$\bbG_{m}$-equivariant (because $\bbG_m$ is connected and $U\to X$ is \'etale). Since $U\to X$ is locally of finite presentation, we get
\[
\Hom_X^{\bbG_m}(\lim_iT_i,U)=\colim_i\Hom_X^{\bbG_m}(T_i,U)
\]
for any cofiltered limit $\lim_iT_i$ of affine schemes in $\sch{X}_{\text{fppf}}$. Hence, $\tilde{f}$ is \'etale locally $\bbG_m$-equivariant. The lemma follows. 
\end{proof}

\begin{proof}[Proof of Theorem \ref{repthm} i)]
Let us show that $X^0$ is representable by a closed subspace, and $\{U_i^0\to X^0\}_i$ is $S$-affine, \'etale and covering. By Lemma \ref{affinecase} i), the $U_i^0$ are closed subschemes of $U_i$, hence $S$-affine. Lemma \ref{bcforzero} shows that the following commutative diagram
\begin{equation}\label{bcrepzero}
\begin{tikzpicture}[baseline=(current  bounding  box.center)]
\matrix(a)[matrix of math nodes, 
row sep=1.5em, column sep=2em, 
text height=1.5ex, text depth=0.45ex] 
{\coprod_iU_i^0&X^0 \\ 
\coprod_iU_i&X \\}; 
\path[->](a-1-1) edge (a-1-2);
\path[->](a-1-1) edge (a-2-1);
\path[->](a-1-2) edge (a-2-2);
\path[->](a-2-1) edge (a-2-2);
\end{tikzpicture}
\end{equation}
is cartesian. Now the representability of $X^0$ follows from \cite[Tag 03I2]{StaPro} applied to the transformation $X^0\to X$ and the property `\emph{closed immersion}': (i) being a closed immersion is stable under base change, $\text{fppf}$-local on the base and closed immersions satisfy $\text{fppf}$-descent (because they are affine); (ii) $X^0$ is a sheaf; (iii) $X$ is an algebraic space; (iv) the bottom arrow in \eqref{bcrepzero} is surjective and \'etale, and $\coprod_iU_i^0$ is representable; (v) the left vertical arrow in \eqref{bcrepzero} is a closed immersion. This implies that $X^0$ is an algebraic space and $X^0\to X$ is a closed immersion. This proves Theorem \ref{repthm} i).
\end{proof}

\subsection{Attractors and repellers $X^\pm$}\label{attractorandrepeller}
For an \'etale locally linearizable $\bbG_{m}$-action the representability of $X^\pm$ is proven similarly. Let us explain the argument.\\
Note that under the morphism $\bbG_m\to \bbG_m$, $\la\mapsto \la^{-1}$ the notions of $X^+$ and $X^-$ are interchanged. Hence, it is enough to prove representability of $X^+$. Let us denote $(\bbA^1_S)^+$ by $\bbA^1_S$ in this subsection.\\
The zero section $S\to \bbA^1_S$ is $\bbG_{m}$-equivariant and defines by functoriality a morphism $X^+\to X^0$.

\begin{lem} \label{bcforplus}
Let $U\to X$ be a $\bbG_{m}$-equivariant \'etale $S$-morphism where $U$ is an $S$-affine scheme. Then as functors
\[
U^+=U^0\times_{X^0}X^+.
\]
\end{lem}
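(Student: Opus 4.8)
The plan is to exhibit a natural transformation $U^+\to U^0\times_{X^0}X^+$ and show it is an isomorphism of fppf-sheaves by checking bijectivity on $T$-points for every $S$-scheme $T$ (both sides being sheaves by Lemma \ref{basiclem}). An element of $U^+(T)$ is a $\bbG_m$-equivariant map $\tilde\psi\co \bbA^1_T\to U$; restricting it along the zero section $T\hookrightarrow \bbA^1_T$ gives an element of $U^0(T)$, and composing with the structure map $U\to X$ gives $\psi\in X^+(T)$. These two data agree in $X^0(T)$, which defines the transformation; the point is to invert it. So, starting from a pair $(\phi_0,\psi)$ with $\phi_0\in U^0(T)$, $\psi\in X^+(T)$ and $(U\to X)\circ\phi_0=\psi|_{T}$ (the restriction of $\psi$ to the zero section), I must produce a unique equivariant lift $\tilde\psi\co \bbA^1_T\to U$ of $\psi$ through the \'etale map $U\to X$ extending $\phi_0$.

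Here $\bbA^1_T$ means $(\bbA^1_T)^+$, as in the surrounding text. I would phrase the lifting problem via the pullback $V\defined \bbA^1_T\times_{\psi,X}U$, so that $\pi\co V\to \bbA^1_T$ is \'etale and $\bbG_m$-equivariant, and $\phi_0$ furnishes a section $s_0$ of $\pi$ over the zero section whose image lies in the fixed points $V^0$. A lift $\tilde\psi$ is exactly a section of $\pi$, and an equivariant lift extending $\phi_0$ is exactly an equivariant section $s$ of $\pi$ with $s$ restricting to $s_0$ over the origin. The whole lemma thus reduces to a contraction statement: an \'etale $\bbG_m$-equivariant morphism over the contracting base $\bbA^1_T$ admits a unique equivariant section through a prescribed fixed point lying over the origin. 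Uniqueness is formal: two such sections agree on the locus where they coincide, which is open because $\pi$ is unramified (open diagonal) and closed because $\pi$ is separated, hence is an open-and-closed $\bbG_m$-stable subscheme of $\bbA^1_T$ containing the zero section, and therefore all of $\bbA^1_T$ since the origin lies in the closure of every $\bbG_m$-orbit.

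For existence I would build the section infinitesimally and then algebraize. Since $U\to X$ is \'etale, hence formally \'etale, the section $s_0$ lifts uniquely and compatibly over each nilpotent thickening $\Spec\calO_T[t]/t^{n+1}$ of the zero section, yielding a formal section; by the uniqueness just established (applied over each thickening) every such lift is automatically $\bbG_m$-equivariant. The main obstacle is passing from this formal section to an honest section over all of $\bbA^1_T$, and this is exactly where the $\bbG_m$-action is essential. Using that $U$ is $S$-affine, say $U=\Spec_S(\calC)$ with grading $\calC=\oplus_i\calC_i$ coming from the action (Lemma \ref{affinecase}), the infinitesimal lifts assemble into a graded $\calO_S$-algebra homomorphism, and equivariance forces $\calC_i$ to land in the single graded piece $\calO_T\cdot t^i$ of $\calO_T[t]/t^{n+1}$; hence each generator has a one-term, automatically polynomial, expansion, and the limit over $n$ defines a genuine graded homomorphism $\calC\to p_*\calO_T[t]$, i.e.\ an equivariant $\tilde\psi\in U^+(T)$. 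By construction $\tilde\psi$ restricts to $\phi_0$ on the zero section and lifts $\psi$ to all finite orders; since the infinitesimal lifts were chosen as lifts of $\psi$, the map $\tilde\psi$ factors through $V$, so $(U\to X)\circ\tilde\psi=\psi$, and uniqueness identifies this $\tilde\psi$ as the inverse. The subtlety to flag is precisely the globalization step: without equivariance a formal section over $\Spf\calO_T[[t]]$ need not algebraize, and it is the grading that collapses the completion $\prod_i\calO_T\,t^i$ back to the polynomial ring $\oplus_i\calO_T\,t^i$.
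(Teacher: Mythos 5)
Your proposal is correct and takes essentially the same route as the paper's proof: the paper also reduces to lifting uniquely over the infinitesimal neighbourhoods $\bbA^1_{T,i}$ of the zero section using that $U\to X$ is \'etale, and then uses the grading $\calR=\oplus_i\calR_i$ on the affine $U$ to show that the resulting graded map to $p_*\calO_T\pot{t}$ factors through the polynomial subalgebra $p_*\calO_T[t]$, which is exactly your algebraization step \eqref{formalnbh}. The extra details you make explicit (the open-and-closed equalizer argument for uniqueness of equivariant sections, and the automatic equivariance of the infinitesimal lifts) are compressed in the paper into the phrase ``there is a unique $\bbG_m$-equivariant lift $\colim_i\bbA^1_{T,i}\to U$ over $X$,'' so they constitute a more careful writeup of the same argument rather than a different method.
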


\begin{proof}
Let $p\co T\to S$ be a scheme, and let $\varphi\in (U^0\times_{X^0}X^+)(T)$. The element $\varphi$ corresponds to a commutative diagram of $\bbG_{m}$-equivariant morphisms
\[
\begin{tikzpicture}[baseline=(current  bounding  box.center)]
\matrix(a)[matrix of math nodes, 
row sep=1.5em, column sep=2em, 
text height=1.5ex, text depth=0.45ex] 
{T&U \\ 
\bbA^1_T&X \\}; 
\path[->](a-1-1) edge (a-1-2);
\path[->](a-1-1) edge (a-2-1);
\path[->](a-1-2) edge (a-2-2);
\path[->](a-2-1) edge (a-2-2);
\end{tikzpicture}
\] 
where $T\to \bbA^1_T$ is the zero section. Let us construct a unique $\bbG_m$-equivariant lift $\bbA^1_T\to U$ over $X$. For $i\geq 0$ denote $\bbA^1_{T,i}=\Spec_T(\calO_T[t]/t^{i+1})$ the $i$-th infinitesimal neighbourhood of the zero section $T\to \bbA^1_T$. Since $U\to X$ is \'etale, there is a unique $\bbG_m$-equivariant lift $\colim_i\bbA^1_{T,i}\to U$ over $X$. We claim that
\begin{equation}\label{formalnbh}
\Hom_S^{\bbG_m}(\colim_i\bbA^1_{T,i},U)=\Hom_S^{\bbG_m}(\bbA^1_T,U).
\end{equation}
Let $U=\Spec_S(\calR)$ for a quasi-coherent $\calO_S$-algebra $\calR$. Assume that $S$ is connected, and let $\calR=\oplus_{i\in \bbZ}\calR_i$ be the grading corresponding to the $\bbG_m$-action. An element of the left hand side of \eqref{formalnbh} corresponds to a morphism of $\calO_S$-algebras
\begin{equation} \label{pothom}
\calR\longto p_*\calO_T\pot{t}
\end{equation}
compatible with $\bbZ$-gradings (the parameter $t$ has degree $1$). As $\calR=\oplus_{i\in \bbZ}\calR_i$ each morphism \eqref{pothom} factors through the subalgebra $\oplus_{i\geq 0}p_*\calO_T\cdot t^i=p_*\calO_T[t]$ of $p_*\calO_T\pot{t}$, i.e. defines a $\bbZ$-graded $\calO_S$-algebra morphism $\calR\to p_*\calO_T[t]$. This proves \eqref{formalnbh} and implies the lemma.
\end{proof}

\begin{proof}[Proof of Theorem \ref{repthm} ii).]
We show that $X^+$ is an algebraic space, and that the family $\{U_i^+\to X^+\}$ is $S$-affine, \'etale and covering. Note that $\bbG_m$-equivariance follows from functoriality. By Lemma \ref{affinecase}, the sheaf $U_i^+$ is representable by a closed subscheme of $U_i$. Lemma \ref{bcforplus} shows that the following commutative diagram of sheaves
\begin{equation}\label{bcrep}
\begin{tikzpicture}[baseline=(current  bounding  box.center)]
\matrix(a)[matrix of math nodes, 
row sep=1.5em, column sep=2em, 
text height=1.5ex, text depth=0.45ex] 
{\coprod_iU_i^+&X^+ \\ 
\coprod_iU_i^0&X^0 \\}; 
\path[->](a-1-1) edge (a-1-2);
\path[->](a-1-1) edge (a-2-1);
\path[->](a-1-2) edge (a-2-2);
\path[->](a-2-1) edge (a-2-2);
\end{tikzpicture}
\end{equation}
is cartesian where $X^+\to X^0$ (resp. $U_i^+\to U_i^0$) is induced by the zero section $S\to \bbA^1_S$. By Theorem \ref{repthm} i), the family $\{U_i^0\to X^0\}_i$ is \'etale and covering. Now the representability of $X^+$ follows from \cite[TAG 03I2]{StaPro} applied to the transformation $X^+\to X^0$ and the property `\emph{affine}': (i) being affine is stable under base change, $\text{fppf}$-local on the base and affine morphisms satisfy $\text{fppf}$-descent; (ii) $X^+$ is a sheaf; (iii) $X^0$ is an algebraic space by Theorem \ref{repthm} i); (iv) the bottom arrow in \eqref{bcrep} is surjective and \'etale, and $\coprod_iU_i^+$ is representable; (v) the left vertical arrow in \eqref{bcrep} is affine. This implies that $X^+$ is an algebraic space and $X^+\to X^0$ is affine. This proves Theorem \ref{repthm} ii). 
\end{proof}

\begin{cor} \label{attractorcor}
The map $X^+\to X^0$ induced by the zero section $S\to \bbA^1_S$ is affine, has geometrically connected fibers and induces a bijection on the sets of connected components $\pi_0(|X^+|)\simeq \pi_0(|X^0|)$ of the underlying topological spaces.
\end{cor}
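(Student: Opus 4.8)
The morphism $q^+\co X^+\to X^0$ is affine by Theorem \ref{repthm} ii), so that assertion is already in hand and it remains to treat the fibres and the statement on $\pi_0$. The plan is to first record a section of $q^+$. The structure map $\pi\co (\bbA^1_S)^+\to S$ is $\bbG_m$-equivariant for the trivial action on $S$, and $\pi\circ z=\id_S$ where $z\co S\to(\bbA^1_S)^+$ is the zero section. Precomposition therefore produces a $\bbG_m$-equivariant morphism $s\co X^0\to X^+$ with $q^+\circ s=\id_{X^0}$; concretely $s$ sends a fixed point to the constant map at that point, and $q^+$ evaluates at $z$. In particular $q^+$ is surjective on underlying topological spaces, and $s$ is a splitting of it.

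For the geometric connectedness of the fibres of $q^+$ I would reduce to the affine case. This property may be checked after the surjective \'etale base change $\coprod_iU_i^0\to X^0$: every geometric point of $X^0$ lifts through this cover, and by the cartesian square \eqref{bcrep} the pullback of $q^+$ along $U_i^0\to X^0$ is the map $U_i^+\to U_i^0$, which has the same geometric fibres. Thus it suffices to treat a single $S$-affine $U=\Spec_S(\calR)$ with grading $\calR=\oplus_{i\in\bbZ}\calR_i$ (on a connected component of $S$). By Lemma \ref{affinecase} the algebra $\calC\defined \calR/\calI^+=\oplus_{i\geq 0}\calC_i$ is concentrated in non-negative degrees, $U^+=\Spec_S(\calC)$ and $U^0=\Spec_S(\calC_0)$, the map $q^+$ corresponding to the inclusion $\calC_0\hookrightarrow\calC$ of the degree-zero part. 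A geometric fibre of $q^+$ is then $\Spec(R)$ for a non-negatively graded ring $R=\oplus_{i\geq 0}R_i$ with $R_0=\bar{k}$ a field. The point is that such an $R$ has no idempotents besides $0,1$: writing an idempotent $e=\sum_{i\geq 0}e_i$ and comparing degree-$0$ parts of $e^2=e$ gives $e_0^2=e_0$, so $e_0\in\{0,1\}$; if some homogeneous part of positive degree were nonzero, choose $d\geq 1$ minimal with $e_d\neq 0$, and comparing degree-$d$ parts yields $e_d=2e_0e_d$, whence $e_d=0$ for either value of $e_0$, a contradiction. Hence each geometric fibre is connected.

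Finally I would deduce the statement on connected components purely topologically from the section $s$ and the connectedness of fibres. The relation $q^+\circ s=\id$ gives $\pi_0(q^+)\circ\pi_0(s)=\id$, so $\pi_0(s)$ is injective and $\pi_0(q^+)$ surjective. To see that $\pi_0(s)$ is surjective, let $C\subseteq|X^+|$ be a connected component, pick $y\in C$ and set $z\defined q^+(y)\in|X^0|$. The set-theoretic fibre $(q^+)^{-1}(z)$ is the continuous image of the connected geometric fibre over $z$, hence connected; it contains both $y$ and $s(z)$, and being connected and containing $y\in C$ it lies in $C$, so $s(z)\in C$. Thus every component of $|X^+|$ meets $s(|X^0|)$, so $\pi_0(s)$ is bijective and $\pi_0(q^+)=\pi_0(s)^{-1}$ is the asserted bijection $\pi_0(|X^+|)\simeq\pi_0(|X^0|)$.

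The main obstacle is the affine connectedness computation of the second paragraph: the descent along the equivariant atlas and the final $\pi_0$ argument are formal, whereas the connectedness of the spectrum of a non-negatively graded ring with field degree-zero part is the one step where the $\bbG_m$-contraction genuinely enters, expressing that the attractor $X^+$ retracts onto the fixed locus $X^0$.
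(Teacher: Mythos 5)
Your proposal is correct, and its key step runs along a genuinely different track from the paper's. The paper proves connectedness of the fibre $X^+_x$ over a field-valued point $x\co\Spec(K)\to X^0$ directly, without passing to the atlas: the $\bbG_m$-action on $X^+$ extends canonically to an action of the multiplicative monoid $\bbA^1$ (precompose a $\bbG_m$-equivariant map $\bbA^1_T\to X$ with the multiplication of $\bbA^1$), so each point $y\co\Spec(L)\to X^+_x$ has an orbit map $h\co\bbA^1_L\to X^+_x$ with $h(1)=y$ and $h(0)=x_L$ (this uses $(X^+_x)^0=x$), and the connected sets $|h|(|\bbA^1_L|)$ tie every point of the fibre to the section. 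You instead descend to the affine case along the equivariant atlas --- legitimately, since by Lemma \ref{bcforplus} the square \eqref{bcrep} is cartesian, so the geometric fibres of $U_i^+\to U_i^0$ are literally those of $q^+$ --- and then run an idempotent computation in a non-negatively graded ring $R=\oplus_{i\geq 0}R_i$ with $R_0=\bar{k}$. That computation is correct in all characteristics (for either value of $e_0$ the minimal-degree relation $e_d=2e_0e_d$ forces $e_d=0$), and your identifications $U^+=\Spec_S(\calC)$ and $U^0=\Spec_S(\calC_0)$, with $q^+$ corresponding to $\calC_0\hookrightarrow\calC$, do follow from Lemma \ref{affinecase}: one checks $\calI^0\cap\calR_0=\calI^+\cap\calR_0$, so the degree-zero part of $\calC$ computes $U^0$. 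Comparing the two: the paper's monoid argument is intrinsic, applies verbatim to the fibre (a priori only an algebraic space) without any atlas, and displays the geometric content --- $X^+$ contracts onto $X^0$ along $\bbA^1$; your route replaces this with elementary graded-ring algebra at the price of the (routine, and here available) reduction to the affine case, and as a byproduct yields connectedness of the fibre over an arbitrary field, not merely over algebraically closed ones. As you observe yourself, the non-negative grading of the fibre is exactly the algebraic shadow of the same contraction. Your closing $\pi_0$ argument spells out what the paper compresses into ``the assertion on connected components follows from the existence of a section'' and is sound; the one point worth flagging is that identifying the set-theoretic fibre $(q^+)^{-1}(z)$ with the continuous image of $|X^+\times_{X^0}\Spec(K)|$, for a representative $\Spec(K)\to X^0$ of $z$, uses surjectivity of points of fibre products onto fibres, which does hold for algebraic spaces.
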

\begin{proof} Affineness of $X^+\to X^0$ is proven above, and we show that the fibers are connected. Let $K$ be a field, and let $x\co \Spec(K)\to X^0$ be a point. Denote $X^+_x=X^+\times_{X^0, x}\Spec(K)$. We claim that its underlying topological space $|X^+_x|$ is connected, cf. \cite[Algebraic Spaces, \S 4]{StaPro} for the definition of underlying topological spaces. Let $y\co \Spec(L)\to X^+_x$ be a point, and denote by $x_L$ the composition $\Spec(L)\to \Spec(K)\overset{x}{\to} X^0$. Then $x_L$ and $x$ define the same point of $|X^+_x|$. The key observation is that the $\bbG_m$-action on $X^+$ extends to an action of the multiplicative monoid $\bbA^1$ in the obvious way. Hence, the $\bbA^1$-orbit of $y$ defines a map $h\co \bbA^1_L\to X^+_x$ with $h(1)=y$ and $h(0)=x_L$ because $(X^+_x)^0=x$, cf. Lemma \ref{affinecase} i). Thus, the points defined by $y$ and $x$ lie in the connected set $|h|(|\bbA^1_L|)$. Since $y$ was arbitrary, this shows that $|X^+_x|$ is connected. In particular, the continuous map $|X^+|\to |X^0|$ has connected fibers, and the assertion on connected components follows from the existence of a section $|X^0|\subset |X^+|$. This shows the corollary.  
\end{proof}

\begin{proof}[Proof of Theorem \ref{repthm} iii).]
Let us check the list of properties. If $X$ is locally of finite presentation, then it is immediate from the definition that $X^0$ and $X^+$ are locally of finite presentation, i.e. the functors commute with cofiltered limits of affine schemes in $\sch{S}_{\text{fppf}}$. Let $\calP$ be one of the following properties: quasi-compact, quasi-separated, separated, being a scheme. If $X$ has property $\calP$, so has $X^0$ (because $X^0\subset X$ is closed). Since $X^+\to X^0$ is affine, in particular representable, quasi-compact and (quasi-)separated, and each property is stable under composition, it follows that $X^+$ has property $\calP$, if $X^0$ has. Now let $X/S$ be smooth. We claim that the maps $X^0\to S$ and $X^+\to X^0$ are smooth (hence is $X^+\to S$). As smoothness may be checked \'etale locally, diagram \eqref{bcrep} reduces us to the case where $X/S$ is affine. Then the claim is the main result of Margaux \cite[Thm 1.1]{Mar15}, cf. also Remark 1.2 and Remark 3.3. of [\emph{loc.\! cit.}]. This finishes the proof of Theorem \ref{repthm} iii). 
\end{proof}

\subsection{The hyperbolic localization diagram}
Let us relate the spaces $X^0$ and $X^{\pm}$ to each other. The structure morphism $(\bbA^1_S)^{\pm}\to S$ is $\bbG_{m}$-equivariant which defines by functoriality, cf. Lemma \ref{basiclem} ii), a transformation
\[
i^{\pm}\co X^0\longto X^{\pm}.  
\] 
Further, the zero section $S\to (\bbA^1_S)^{\pm}$ is $\bbG_{m}$-equivariant which defines, again by functoriality, a transformation\footnote{Informally speaking, a point $x\in X^\pm$ maps to its limit $\lim_{\la\to 0}\la\cdot x$ (resp. $\lim_{\la\to\infty}\la\cdot x$).}
\[
q^{\pm}\co X^{\pm}\longto X^{0},
\]
such that $q^\pm\circ i^\pm=\id$. Likewise, the inclusion $\bbG_{m,S}\to (\bbA^1_S)^{\pm}$ is $\bbG_{m}$-equivariant which defines, by Lemma \ref{basiclem} ii) and Example \ref{twochoices} ii), a transformation
\[
p^{\pm}\co X^{\pm}\longto X,
\]
such that $p^+\circ i^+=p^{-}\circ i^{-}$ is the inclusion of the subfunctor $X^0\subset X$.

\begin{ex}
Let $X=\bbP^1_S$ as in Example \ref{projspace}. Then $i^{\pm}\co \{0_S\}\amalg \{\infty_S\}\to X^{\pm}$ is the inclusion and in particular closed. The morphism $q^{\pm}:X^{\pm}\to \{0_S\}\amalg \{\infty_S\}$ is given by contracting the $\bbA^1$-components of $X^{\pm}$. The morphism $p^{\pm}\co X^{\pm}\to \bbP^1_S$ is the inclusion and in particular a monomorphism (but not locally closed). 
\end{ex}

\begin{dfn} \label{hyperbolicloc} Let $X/S$ be a space with a $\bbG_{m}$-action. The commutative diagram $\HypLoc{X}$ of transformations
 \[
\begin{tikzpicture}[baseline=(current  bounding  box.center)]
\matrix(a)[matrix of math nodes, 
row sep=2em, column sep=2em, 
text height=1.5ex, text depth=0.45ex] 
{X^0&&&\\
&X^+\times_{X}X^-&X^-& X^0\\ 
 &X^+&X &  \\
 &X^0,& & \\}; 
\path[->](a-1-1) edge node[above] {$j$} (a-2-2);  
\path[->](a-2-2) edge node[below] {$'p^-$} (a-2-3); 
\path[->](a-2-2) edge node[right] {$'p^+$} (a-3-2); 
\path[->](a-1-1) edge[bend left=20] node[above] {$i^-$} (a-2-3); 
\path[->](a-1-1) edge[bend right=20] node[left] {$i^+$} (a-3-2); 
\path[->](a-3-2) edge node[below] {$p^+$} (a-3-3);
\path[->](a-2-3) edge node[above] {$q^-$} (a-2-4);
\path[->](a-3-2) edge node[left] {$q^+$} (a-4-2);
\path[->](a-2-3) edge node[right] {$p^-$} (a-3-3);
\end{tikzpicture}
\]
is called the hyperbolic localization diagram.
\end{dfn}

\begin{rmk} 
In view of the explicit description in Lemma \ref{affinecase}, the map $j$ is an isomorphism if $X$ is $S$-affine. In general, $X^+\times_{X}X^-$ is strictly bigger, e.g. for $X=\bbP^1_S$. See Proposition \ref{morprop} iii) below for the basic property of the map.
\end{rmk}

In view of Theorem \ref{repthm} and the definitions, we obtain the following corollary.

\begin{cor}\label{bccor}
Let $S$ be a scheme, and let $X/S$ be a space with an \'etale locally linearizable $\bbG_{m}$-action. For a morphism of schemes $S'\to S$, the induced $\bbG_m$-action on $X'=X_{S'}$ is again \'etale locally linearizable. The transformation constructed in Lemma \ref{basiclem} iii) defines a $\bbG_{m}$-equivariant isomorphism of commutative diagrams of $S'$-spaces
\[
\HypLoc{X'}\overset{\simeq}{\longto} \HypLoc{X}\times S'.
\]
\end{cor}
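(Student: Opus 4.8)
The plan is to assemble the isomorphism $\HypLoc{X'} \overset{\simeq}{\to} \HypLoc{X}\times S'$ componentwise from the base-change isomorphisms supplied by Lemma \ref{basiclem} iii), and then verify that these component isomorphisms are compatible with all the structure maps $i^{\pm}$, $q^{\pm}$, $p^{\pm}$, $j$ appearing in the diagram. First I would observe that the preliminary claim---that the induced $\bbG_m$-action on $X' = X_{S'}$ is again \'etale locally linearizable---follows directly from base change: if $\{U_i \to X\}_i$ is a $\bbG_m$-equivariant \'etale $S$-affine covering family, then $\{U_{i,S'} \to X'\}_i$ is again such a family, since being \'etale, $S$-affine, $\bbG_m$-equivariant, and surjective on underlying topological spaces are all preserved under the base change $S' \to S$.

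The core of the argument is that the formation of $X^0$ and $X^{\pm}$ commutes with base change. This is precisely the content of Lemma \ref{basiclem} iii) applied to the three choices of target $Y \in \{S, (\bbA^1_S)^+, (\bbA^1_S)^-\}$ from Definition \ref{Gmspaces}: since $Y_{S'} = (S')$, $((\bbA^1_{S'})^+)$, $((\bbA^1_{S'})^-)$ respectively, we obtain $\bbG_m$-equivariant isomorphisms
\[
(X')^0 \overset{\simeq}{\longto} X^0 \times S', \qquad (X')^{\pm} \overset{\simeq}{\longto} X^{\pm}\times S'.
\]
Because fiber products commute with base change, this also yields $(X')^+ \times_{X'} (X')^- \overset{\simeq}{\to} (X^+ \times_X X^-)\times S'$. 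So all four types of node in the diagram $\HypLoc{X'}$ match the corresponding nodes of $\HypLoc{X}\times S'$ via these canonical isomorphisms.

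The remaining---and essentially only nontrivial---task is checking that these node-by-node isomorphisms commute with the arrows of the diagram, i.e. that the whole collection assembles into an isomorphism of commutative diagrams. Here I would use the second half of Lemma \ref{basiclem} iii): the base-change isomorphism \eqref{bchom} is stated to be compatible with the functorialities \eqref{funchom1} and \eqref{funchom2}. Every arrow in $\HypLoc{X}$ is constructed by functoriality in the second variable of $\underline{\Hom}^{\bbG_m}_S(-, X)$ from a $\bbG_m$-equivariant morphism of the schemes $S$, $(\bbA^1_S)^{\pm}$, $\bbG_{m,S}$ (namely the zero section, the structure map, and the unit inclusion), together with the map $j$ into the fiber product; thus compatibility of \eqref{bchom} with \eqref{funchom2} guarantees that each such arrow base-changes correctly, and $j$ is compatible because it is the canonical map to a fiber product, whose formation commutes with base change. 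The main (and only) obstacle is the essentially bookkeeping verification that the $\bbG_m$-equivariant source morphisms defining $i^{\pm}, q^{\pm}, p^{\pm}$ base-change to the analogous morphisms over $S'$---for instance that the zero section $S' \to (\bbA^1_{S'})^{\pm}$ is the base change of $S \to (\bbA^1_S)^{\pm}$---which is immediate from the definitions of these standard morphisms. Since all arrows are induced by functoriality from base-changeable data and all nodes carry compatible base-change isomorphisms, the resulting transformation is an isomorphism of commutative diagrams, and it is $\bbG_m$-equivariant because each component isomorphism is, completing the proof.
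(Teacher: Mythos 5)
Your proof is correct and follows the same route the paper intends: the paper states Corollary \ref{bccor} with essentially no written proof ("In view of Theorem \ref{repthm} and the definitions"), relying exactly on the base-change isomorphism of Lemma \ref{basiclem} iii) applied to $Y\in\{S,(\bbA^1_S)^\pm,\bbG_{m,S}\}$, its stated compatibility with the functorialities \eqref{funchom1} and \eqref{funchom2}, and the evident preservation of \'etale local linearizability under base change. Your write-up merely makes these bookkeeping steps explicit, which is a faithful expansion of the paper's argument.
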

\hfill\ensuremath{\Box} 

Let us mention some basic properties of the morphisms appearing in $\HypLoc{X}$.

\begin{prop} \label{morprop} Let $S$ be a scheme, and let $X/S$ be a space with an \'etale locally linearizable $\bbG_{m}$-action. The morphism of $S$-spaces\smallskip\\
i) $i^\pm\co X^0\to X^\pm$ is a closed immersion;\smallskip\\
ii) $q^{\pm}\co X^\pm\to X^0$ is affine, has geometrically connected fibers and induces a bijection on connected components;\smallskip\\
iii) $j=(i^+,i^-)\co X^0\to X^+\times_XX^-$ is an open and closed immersion. 
\end{prop}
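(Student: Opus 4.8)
The plan is to prove each of the three statements by descent along the equivariant atlas $\{U_i\to X\}_i$, reducing in every case to the $S$-affine situation, where everything is controlled by the explicit gradings of Lemma \ref{affinecase}. The recurring principle is that each property to be verified --- closed immersion, affineness, open-and-closed immersion --- is \'etale-local on the target and stable under the relevant base change, so it suffices to check it after pulling back along the \'etale covering $\coprod_i U_i^0\to X^0$ (for i), $\coprod_i U_i^0\to X^0$ again (for ii), or $\coprod_i U_i\to X$ (for iii)). The cartesian diagrams \eqref{bcrepzero} and \eqref{bcrep}, together with Lemmas \ref{bcforzero} and \ref{bcforplus}, are precisely the tools that let these pullbacks be computed fiberwise on the atlas.

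For part i), I would start from the morphism $i^\pm\co X^0\to X^\pm$ induced by the structure map $(\bbA^1_S)^\pm\to S$. On an affine chart $U=\Spec_S(\calR)$ with grading $\calR=\oplus_i\calR_i$, the map $U^0\to U^\pm$ corresponds under Lemma \ref{affinecase} to the surjection $\calR/\calI^\pm\onto \calR/\calI^0$ of graded quotients, which exhibits $U^0$ as a closed subscheme of $U^\pm$ (indeed $\calI^\pm\subset\calI^0$). Since being a closed immersion is \'etale-local on the target and the family $\{U_i^\pm\to X^\pm\}_i$ is \'etale and covering by Theorem \ref{repthm} ii), the closed immersion descends to $i^\pm\co X^0\to X^\pm$. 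Part ii) is essentially a restatement of Corollary \ref{attractorcor}: affineness of $q^\pm$ is already established in the proof of Theorem \ref{repthm} ii), and the statements on connected fibers and the bijection on $\pi_0$ are exactly what Corollary \ref{attractorcor} provides (applied to $X^-$ via the inversion $\la\mapsto\la^{-1}$ that swaps $X^+$ and $X^-$), so I would simply cite it.

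The main obstacle is part iii), the claim that $j=(i^+,i^-)\co X^0\to X^+\times_X X^-$ is an open \emph{and} closed immersion. Closedness follows from i), since $j$ is a section of the separated (indeed affine) projection $X^+\times_X X^-\to X^\pm$ composed with $i^\pm$, or more directly by descent as above. The delicate point is openness: on an affine chart, $X^+\times_X X^-$ is cut out by the graded ideal generated by $\calI^+$ and $\calI^-$, and one must check that the zero-degree locus $X^0$ is \emph{open} inside this fiber product, not merely closed. The key computation is that over an affine $U$ the map $j$ is in fact an isomorphism $U^0\overset{\simeq}{\to} U^+\times_U U^-$ --- this is the content of the Remark following Definition \ref{hyperbolicloc}, and it holds because a morphism $\calR\to p_*\calO_T[t]$ that is simultaneously a limit under both $\bbG_m$-actions must land in degree zero. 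Thus $j$ becomes an isomorphism after the \'etale base change $\coprod_i U_i\to X$; since being an open immersion (equivalently, an \'etale monomorphism) is \'etale-local on the target and descends, and $\coprod_i U_i^0=\coprod_i(U_i^+\times_{U_i}U_i^-)\to X^+\times_X X^-$ is the corresponding \'etale cover, openness of $j$ follows. I would verify carefully that the atlas $\{U_i^+\times_{U_i}U_i^-\to X^+\times_X X^-\}_i$ is genuinely \'etale and covering --- this is where the interplay between Lemmas \ref{bcforzero} and \ref{bcforplus} and the formation of the fiber product $X^+\times_X X^-$ must be handled with care, since the fiber product is taken over $X$ rather than over $X^0$.
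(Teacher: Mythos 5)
Parts i) and ii) of your proposal are sound. For i), the chart computation (noting $\calI^\pm\subset\calI^0$, so $U^0\hookrightarrow U^\pm$ is closed) together with descent along the \'etale covering family $\{U_i^\pm\to X^\pm\}_i$ works, provided you make explicit that the relevant squares are cartesian: apply Lemma \ref{bcforzero} to the \'etale equivariant maps $U_i^\pm\to X^\pm$ and use $(U_i^\pm)^0=U_i^0$, $(X^\pm)^0=X^0$ to get $U_i^0=U_i^\pm\times_{X^\pm}X^0$. The paper is more economical here, deducing i) in one line from the fact that $p^\pm\circ i^\pm$ is the closed immersion $X^0\subset X$. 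Part ii) is indeed just a citation of Corollary \ref{attractorcor}, exactly as in the paper. One misstatement in iii): the projection $X^+\times_XX^-\to X^+$ is a base change of $p^-\co X^-\to X$, which is neither affine nor separated in general (it is a monomorphism only when $X$ is separated, and for $X=\bbP^2$ with a suitable linear action $p^-$ contains the non-affine inclusion of the complement of a point), so your first justification of closedness of $j$ does not stand; your fallback via descent is fine, or more directly: since $X^0\to X$ is a monomorphism one has $X^0=X^0\times_XX^0$, so $j$ is identified with $i^+\times_Xi^-$, a fiber product of the closed immersions from part i).

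The genuine gap is in your proof of openness in iii), at precisely the point you flagged for verification: the family $\{U_i^+\times_{U_i}U_i^-\to X^+\times_XX^-\}_i$ is \'etale but \emph{not} covering. By Lemma \ref{affinecase} its total space is $\coprod_iU_i^0$, so its image is exactly $j(X^0)$, which misses everything whenever $X^+\times_XX^-$ is strictly bigger than $X^0$. Concretely, for $X=\bbP^1_S$ as in Example \ref{projspace} with the atlas $U_1=\bbP^1_S\setminus\{\infty_S\}$, $U_2=\bbP^1_S\setminus\{0_S\}$, one gets $U_1^+\times_{U_1}U_1^-=\{0_S\}$ and $U_2^+\times_{U_2}U_2^-=\{\infty_S\}$, while $X^+\times_XX^-=X^0\amalg\bbG_{m,S}$: the $\bbG_m$-part is never covered, so no descent conclusion about $j$ can be drawn there. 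Relatedly, your claim that ``$j$ becomes an isomorphism after the \'etale base change $\coprod_iU_i\to X$'' is false, because attractors do not pull back along $p^\pm$: in general $X^\pm\times_{X,p^\pm}U\not=U^\pm$ (for $U_2$ above, $X^+\times_XU_2=\bbG_{m,S}\amalg\{\infty_S\}$ whereas $U_2^+=\{\infty_S\}$); the correct base-change statements are Lemmas \ref{bcforzero} and \ref{bcforplus}, which identify pullbacks over $X^0$, not over $X$. The paper's proof repairs exactly this: it uses the genuinely covering \'etale family $\coprod_{i,j}(U_i^+\times_XU_j^-)\to X^+\times_XX^-$ (fiber products over $X$, with two independent indices; \'etale because $U_i^\pm\to X^\pm$ are), computes the pullback of $j$ via Lemma \ref{bcforzero}, and reduces to showing that $U_i^0\to U_i^+\times_XU_i^-$ is \'etale. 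This map factors as your isomorphism $U_i^0\simeq U_i^+\times_{U_i}U_i^-$ followed by $U_i^+\times_{U_i}U_i^-\to U_i^+\times_XU_i^-$, which is a base change of the \'etale diagonal $U_i\to U_i\times_XU_i$. So \'etale-locally $j$ is \'etale, not an isomorphism: the gap between the fiber product over $U_i$ (where your isomorphism lives) and the fiber product over $X$ (the actual target of $j$) is measured by the diagonal of the atlas, and that is the ingredient your argument is missing. Combining \'etaleness with closedness (a closed immersion is in particular a monomorphism, and an \'etale monomorphism is an open immersion) then gives that $j$ is an open and closed immersion.
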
 

\begin{rmk} Part iii) is \cite[Prop. 1.6.2]{Dr13} over fields.
\end{rmk}

\begin{proof} Part i) follows from the fact that $p^+\circ i^+=p^{-}\circ i^{-}$ is the inclusion of the closed subspace $X^0\subset X$. Part ii) is Corollary \ref{attractorcor}. Consider part iii). Clearly, $j$ is a closed immersion, and we show that it is also \'etale. Let $\{U_i\to X\}_i$ be a $S$-affine $\bbG_m$-equivariant \'etale covering family. Lemma \ref{bcforzero} shows that the commutative diagram of $S$-spaces
\begin{equation}\label{bcopen}
\begin{tikzpicture}[baseline=(current  bounding  box.center)]
\matrix(a)[matrix of math nodes, 
row sep=1.5em, column sep=2em, 
text height=1.5ex, text depth=0.45ex] 
{\coprod_iU_i^0&X^0 \\ 
\coprod_{i,j}(U_i^+\times_{X}U_j^-)&X^+\times_XX^- \\}; 
\path[->](a-1-1) edge (a-1-2);
\path[->](a-1-1) edge (a-2-1);
\path[->](a-1-2) edge (a-2-2);
\path[->](a-2-1) edge (a-2-2);
\end{tikzpicture}
\end{equation}
is cartesian (apply the lemma to $U_i^\pm \to X^\pm$ and use $(X^\pm)^0=X^0$). The bottom arrow in \eqref{bcopen} is \'etale and surjective. By descent it is enough to show that $U^0_i\to U_i^+\times_XU_i^-$ is \'etale. Since $U_i\to X$ is \'etale, the diagonal $U_i\to U_i\times_XU_i$ is \'etale, and hence the morphism $U_i^+\times_{U_i}U_i^-\to U_i^+\times_XU_i^-$ obtained by base change is \'etale. Now the explicit description in Lemma \ref{affinecase} shows that $U_i^0=U_i^+\times_{U_i}U_i^-$. The proposition follows.
\end{proof}

\begin{rmk} i) If $X/S$ is separated, then $p^\pm\co X^{\pm}\to X$ is a monomorphism. In general this fails, e.g. for the affine line with double origin. \smallskip\\
ii) It is suprising that $j$ is an open immersion even for non-normal schemes, e.g. if $X$ is $\bbP^1$ with $0$ and $\infty$ identified. Then $X^0=\{*\}$ but $X^\pm\not= X$ as one might guess. Indeed, consider the $\bbG_{m}$-equivariant projection $\bbP^1\to X$. It induces an isomorphism $(\bbA^1)^\pm\subset (\bbP^1)^\pm\to X^\pm$, and one gets $X^+\times_XX^-=\bbG_{m}\amalg \{*\}$. 
\end{rmk}

\section{Hyperbolic localization in families}\label{hlfsec}
Let $S$ be a scheme, and let $X/S$ be a space. Let $\La=\bbZ/n$ with $n>1$ invertible on $S$. We denote by $D(X,\La)$ the unbounded derived category of $(X_{\text{\'et}},\La)$-modules, where $X_{\text{\'et}}$ denotes the \'etale topos associated with $X$. If $f\co Y\to X$ is a morphism of $S$-spaces, there are the Grothendieck operations
\begin{align*}
f^*\co D(X,\La)&\to D(Y,\La),\;\;\;\; f_*\co D(Y,\La)\to D(X,\La),\\
\str\otimes_X\str\co &D(X,\La)\times D(X,\La)\longto D(X,\La),\\
\Hom_X\co &D(X,\La)^{\text{op}}\times D(X,\La)\longto D(X,\La),
\end{align*}
where $(f^*,f_*)$ and $(\str\otimes_X\calA,\Hom_X(\calA,\str))$ for every $\calA\in D(X,\La)$ are pairs of adjoint functors. If $f\co Y\to X$ is locally of finite type, there is another pair $(f_!,f^!)$ of adjoint functors
\[
f_!\co D(Y,\La)\to D(X,\La),\;\;\;\; f^!\co D(X,\La)\to D(Y,\La).
\]
These operations satisfy the usual properties: base change, projection and K\"unneth formula, trace map and Poincar\'e duality, cf. the work of Liu-Zheng \cite{LZ12} for all statements in full generality and the references cited there. \\
Further, if $Z\overset{i}{\to} X\overset{j}{\leftarrow} X\bslash Z$ is a closed immersion with open complement, then for any $\calA\in D(X,\La)$ there are distinguished triangles 
\begin{align*}
j_!j^*\calA&\longto \calA\longto i_*i^*\calA\longto   \\
i_*i^!\calA & \longto \calA \longto j_*j^*\calA\longto,
\end{align*}
and we have $i^*j_!=0=i^!j_*$, $i^*i_*\simeq \id\simeq i^!i_*$ and $j^*j_!\simeq \id\simeq j^*j_*$, cf. \cite[TAG 0A4L]{StaPro} for the second triangle.


\subsection{Construction of $L^-_{X/S}\to L^+_{X/S}$}
Let $X/S$ be a space locally of finite presentation with an \'etale locally linearizable $\bbG_{m}$-action. By Theorem \ref{repthm} the hyperbolic localization diagram $\HypLoc{X}$ in \eqref{hyperbolicloc} is a commutative diagram of $S$-spaces locally of finite presentation. Let us define two functors by pull-push along the maps
\[
\begin{tikzpicture}[baseline=(current  bounding  box.center)]
\matrix(a)[matrix of math nodes, 
row sep=1.3em, column sep=2em, 
text height=1.5ex, text depth=0.45ex] 
{ & X^{\pm}&\\ 
X^0&&X.\\ }; 
\path[->](a-1-2) edge node[above] {$q^{\pm}$\;}(a-2-1); 
\path[->](a-1-2) edge node[above] {\;\;\;\;$p^{\pm}$} (a-2-3); 
\end{tikzpicture}
\]

\begin{dfn} \label{hyplocfun} The functors $L_{X/S}^{\pm}\co D(X,\La)\to D(X^0,\La)$ are defined as
\begin{align*}
L_{X/S}^+\defined (q^+)_{!}\circ (p^+)^*\\
L_{X/S}^-\defined (q^-)_*\circ (p^-)^!.\\
\end{align*}
\end{dfn}

\begin{construction} Braden \cite{Br03} (cf. also \cite{DG15}) constructs a transformation of functors 
\begin{equation}\label{locmorph}
L_{X/S}^-\longto L_{X/S}^+
\end{equation}
as follows. By Proposition \ref{morprop} iii), the morphism 
\begin{equation}\label{hyperdis}
j\defined (i^+,i^-)\co X^0 \longto X^+\times_X X^-
\end{equation}
is an open and closed immersion, hence $(j^!,j_*)$ are adjoint. Recall the notation of the maps in the definition of \HypLoc{X}, cf. Definition \ref{hyperbolicloc}. Applying the functor $(i^-)^* (p^-)^!$ to the unit of the adjunction $\id\to (p^+)_{*}(p^+)^*$, we get 
\begin{equation}\label{compD1}
\begin{aligned}
(i^-)^{*} (p^-)^!\longto\;\; &(i^-)^* (p^-)^!(p^+)_{*}(p^+)^*\\
\simeq\;\; &(i^-)^* ('p^-)_{ *} ('p^+)^! (p^+)^* && \;\;\;\;\;(\text{base change})\\
\longto\;\; &(i^-)^* ('p^-)_{ *} j_*j^! ('p^+)^! (p^+)^* &&\;\;\;\;\;((j^!,j_*)\text{-adjunction})\\
\simeq\;\; &(i^+)^! (p^+)^*. && \;\;\;\;\;(('p^\pm)\circ j =i^\pm\;\text{and}\; (i^-)^*(i^-)_*\simeq \id)
\end{aligned}
\end{equation}
Now precompose (resp. compose) \eqref{compD1} with the transformation
\begin{equation}\label{contrafos} 
(q^-)_*\longto (i^-)^* \;\;\;\;\text{(resp. $(i^+)^!\longto (q^+)_!$.)}
\end{equation}
obtained from the unit $\id\to (i^-)_*(i^-)^*$ (resp. the counit $(i^+)_{!}(i^+)^!\to \id$) by applying $(q^-)_*$ (resp. $(q^+)_{!}$) and using $q^\pm\circ i^\pm=\id$. This constructs \eqref{locmorph}.
\end{construction}

\subsection{Monodromic complexes} Let $a,p\co \bbG_{m,S}\times X\to X$ be the action (resp. projection) map. We call a complex $\calA\in D(X,\La)$ \emph{(naively) $\bbG_m$-equivariant} if there exists an isomorphism $a^*\calA\simeq p^*\calA$ in $D(\bbG_{m,S}\times X,\La)$.

\begin{dfn} \label{mondfn}
Let $D(X,\La)^{\Gmon}$ be the full subcategory of $D(X,\La)$ strongly generated\footnote{Strongly generated means generated by a finite iteration of taking the cone of a morphism in $D(X,\La)$.} by $\bbG_m$-equivariant complexes. The objects of $D(X,\La)^{\Gmon}$ are called \emph{$\bbG_m$-monodromic complexes}.
\end{dfn}

\begin{rmk} Note that Definition \ref{mondfn} differs from Drinfeld-Gaitsgory's definition of $\bbG_m$-mono-dromic complexes. Let us recall their definition. The quotient $\calX=[\bbG_m\bslash X]$ is an Artin stack over $S$, and we denote by $D(\calX,\La)$ the unbounded derived category of $(\calX_{\lisset},\La)$-modules, where $\calX_\lisset$-denotes the lisse-\'etale topos associated with $\calX$. Let $D_{\text{cart}}(\calX,\La)$ be the full subcategory of $D(\calX,\La)$ spanned by complexes whose cohomology sheaves are cartesian. The canonical projection $\pi\co X\to \calX$ is smooth, and hence we get a morphism of topoi $X_{\lisset}\to \calX_{\lisset}$. Note that $D_{\text{cart}}(X_{\lisset}, \La)=D(X_{\text{\'et}},\La)$. At the level of derived categories, this gives the pullback functor 
\[
\pi^*\colon D_{\text{cart}}(\calX,\La) \to D(X,\La).
\]
Drinfeld-Gaitsgory define the category of $\bbG_m$-monodromic complexes to be the full subcategory strongly generated by the essential image of $\pi^*$. Note that each complex of the form $\pi^*\calA$ admits a natural isomorphism
\[
a^*(\pi^*\calA)\overset{\simeq}{\longto} p^*(\pi^*\calA)
\]
in $D(\bbG_{m,S}\times X,\La)$. Hence, each $\bbG_m$-monodromic complex in the sense of Drinfeld-Gaitsgory is $\bbG_m$-monodromic in the sense of Definition \ref{mondfn}. However, there may exist (naively) $\bbG_m$-equivariant complexes that are not in the essential image of $\pi^*$.
\end{rmk}

\begin{lem}\label{funcmonlem}
Let $f:Y\to X$ be a $\bbG_{m}$-equivariant morphism of $S$-spaces. \smallskip\\
i) The pair $(f^*, f_*)$  restricts to a pair of adjoint functors on $\bbG_m$-monodromic complexes.\smallskip\\
ii) If $f$ is locally of finite type, the pair $(f_!,f^!)$ restricts to a pair of adjoint functors on $\bbG_m$-monodromic complexes.
\end{lem}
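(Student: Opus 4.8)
The plan is to prove the lemma in three moves: first show that each of the four functors carries a (naively) $\bbG_m$-equivariant complex to a $\bbG_m$-equivariant complex; then observe that, since all four functors are exact and $D(-,\La)^{\Gmon}$ is \emph{strongly generated} by the equivariant complexes (i.e.\ closed under cones), preservation of the generators automatically propagates to the whole subcategory; and finally note that, the subcategories being full, the existing adjunctions $(f^*,f_*)$ and $(f_!,f^!)$ restrict verbatim. So the real content is entirely in the first move.

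First I would record the geometry of the action and projection maps $a,p\co\bbG_{m,S}\times X\to X$. Both are smooth of relative dimension $1$: this is clear for $p$, and for $a$ it follows from the shearing automorphism $\sigma_X\co(\la,x)\mapsto(\la,\la\cdot x)$, which satisfies $a=p\circ\sigma_X$. Writing $a_X,p_X$ (resp.\ $a_Y,p_Y$) for the maps on $X$ (resp.\ $Y$), the $\bbG_m$-equivariance of $f$ gives $a_X\circ(\id\times f)=f\circ a_Y$ and $p_X\circ(\id\times f)=f\circ p_Y$, and both resulting squares are cartesian: the projection square since $(\bbG_{m,S}\times X)\times_X Y=\bbG_{m,S}\times Y$, and the action square because the identity $\sigma_X\circ(\id\times f)=(\id\times f)\circ\sigma_Y$ identifies it with the projection square. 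Finally, relative purity for the smooth maps $a,p$ yields $a^!\cong a^*(1)[2]$ and $p^!\cong p^*(1)[2]$; since the twist $(1)[2]$ is the same invertible autoequivalence in both cases, a complex $\calA$ on $X$ satisfies $a^*\calA\cong p^*\calA$ if and only if $a^!\calA\cong p^!\calA$. Thus equivariance may be tested with either $*$- or $!$-pullback, which is exactly the symmetry that makes both adjoint pairs work.

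With these preliminaries, the preservation of generators splits into two easy families. For the pullbacks $f^*$ and $f^!$, pure functoriality of $(-)^*$ and $(-)^!$ suffices: from $a_X^*\calA\cong p_X^*\calA$ one gets
\[
a_Y^* f^*\calA\cong(\id\times f)^* a_X^*\calA\cong(\id\times f)^* p_X^*\calA\cong p_Y^* f^*\calA,
\]
and the identical computation with $(-)^!$, starting from $a_X^!\calA\cong p_X^!\calA$, shows $f^!\calA$ is equivariant. For the pushforwards I use base change along the smooth maps $a_X,p_X$ on the cartesian squares above: smooth base change gives $a_X^* f_*\cong(\id\times f)_* a_Y^*$ and $p_X^* f_*\cong(\id\times f)_* p_Y^*$, so for $\calB$ equivariant on $Y$,
\[
a_X^* f_*\calB\cong(\id\times f)_* a_Y^*\calB\cong(\id\times f)_* p_Y^*\calB\cong p_X^* f_*\calB,
\]
and the general base change isomorphism $g^* h_!\cong h'_! g'^*$ gives the same chain for $f_!$ when $f$ is locally of finite type.

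For the globalization, fix one of the functors, say $f^*$, and consider the class of objects $\calC\in D(X,\La)$ with $f^*\calC\in D(Y,\La)^{\Gmon}$. Because $f^*$ is exact and $D(Y,\La)^{\Gmon}$ is closed under cones, this class is closed under cones; by the previous step it contains the equivariant complexes; hence it contains all of $D(X,\La)^{\Gmon}$. The same argument applies to $f^!$, and to $f_*,f_!$ with the roles of $X$ and $Y$ swapped. Thus $f^*,f^!$ restrict to $D(X,\La)^{\Gmon}\to D(Y,\La)^{\Gmon}$ and $f_*,f_!$ to $D(Y,\La)^{\Gmon}\to D(X,\La)^{\Gmon}$; fullness of the subcategories makes the adjunction isomorphisms $\Hom(f^*\calA,\calB)\cong\Hom(\calA,f_*\calB)$ and $\Hom(f_!\calA,\calB)\cong\Hom(\calA,f^!\calB)$ hold inside them, which is the claim. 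The only genuinely delicate points are verifying that $a$ is smooth and that the action square is cartesian (both handled by the shearing automorphism), and invoking relative purity to pass between $*$- and $!$-equivariance in the $f^!$ case; once these are in place the argument is formal.
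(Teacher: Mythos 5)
Your proof is correct and takes essentially the same route as the paper's: the paper likewise observes that the action and projection squares are cartesian by equivariance (your shearing automorphism just makes this explicit), invokes smooth base change to see that all four functors preserve naively equivariant complexes, and handles general monodromic complexes ``by induction'', which is precisely your closure-under-cones argument. The extra details you supply --- relative purity $a^!\cong a^*(1)[2]$ to pass between $*$- and $!$-equivariance, and general base change for $f_!$ --- are exactly what the paper compresses into its two-sentence proof, so this is a faithful expansion rather than a different method.
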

\begin{proof}
By $\bbG_{m}$-equivariance we get a cartesian diagram of $S$-spaces
\[
\begin{tikzpicture}[baseline=(current  bounding  box.center)]
\matrix(a)[matrix of math nodes, 
row sep=1.5em, column sep=2em, 
text height=1.5ex, text depth=0.45ex] 
{\bbG_{m,S}\times Y& \bbG_{m,S}\times X \\ 
Y & X \\}; 
\path[->](a-1-1) edge node[above] {$f$} (a-1-2);
\path[->](a-2-1) edge node[above] {$f$} (a-2-2);
\path[->](a-1-1) edge node[left] {$a$} (a-2-1);
\path[->](a-1-2) edge node[right] {$a$} (a-2-2);
\end{tikzpicture}
\]
where $a$ denotes the action (and the same cartesian diagram for the projection $p$). Since $a$ and $p$ are smooth, the lemma follows from smooth base change for equivariant complexes. The general case follows by induction.
\end{proof}

\subsection{Statement of Braden's theorem} 
Let $X/S$ be space, and let $D(X,\La)$ as above the unbounded derived category of $(X_{\text{\'et}},\La)$-modules with $\La=\bbZ/n$ for some $n>1$ invertible on $S$. Let us denote by $D^+(X,\La)$ the full subcategory of $D(X,\La)$ of bounded below complexes. 

\begin{thm}\label{Bradenthm} Let $S$ be a scheme. Let $X/S$ be a space locally of finite presentation with an \'etale locally linearizable $\bbG_{m}$-action. Then, for $\calA\in D^+(X,\La)^{\Gmon}$, the arrow of $D(X^0,\La)$ defined in \eqref{locmorph}
\[
L_{X/S}^-\calA\longto L^+_{X/S}\calA
\] 
is an isomorphism.
\end{thm}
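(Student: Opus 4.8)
The plan is to follow Braden's original strategy as advertised in the introduction: reduce the general statement to the affine case by descent along the equivariant atlas, and handle the affine case by a direct contraction argument. First I would treat the toy model and then bootstrap. The overall shape is: (1) prove the isomorphism when $X=\bbA^n_S$ with a linear $\bbG_m$-action, and (2) deduce the general case using a $\bbG_m$-equivariant \'etale atlas $\{U_i\to X\}_i$ together with closed equivariant embeddings $U_i\hookto \bbA^{n_i}_S$ (Sumihiro-type linearization, available for $S$-affine schemes of finite presentation).

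\emph{The affine case.} Suppose $X=\Spec_S(\calB)$ with $\calB=\oplus_{i\in\bbZ}\calB_i$ a $\bbZ$-grading. By Lemma \ref{affinecase}, $X^\pm$ and $X^0$ are explicit closed subschemes, the maps $q^\pm$ and $p^\pm$ are known, and in fact $j$ is an isomorphism, so the transformation \eqref{locmorph} simplifies considerably. The heart of the matter is the contraction principle: the extended monoid action $\bbA^1_S\times X^+\to X^+$ (used already in Corollary \ref{attractorcor}) gives, for any $\bbG_m$-monodromic $\calA$, an identification of $(q^+)_!(p^+)^*\calA$ with the $*$-restriction to the zero section, and dually for $(q^-)_*(p^-)^!\calA$. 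Concretely I would first establish this for $X=\bbA^1_S$ by hand — analyzing the recollement triangles for the decomposition $\bbA^1_S=\{0\}\amalg\bbG_m$ and using that a monodromic complex on $\bbA^1_S$ has locally constant cohomology along the $\bbG_m$-direction — and then leverage the factorization $\bbA^n=\bbA^1\times\cdots\times\bbA^1$ (or an induction on the weight filtration of the grading) to pass to general linear actions. The key computational input is that for the dilation action on $\bbA^1$, proper base change and the contraction lemma force $(q^+)_!(p^+)^*\simeq i^*$ and $(q^-)_*(p^-)^!\simeq i^!$ on the zero section, and that Braden's comparison map \eqref{locmorph} becomes the canonical identification.

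\emph{Descent to the general case.} With the affine case in hand, let $\{U_i\to X\}_i$ be an $S$-affine $\bbG_m$-equivariant \'etale covering family. By Theorem \ref{repthm}, the induced families $\{U_i^0\to X^0\}_i$ and $\{U_i^\pm\to X^\pm\}_i$ are again \'etale and covering, and by Corollary \ref{bccor} the whole diagram $\HypLoc{X}$ pulls back compatibly. Since the transformation \eqref{locmorph} is constructed purely from the six operations and adjunctions, and since $L^\pm_{X/S}$ commute with the \'etale (in particular smooth) pullbacks coming from the atlas by smooth base change, checking that $L^-_{X/S}\calA\to L^+_{X/S}\calA$ is an isomorphism can be done \'etale locally on $X^0$. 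Pulling back to each $U_i$ — where $U_i\hookto\bbA^{n_i}_S$ is an equivariant closed embedding — reduces the claim on $U_i$ to the linear affine case, provided one knows the construction of \eqref{locmorph} is compatible with the closed immersion $U_i\hookto\bbA^{n_i}_S$ (one must verify the attractors/repellers and the maps $q^\pm,p^\pm$ behave well under equivariant closed embeddings, which follows from the explicit description in Lemma \ref{affinecase} and $\bbG_m$-monodromy being preserved under $*$- and $!$-pullback, cf. Lemma \ref{funcmonlem}).

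\emph{Expected main obstacle.} The genuinely delicate point is the compatibility of Braden's map \eqref{locmorph} with the atlas, not the representability (already secured by Theorem \ref{repthm}) nor the formal contraction on $\bbA^1$. Specifically, the maps $p^\pm$ need not be monomorphisms and the square relating $U_i^\pm$ to $X^\pm$ is only \emph{\'etale}-cartesian in the sense of diagram \eqref{bcrep}, so one must argue carefully that the \emph{specific} natural transformation built from units, counits, and base change descends — i.e. that its formation commutes with the smooth pullback along $U_i\to X$. This requires comparing the base-change isomorphism used in \eqref{compD1} on $X$ with the one on $U_i$ and checking the $(j^!,j_*)$-adjunction step is local; here the fact (Proposition \ref{morprop} iii) that $j$ is an \emph{open and closed} immersion, hence $j^!=j^*$ and $j_!=j_*$, is exactly what makes the contraction argument robust enough to localize. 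I expect the bulk of the real work to lie in setting up these compatibilities cleanly for unbounded torsion coefficients without any constructibility or finiteness hypotheses, which is precisely where the present generality departs from \cite{Br03} and \cite{DG15}.
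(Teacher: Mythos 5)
Your descent step (reduce along the equivariant \'etale atlas using conservativity of $(f^0)^*$ for the covering family $\{U_i^0\to X^0\}$, then push into $\bbA^{n_i}_S$ via an equivariant closed embedding and use conservativity of $(f^0)_*$) matches the paper's \S\ref{compclosedsec}--\ref{competalesec} (Lemmas \ref{embedlem}, \ref{closedlem}, \ref{etalelem}), and you correctly identify the compatibility of the specific transformation \eqref{locmorph} with $f^*$ and $f_*$ as the point needing care. But your affine case has a genuine gap, and moreover the contraction identities you invoke are misstated. The contraction principle gives $(q^+)_!\simeq (i^+)^!$ on monodromic complexes of the attractor and $(q^-)_*\simeq (i^-)^*$ on the repeller (the paper's Lemma \ref{contlem}, from Corollary \ref{seccor2} in the toy case) --- i.e.\ the attractor side is identified with the \emph{$!$-restriction} and the repeller side with the \emph{$*$-restriction}, the opposite of what you wrote; as stated, your claim $(q^+)_!(p^+)^*\simeq i^*$ on $\bbA^1$ already contradicts the computation $q_!\La\simeq\La\langle-1\rangle\neq\La\simeq i_0^*\La$. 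More seriously, even with the identities corrected, they only reduce the theorem to showing $(i^-)^*(p^-)^!\calA\simeq (i^+)^!(p^+)^*\calA$ for monodromic $\calA$ on $\bbV(\calE)$ --- which \emph{is} Braden's theorem, not a canonical identification. Your proposal contains no argument for this.

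Concretely, the missing content is the vanishing statement at the heart of Proposition \ref{linactprop}: writing the triangle $j_!j^*\calA\to\calA\to (p^+)_*(p^+)^*\calA$ for the open complement $j\co \bbV(\calE)\bslash\bbV(\calE^+)\to\bbV(\calE)$, base change along the cartesian square in \eqref{affinehyperloc} handles the third term, and everything hinges on proving $(i^-)^*(p^-)^!(j_!j^*\calA)=0$. This is where the paper does real geometry: it is proved by passing to the partial compactification $Y=\bbP(\calE\oplus\calO_S)\bslash\bbP(\calE^-)$ and applying Braden's contraction lemma (Proposition \ref{contractionlem}) three times with different weight splittings; that lemma in turn is proved via the scheme-theoretic closure $\Ga$ of the graph of the action in $\bbA^1_S\times Y\times Y$, where properness of $p_1$ (Lemma \ref{ebasiclem}) depends crucially on the weight hypothesis, combined with the $\bbA^1_S$ toy computations (Corollaries \ref{seccor1}--\ref{seccor3}). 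Your alternative suggestions --- K\"unneth factorization $\bbA^n=\bbA^1\times\cdots\times\bbA^1$ or ``induction on the weight filtration'' --- are not developed and do not obviously supply this vanishing: the case $\bbA^1$ is degenerate (one of $X^{\pm}$ equals $X$ and the other equals $X^0$, so the comparison map collapses), and already for $\bbA^2$ with weights $(+1,-1)$ the mixed $*/!$ nature of $L^{\pm}$ prevents the comparison map from decomposing factorwise without an argument of essentially the same difficulty as the one you are trying to avoid.
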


Theorem \ref{Bradenthm} implies Theorem B i) from the introduction. The proof follows Braden's original method: we prove the theorem for affine spaces with a linear $\bbG_m$-action, cf. \S \ref{moncomplexes}, \S \ref{secconlem}, \S \ref{linearactions} below, and reduce to the latter case using an equivariant atlas, cf. \S \ref{compclosedsec}, \S \ref{competalesec} below. Let us warm up with the case of $\bbA^1$.
\subsection{Monodromic complexes on $\bbA^1$}\label{moncomplexes}
Let $q\co \bbA^1_S\to S$ denote the structure morphism. Let $i\co S\to \bbA^1_S$ be any morphism of $S$-schemes such that $q\circ i=\id$. As in \eqref{contrafos} above, there are natural transformations 
\begin{equation}\label{easytrafo}
q_*\longto i^*, \;\;\;\text{(resp. $i^!\longto q_!$)} 
\end{equation}
of functors from $D(\bbA^1_S,\La)$ to $D(S,\La)$.

\begin{lem} \label{seclem}
Let $\calB$ be a sheaf of $\La$-modules on $S$. The transformation \eqref{easytrafo} is an isomorphism for $q^*\calB$ (resp. $q^!\calB$).
\end{lem}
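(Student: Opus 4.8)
The plan is to identify both transformations with the unit and counit of adjunctions, and then to reduce the statement to two acyclicity facts for $\bbA^1$: homotopy invariance and relative purity. Since $q\circ i=\id_S$, the composition rules for the six operations give canonical identifications $i^*q^*\simeq\id$ and $i^!q^!\simeq\id$ of endofunctors of $D(S,\La)$. Evaluating \eqref{easytrafo} on $q^*\calB$ and on $q^!\calB$ therefore produces maps
\[
q_*q^*\calB\longto\calB\qquad\text{and}\qquad \calB\longto q_!q^!\calB .
\]
By construction they arise from the unit $\id\to i_*i^*$ and the counit $i_!i^!\to\id$; using $q\circ i=\id_S$ one checks that their composites with the $(q^*,q_*)$-unit $\calB\to q_*q^*\calB$ and the $(q_!,q^!)$-counit $q_!q^!\calB\to\calB$ are the identity of $\calB$. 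Hence it suffices to prove that these unit and counit maps are isomorphisms, i.e. that $q_*q^*\calB\simeq\calB$ and $q_!q^!\calB\simeq\calB$ canonically.

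For $q_*q^*\calB\simeq\calB$ I would invoke homotopy invariance of étale cohomology with torsion coefficients for the affine line. The claim is étale-local on $S$, and because $q$ is of finite presentation the formation of $q_*q^*\calB$ commutes with the cofiltered limits of the base needed to pass to strict henselizations; one is thus reduced to $S$ the spectrum of a strictly henselian local ring, and then to its separably closed residue fields. Over a separably closed field the computation $\rH^0(\bbA^1,\La)=\La$, $\rH^{>0}(\bbA^1,\La)=0$ together with universal coefficients for the module $\calB$ gives exactly $q_*q^*\calB\simeq\calB$, and one sees the isomorphism is the unit.

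For $q_!q^!\calB\simeq\calB$ I would use that $q\co\bbA^1_S\to S$ is smooth of relative dimension one, so relative purity gives $q^!\La\simeq\La(1)[2]$ and hence $q^!\calB\simeq q^*\calB(1)[2]$. The projection formula for $q_!$ then yields
\[
q_!q^!\calB\simeq\calB\otimes_S(q_!\La)(1)[2].
\]
Since $q_!$ satisfies base change along arbitrary morphisms, $q_!\La$ is computed on geometric fibers by the compactly supported cohomology of the affine line, giving $q_!\La\simeq\La(-1)[-2]$; the twist and shift cancel and we obtain $q_!q^!\calB\simeq\calB$, compatibly with the counit. I would prefer this direct computation to deducing the statement from the first case by Verdier duality, since the latter would introduce a dualizing complex and hence a finiteness hypothesis on $S$ that the paper wants to avoid.

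The main obstacle is not a hard geometric input but the bookkeeping of generality. The two geometric facts---homotopy invariance and the purity/compactly supported computation for $\bbA^1$---are classical over separably closed fields, and the real work is to propagate them to an arbitrary base $S$ and to the unbounded derived category $D(-,\La)$. This is what forces the reduction to strictly henselian local rings for $q_*$ (which does not commute with arbitrary base change) and the exploitation of the arbitrary base change property of $q_!$; throughout I would rely on the integral six-functor formalism---base change, projection formula and purity---in the generality of Liu--Zheng cited above, so that no noetherian or constructibility assumptions intervene.
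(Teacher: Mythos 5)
Your proposal is correct and follows essentially the same route as the paper: the $*$-case is reduced, via finite presentation and limit arguments, to a strictly henselian local base and the acyclicity of $\bbA^1$ with torsion coefficients, while the $!$-case uses the purity identification $q^!\simeq q^*\langle 1\rangle$ from Liu--Zheng together with arbitrary base change for $q_!$ and the computation of compactly supported cohomology of $\bbA^1$ over a separably closed field, exactly as in the paper (your unit/counit repackaging and the projection-formula detour are harmless variants of the paper's direct stalkwise check). One caveat: the passage from a strictly henselian local base to its closed fibre in the $*$-case is not a formal limit or base-change step but is precisely the local acyclicity of smooth morphisms ([SGA4$1\over 2$, Arcata, \S V, Thm.~1.7]), which the paper cites and which you should name explicitly, since $q_*$ does not commute with arbitrary base change.
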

\begin{proof} If $S$ is the spectrum of a separably closed field, the map
\begin{equation}\label{seclem1}
q_*q^*\calB\overset{\simeq}{\longto}i^*q^*\calB=\calB
\end{equation}
is an isomorphism: $\calB$ is of torsion invertible on $S$ which implies that $H^i(\bbA^1_{S},q^*\calB)=0$ for $i>0$. In the general case, we may assume $S$ to be the spectrum of a strictly Henselian local ring, and \eqref{seclem1} follows from local acyclicity of smooth morphisms, cf. [SGA4$1\over 2$, Arcata, \S V, Thm. 1.7]\footnote{The noetherian assumption in [SGA4$1\over 2$] can be removed as follows. If $S=\lim_iS_i$ with $S_i$ noetherian and affine, then on constructible sheaves $\text{Sh}_c(S)=\colim_i\text{Sh}_c(S_i)$ by [SGA 4, IX Cor. 2.7.4]. By compatibility with filtered colimits \eqref{seclem1} holds for constructible sheaves with $S$ arbitrary. As every torsion sheaf is a colimit of constructible sheaves, we obtain \eqref{seclem1} in general.}. In the other case, the map\begin{equation}\label{seclem2}
\calB=q^!i^!\calB\overset{\simeq}{\longto}q_!q^!\calB
\end{equation}
is an isomorphism. Indeed, by \cite[Thm. 0.1.4 (2)]{LZ12}, we have $q^!=q^*\langle1\rangle$ with $\langle1\rangle=[2](1)$, and base change reduces us to the case that $S$ is the spectrum of a separably closed field. In this case, $H^{2-i}_c(\bbA^1_S,q^*\calB(1))$ is zero for $i\not= 0$.
\end{proof}

\begin{cor}\label{seccor1} Let $\calB\in D^+(S,\La)$. Then 
\[
q_*(j_!j^*q^*\calB)=0 \;\;\;\;\text{(resp. $q_!(j_*j^*q^!\calB)=0$),}  
\]
where $j\co \bbA^1_S\bslash i(S)\to \bbA^1_S$ is the inclusion. 
\end{cor}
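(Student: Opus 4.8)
The plan is to feed the two recollement triangles attached to the section $i\co S\to \bbA^1_S$ and its open complement $j$ through the functors $q_*$ and $q_!$, and to recognize the resulting comparison maps as precisely the transformations \eqref{easytrafo} whose invertibility is guaranteed by Lemma \ref{seclem}. The whole argument is then a formal triangle chase.

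For the first statement, I would apply the distinguished triangle $j_!j^*\calA\to \calA\to i_*i^*\calA\to$ to $\calA=q^*\calB$ and push forward along the triangulated functor $q_*$. This yields a distinguished triangle
\[
q_*(j_!j^*q^*\calB)\longto q_*q^*\calB\longto q_*i_*i^*q^*\calB\longto.
\]
Since $q\circ i=\id$, one has $q_*i_*\simeq (q\circ i)_*=\id$ and $i^*q^*\calB=(q\circ i)^*\calB=\calB$, so the third term is canonically $\calB$. By the very construction of \eqref{easytrafo} (the unit $\id\to i_*i^*$ pushed by $q_*$), the middle arrow is exactly the transformation $q_*\to i^*$ evaluated on $q^*\calB$. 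Lemma \ref{seclem} asserts this is an isomorphism, and in a distinguished triangle an isomorphism on the second arrow forces the first term to vanish; hence $q_*(j_!j^*q^*\calB)=0$.

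The repeller statement is entirely dual. I would apply the triangle $i_*i^!\calA\to \calA\to j_*j^*\calA\to$ to $\calA=q^!\calB$ and push forward along $q_!$, obtaining
\[
q_!i_*i^!q^!\calB\longto q_!q^!\calB\longto q_!(j_*j^*q^!\calB)\longto.
\]
Using $i_*=i_!$ for the closed immersion together with $q\circ i=\id$, one identifies $q_!i_*i^!q^!\calB=i^!q^!\calB=\calB$, and the first arrow becomes the transformation $i^!\to q_!$ of \eqref{easytrafo} evaluated on $q^!\calB$, which is an isomorphism by Lemma \ref{seclem}. An isomorphism on the first arrow of a distinguished triangle forces the third term to vanish, so $q_!(j_*j^*q^!\calB)=0$.

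The one point requiring care is that Lemma \ref{seclem} is stated for a single sheaf $\calB$, whereas here $\calB\in D^+(S,\La)$, and this is where I expect the only real work. I would upgrade it by d\'evissage: the full subcategory of $D(S,\La)$ on which \eqref{easytrafo} is an isomorphism after applying $q^*$ (resp. $q^!$) is triangulated, hence automatically contains every bounded complex built from sheaves by finitely many cones and shifts. For a genuinely bounded-below complex one writes $\calB\simeq \on{hocolim}_n \tau^{\leq n}\calB$ and uses that $q_*$ (resp. $q_!$) has finite cohomological amplitude on torsion coefficients, so it commutes with this colimit and the isomorphism passes to the colimit. The boundedness-below hypothesis $\calB\in D^+(S,\La)$ is exactly what guarantees the convergence needed here; everything else is formal.
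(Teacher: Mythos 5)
Your proof is correct and follows essentially the same route as the paper: push the two recollement triangles for $i$ and $j$ through $q_*$ (resp.\ $q_!$), identify the surviving arrow with the transformation \eqref{easytrafo} via $q\circ i=\id$, and let Lemma \ref{seclem} force the vanishing of the remaining term. The only difference is cosmetic, in the reduction from sheaves to $D^+(S,\La)$: you use triangulated d\'evissage for bounded complexes plus a truncation/homotopy-colimit argument with finite cohomological amplitude, whereas the paper invokes the sheafy Leray (hypercohomology) spectral sequence, which converges here precisely because $\calB$ is bounded below -- the two devices are interchangeable.
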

\begin{proof} Using the sheafy version of the Leray spectral sequence, we may assume that $\calB$ is a sheaf of $\La$-modules. Applying $q_*$ (resp. $q_!$) to the distinguished triangle $j_!j^*(q^*\calB)\to q^*\calB \to i_*i^*(q^*\calB)\to $ (resp. $i_!i^!(q^!\calB)\to q^!\calB \to j_*j^*(q^!\calB)\to $) we see that the corollary follows from Lemma \ref{seclem}.
\end{proof}

Let $i_{0}\co S\to \bbA^1_S$ denote the zero section. 

\begin{cor} \label{seccor2}
Let $\calA\in D^+(\bbA^1_S,\La)^{\Gmon}$ where $\bbA^1_S$ is equipped with the standard action. The transformation $q_*\to i^*_{0}$ (resp. $i^!_{0}\to q_!$) is an isomorphism for $\calA$.
\end{cor}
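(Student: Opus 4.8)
The plan is to reduce Corollary~\ref{seccor2} to the case of genuinely $\bbG_m$-equivariant complexes handled by Corollary~\ref{seccor1}, exploiting the monodromic structure. Since $D^+(\bbA^1_S,\La)^{\Gmon}$ is strongly generated by $\bbG_m$-equivariant complexes, and since the functors $q_*$, $i_0^*$, $q_!$, $i_0^!$ together with the transformation between them are all exact (they send distinguished triangles to distinguished triangles, and a natural transformation of exact functors restricts compatibly on cones), the class of complexes for which $q_*\to i_0^*$ is an isomorphism is closed under taking cones. Hence it suffices to prove the statement when $\calA$ itself is $\bbG_m$-equivariant, i.e. when there is an isomorphism $a^*\calA\simeq p^*\calA$ on $\bbG_{m,S}\times\bbA^1_S$. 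The boundedness hypothesis $\calA\in D^+$ is what lets me pass from sheaves to complexes via the Leray spectral sequence as in the proof of Corollary~\ref{seccor1}, so I would first reduce (by the hypercohomology/truncation argument) to the case where $\calA$ is a single $\bbG_m$-equivariant sheaf of $\La$-modules.

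Next I would unwind what $\bbG_m$-equivariance buys us on $\bbA^1$. Let $j\co \bbG_{m,S}\hookto \bbA^1_S$ be the open complement of the zero section $i_0$. The key point is that the restriction $j^*\calA$ on $\bbG_{m,S}$ is $\bbG_m$-equivariant for the translation (multiplication) action, and a $\bbG_m$-equivariant sheaf on the torus $\bbG_{m,S}\to S$ is constant along the fibers, i.e.\ of the form $(j\circ \text{pr})^*\calB$ pulled back from $S$ up to the equivariance datum; more precisely, equivariance forces $j^*\calA$ to be (a summand/descent of) $m^*\calB$ for $\calB$ on $S$. I would use this to identify $j^*\calA \simeq q^*\calB\,|_{\bbG_{m,S}}$ for an appropriate $\calB=i_0^*\calA$ (or its relevant cohomology). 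Granting this, the distinguished triangle
\[
j_!j^*\calA\longto \calA\longto (i_0)_*i_0^*\calA\longto
\]
splits the computation of $q_*\calA$: applying $q_*$, the third term computes $i_0^*\calA$ on the nose (via $q\circ i_0=\id$), and the first term $q_*(j_!j^*\calA)=q_*(j_!j^*q^*\calB)$ vanishes by Corollary~\ref{seccor1}. Therefore $q_*\calA\overset{\simeq}{\longto} q_*(i_0)_*i_0^*\calA=i_0^*\calA$, which is exactly the assertion that $q_*\to i_0^*$ is an isomorphism on $\calA$. The repeller case $i_0^!\to q_!$ is dual: one uses the other distinguished triangle $(i_0)_!i_0^!\calA\to \calA\to j_*j^*\calA\to$, applies $q_!$, and invokes the vanishing $q_!(j_*j^*q^!\calB)=0$ from Corollary~\ref{seccor1}, together with $q^!\calB=q^*\calB\langle 1\rangle$ as recorded in Lemma~\ref{seclem}.

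The main obstacle I anticipate is making rigorous the step identifying $j^*\calA$ with something pulled back from $S$ along $q$ purely from naive $\bbG_m$-equivariance (existence of an isomorphism $a^*\calA\simeq p^*\calA$, without a coherence/cocycle condition). For the honest descent statement along $\bbG_{m,S}\to S$ one would want the equivariance isomorphism to satisfy the cocycle condition on $\bbG_m\times\bbG_m$, which is not part of the definition of \emph{naively} $\bbG_m$-equivariant. I expect the paper circumvents this by not descending fully but instead arguing directly: restrict the equivariance isomorphism $a^*\calA\simeq p^*\calA$ over the unit section and over the open torus to compare $\calA$ with its translates, and feed this into the vanishing $q_*(j_!j^*q^*\calB)=0$ — that is, one only needs that $j^*\calA$ looks locally on $S$ like a pullback along $q$ after passing to cohomology, which the local acyclicity input of Lemma~\ref{seclem} already supplies. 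Concretely, I would phrase the argument so that the only property of $j^*\calA$ actually used is that $q_*$ (resp.\ $q_!$) of its extension-by-zero (resp.\ pushforward) along $j$ vanishes, and show this vanishing follows from equivariance by smooth base change along the action map $a$, reducing the computation on $\bbG_{m,S}$ to the fiber over $S$. This keeps the argument within the torsion, non-noetherian, unbounded-but-bounded-below framework the paper insists on, avoiding any appeal to a full equivariant derived category.
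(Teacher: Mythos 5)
Your proposal is correct and follows essentially the same route as the paper: reduce by the cone/induction argument to the case where $\calA$ is $\bbG_m$-equivariant, identify $j^*\calA\simeq j^*q^*\calB$ by restricting the equivariance isomorphism $a^*\calA\simeq p^*\calA$ along the unit section (exactly the move you anticipate in your last paragraph; no cocycle condition is needed), and conclude from Corollary \ref{seccor1} via the two localization triangles, with $q^!=q^*\langle 1\rangle$ handling the $!$-case. One small correction: the complex $\calB$ is $i_1^*\calA$ (unit section), not $i_0^*\calA$ --- for instance $\calA=(i_0)_*\calC$ is equivariant with $j^*\calA=0$ yet $i_0^*\calA=\calC$ --- but since, as you say, only the existence of some such $\calB$ is used this is harmless, and your preliminary reduction to a single equivariant sheaf is unnecessary because Corollary \ref{seccor1} already treats complexes $\calB\in D^+(S,\La)$.
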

\begin{proof}
By induction we may assume that $\calA$ is $\bbG_m$-equivariant. Let $j\co \bbG_{m,S}\to \bbA^1_S$ be the inclusion. Then the complex $j^*\calA$ is of the form $j^*q^*\calB$ for a complex $\calB\in D^+(S,\La)$. Hence, Corollary \ref{seccor1} implies the claim.
\end{proof}

Let $i_{1}\co S\to \bbA^1_S$ be the unit section.

\begin{cor}\label{seccor3} Let $\calB\in D^+(S,\La)$. Let $\varphi\co q^*\calB\to q^*\calB$ (resp. $q^!\calB\to q^!\calB$) be a map such that $i^*_{1}\varphi$ (resp. $i^!_{1}\varphi$) is an isomorphism and $i^*_{0}\varphi$ (resp. $i^!_{0}\varphi$) is zero. Then $\calB=0$.
\end{cor}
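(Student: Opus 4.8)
The plan is to apply the pushforward $q_*$ (resp. $q_!$) to $\varphi$ and to exploit the naturality of the transformation \eqref{easytrafo} together with Lemma \ref{seclem}; this forces the two hypotheses into an incompatibility that can only be resolved by $\calB=0$. I describe the covariant case $\varphi\co q^*\calB\to q^*\calB$; the case of $q^!\calB$ runs symmetrically, with $q_*$ and the transformation $q_*\to i^*$ replaced throughout by $q_!$ and $i^!\to q_!$.

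The key point is that the transformation $q_*\to i^*$ of \eqref{easytrafo} is natural in its argument, so for every section $i$ with $q\circ i=\id$ there is a commutative square
\[
\begin{CD}
q_*q^*\calB @>{q_*\varphi}>> q_*q^*\calB \\
@V{\alpha_i}VV @VV{\alpha_i}V \\
\calB @>{i^*\varphi}>> \calB
\end{CD}
\]
in which $i^*q^*\calB$ is identified with $\calB$ via $i^*q^*=\id$, and the vertical comparison maps $\alpha_i\co q_*q^*\calB\to\calB$ are isomorphisms by Lemma \ref{seclem}. Hence $i^*\varphi=\alpha_i\circ(q_*\varphi)\circ\alpha_i^{-1}$. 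Taking $i=i_0$ and using $i_0^*\varphi=0$ gives $q_*\varphi=0$, while taking $i=i_1$ and using that $i_1^*\varphi$ is an isomorphism gives that $q_*\varphi$ is an isomorphism.

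Thus the single endomorphism $q_*\varphi$ of $q_*q^*\calB$ is at once zero and invertible, whence $\id_{q_*q^*\calB}=(q_*\varphi)^{-1}\circ(q_*\varphi)=0$ and therefore $q_*q^*\calB=0$. Since $\alpha_{i_0}$ identifies $q_*q^*\calB$ with $\calB$, I conclude $\calB=0$, as claimed.

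I do not expect a genuine obstacle: the whole argument is the observation that the one map $q_*\varphi$ specializes, via naturality of \eqref{easytrafo}, to both $i_0^*\varphi$ and $i_1^*\varphi$, so Lemma \ref{seclem} converts \emph{vanishing at the zero section and invertible at the unit section} into \emph{vanishing and invertible simultaneously}. The single technical point is that Lemma \ref{seclem} is stated for a sheaf $\calB$, whereas here $\calB\in D^+(S,\La)$; to apply it I would argue exactly as in the proof of Corollary \ref{seccor1}, filtering $\calB$ by its truncations and invoking the sheafy Leray spectral sequence, whose convergence is guaranteed by the hypothesis that $\calB$ is bounded below.
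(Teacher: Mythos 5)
Your proof is correct and is essentially the paper's own argument: the paper simply merges your two naturality squares for $q_*\to i^*$ at the sections $i_0$ and $i_1$ into one commutative diagram in which the horizontal comparison maps $q_*q^*\calB\to i_1^*q^*\calB$ and $q_*q^*\calB\to i_0^*q^*\calB$ are isomorphisms (via Lemma \ref{seclem} and Corollary \ref{seccor2}, since $q^*\calB$ is monodromic), and then concludes, exactly as you do by conjugating $q_*\varphi$, that the zero map on $\calB$ is an isomorphism, whence $\calB=0$. Your closing remark about extending Lemma \ref{seclem} from sheaves to bounded below complexes via the sheafy Leray spectral sequence is precisely how the paper handles the same point in the proof of Corollary \ref{seccor1}.
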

\begin{proof}
Since $q^*\calB$ is $\bbG_m$-monodromic, there is a commutative (up to natural isomorphism) diagram in $D^+(S,\La)$
\begin{equation}
\begin{tikzpicture}[baseline=(current  bounding  box.center)]
\matrix(a)[matrix of math nodes, 
row sep=1.5em, column sep=2em, 
text height=1.5ex, text depth=0.45ex] 
{i^*_{1}q^*\calB & q_*q^*\calB & i^*_{0}q^*\calB \\ 
i^*_{1}q^*\calB & q_*q^*\calB & i^*_{0}q^*\calB, \\ }; 
\path[->](a-1-1) edge node[left] {$\simeq$} (a-2-1);
\path[->](a-1-2) edge node[above] {$\simeq$} (a-1-1);
\path[->](a-2-2) edge node[above] {$\simeq$} (a-2-1);
\path[->](a-1-2) edge node[above] {$\simeq$} (a-1-3); 
\path[->](a-1-2) edge  (a-2-2); 
\path[->](a-2-2) edge node[above] {$\simeq$} (a-2-3);
\path[->](a-1-3) edge node[right] {$0$} (a-2-3);
\end{tikzpicture}
\end{equation}
where the vertical maps are induced by $\varphi$. Hence, the zero map on $\calB$ is an isomorphism, i.e. $\calB=0$. The $!$-case follows similarly.
\end{proof}

\subsection{Braden's contraction lemma}\label{secconlem} The core of Theorem \ref{Bradenthm} for affine spaces with a linear $\bbG_m$-action is an analogue of Braden's contraction lemma \cite[Lem. 6]{Br03}. We follow his method which is based on arguments of Springer \cite{Sp84}.\smallskip\\
Let $\calE$ be a locally free $\calO_S$-module of finite rank. Let $\bbV(\calE)=\Spec_S(\text{Sym}^\otimes(\calE))$ be the associated vector bundle over $S$. A linear $\bbG_m$-action on $\bbV(\calE)$ is equivalent to a morphism of group schemes $\bbG_{m,S}\to \GL(\calE)$. If $S$ is connected such a morphism corresponds to a $\bbZ$-grading on $\calE$, i.e. as $\calO_S$-modules 
\begin{equation}\label{gradedvec}
\calE=\oplus_{i\in\bbZ}\calE_i,
\end{equation}
where the decomposition is according to the weights of the $\bbG_m$-action. Let us fix a non-trivial decomposition 
\begin{equation}\label{wdecom}
\calE=\calE^+\oplus\calE^-,
\end{equation}
such that $\calE^+=\oplus_{i\geq k}\calE_{i}$ and $\calE^-=\oplus_{i<k}\calE_i$ for some fixed $k\in\bbZ$. Let $\bbP(\calE)=\text{Proj}_S(\text{Sym}^\otimes(\calE))$ be the corresponding projective bundle over $S$. Then $Z=\bbP(\calE^-)$, and $Y=\bbP(\calE)\bslash \bbP(\calE^+)$ are $S$-schemes equipped with $\bbG_m$-actions. The decomposition \eqref{wdecom} gives $\bbG_m$-equivariant maps
\begin{equation}
Z\overset{\iota}{\longto} Y\overset{\pi}{\longto} Z
\end{equation}
with $\pi\circ \iota=\id$ and $Z^0=Y^0$. Note that the last equality already follows from all weights of $\calE^+$ being different from all weights of $\calE^-$.


\begin{rmk} Recall that $\bbP(\calE)$ represents the functor that to an $S$-scheme $T$ associates the set of locally direct summand quasi-coherent $\calO_T$-submodules $\calF\subset \calE_T$  such that $\calE_T/\calF$ is locally free of rank $1$. In this description the map $\iota$ is given by $\calF\mapsto \calF\oplus\calE^+_T$ and $\pi$ is given by $\calF\mapsto \calF\cap\calE_T^-$. To check that $\pi$ is well-defined note that for any locally direct summand $\calF\subset \calE_T$ as above not containing $\calE^-_T$ the natural map $\calE_T^-\to \calE_T/\calF$ is surjective (use Nakayama) and gives an isomorphism $\calE_T^-/(\calE^-_T\cap\calF)\simeq \calE_T/\calF$.
\end{rmk}

Let us denote by $\tau\co Z\to S$ the structure morphism. Applying $\tau_*$ (resp. $\tau_!$) to the natural transformation $\pi_*\to \iota^*$ (resp. $\iota^!\to \pi_!$) gives
\begin{equation}\label{bradentrafos}
\tau_*\pi_*\longto \tau_*\iota^* \;\;\;\;\;\text{(resp. $\tau_!\iota^!\to \tau_!\pi_!$)}
\end{equation}
as natural transformations from $D(Y,\La)$ to $D(S,\La)$. Note that $\tau$ is proper and thus $\tau_*=\tau_!$.  

\begin{prop}[Braden's contraction lemma] \label{contractionlem} 
The transformations \eqref{bradentrafos} restricted to the category $D^+(Y,\La)^{\Gmon}$ are isomorphisms.
\end{prop}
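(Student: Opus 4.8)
The plan is to follow Braden's method, replacing his use of constructibility and properness by the local-acyclicity input already isolated in Lemma \ref{seclem}, which holds over an arbitrary base. First I would record the geometry behind \eqref{wdecom}: since every weight of $\calE^+$ is strictly larger than every weight of $\calE^-$, the map $\pi\co Y\to Z$ is a $\bbG_m$-equivariant vector bundle with zero section $\iota$, and the $\bbG_m$-action scales its fibers with nonzero weights all of the same sign, so that the fiberwise fixed locus of $\pi$ is exactly $\iota(Z)$ and $Y^0=Z^0$. As $Z=\bbP(\calE^-)$ is a projective bundle, $\tau$ is proper and $\tau_*=\tau_!$. Finally, the substitution $\la\mapsto\la^{-1}$ together with Verdier duality interchanges the pair $(\pi_*,\iota^*)$ with $(\iota^!,\pi_!)$ and the two transformations in \eqref{bradentrafos}; hence it suffices to prove that $\tau_*\pi_*\calA\to\tau_*\iota^*\calA$ is an isomorphism, and I would in fact prove the sharper statement that $\pi_*\calA\to\iota^*\calA$ is already an isomorphism before applying $\tau_*$.

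For the sharper statement, write $j\co Y^{\times}\hookrightarrow Y$ for the complement of the zero section and use $\pi\circ\iota=\id$: applying $\pi_*$ to the localization triangle $j_!j^*\calA\to\calA\to\iota_*\iota^*\calA$ identifies the cone of $\pi_*\calA\to\iota^*\calA$ with $\pi_*(j_!j^*\calA)$, so the claim becomes $\pi_*(j_!j^*\calA)=0$ for $\calA\in D^+(Y,\La)^{\Gmon}$. Since $\pi_*$ commutes with smooth base change, this vanishing may be checked \'etale-locally on $Z$, where $\calE^+$ splits into a direct sum of weight subbundles; a d\'evissage along the resulting $\bbG_m$-stable filtration, using Lemma \ref{funcmonlem} to preserve monodromicity, reduces to the case where $\pi$ is a line bundle, i.e. an $\bbA^1$-bundle over $Z$ with the fiberwise scaling action. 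In that case $Y^{\times}\to Z$ is a $\bbG_m$-torsor, the restriction $j^*\calA$ is pulled back from $Z$, and the vanishing $\pi_*(j_!j^*\calA)=0$ is exactly the relative form over the base $Z$ of Corollary \ref{seccor1}. The crucial point is that the proof of that corollary rests only on Lemma \ref{seclem}, i.e. on local acyclicity of the smooth morphism $\pi$, and therefore goes through verbatim over an arbitrary base with no finiteness hypotheses. Applying $\tau_*$ to the resulting isomorphism $\pi_*\calA\cong\iota^*\calA$ (resp. $\tau_!=\tau_*$ to the dual isomorphism $\iota^!\calA\cong\pi_!\calA$, obtained from the second localization triangle and the $!$-version of Corollary \ref{seccor1}) gives \eqref{bradentrafos}.

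The main obstacle is that $\pi$ is affine rather than proper, so one cannot compute $\pi_*(j_!j^*\calA)$ fiberwise over $Z$ by proper base change; this is precisely why the argument must pass through the local-acyclicity computation of \S\ref{moncomplexes} instead of a naive fiberwise reduction. The point I expect to require the most care is the bookkeeping of equivariance over the non-fixed locus of $Z$: the complex $\calA$ is monodromic for the original $\bbG_m$-action, whereas the relative toy case is phrased in terms of the fiberwise scaling, and one must check that, restricted to the punctured line bundle $Y^{\times}$, monodromicity for the original action yields the descent of $j^*\calA$ along $Y^{\times}\to Z$ needed to invoke Corollary \ref{seccor1}. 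This works because the two actions agree in the fiber directions, but verifying it cleanly at each stage of the d\'evissage is the technical heart. Granting these points, the higher-rank case follows formally by induction along the triangle of a sub-bundle and its quotient.
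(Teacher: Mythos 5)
You have correctly identified the geometry ($\pi\co Y\to Z$ is an equivariant vector bundle, $Y^0=Z^0$, $\tau$ proper), but your opening reduction is where the argument breaks: the ``sharper statement'' that $\pi_*\calA\to\iota^*\calA$ is an isomorphism \emph{before} applying $\tau_*$ is false, and so is the intermediate vanishing $\pi_*(j_!j^*\calA)=0$. The zero section $\iota(Z)$ is not the fixed locus of the acting $\bbG_m$: one only has $Y^0=Z^0$, and $Z=\bbP(\calE^-)$ carries a nontrivial action as soon as $\calE^-$ has two distinct weights, so monodromicity for the global action imposes no condition along the fibres of $\pi$ over the moving part of $Z$. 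Concretely, let $S$ be a field, $\calE^-$ of rank $2$ with two distinct weights $<k$, and $\calE^+$ of rank one; then $Z\cong\bbP^1$ has an open free orbit $O\cong\bbG_m$ and $\pi$ is a line bundle. Pick a point $v_0\in Y$ over $z_0\in O$ off the zero section, let $C\subset Y$ be its $\bbG_m$-orbit --- a locally closed invariant curve which is a section of $\pi$ over $O$ avoiding $\iota(Z)$ --- and set $\calA=c_!\La$ for $c\co C\to Y$. Since $C$ is invariant and extension by zero commutes with pullback, $\calA$ is naively equivariant and bounded; moreover $j_!j^*\calA=\calA$ and $\iota^*\calA=0$, yet $\pi_*\calA$ has stalk $\La$ at every geometric point of $O$, so \'etale-locally near such a point your claimed vanishing reads $\La=0$. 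The proposition is nevertheless true for this $\calA$, but only after $\tau_*$: by the weight-gap hypothesis in \eqref{wdecom} (this is exactly Lemma \ref{ebasiclem}) the $\la\to 0$ limit of the orbit stays in $Y$ while the $\la\to\infty$ limit escapes into $\bbP(\calE^+)$, so the closure of $C$ in $Y$ adds a single point and $\tau_*\pi_*\calA=R\Ga(Y,\calA)=0$. The cancellation is thus genuinely global over $S$ and invisible to any fibrewise or \'etale-local-on-$Z$ computation. Two subsidiary steps fail for the same reason: an \'etale chart of $Z$ is not $\bbG_m$-stable (the action moves $Z$), so you cannot restrict the action to it at all; and your descent claim that $j^*\calA$ is pulled back from $Z$ uses that $Y^{\times}\to Z$ is a torsor under the \emph{fibrewise scaling}, for which $\calA$ carries no equivariance --- in the example $j^*\calA$ is supported on a section avoiding the zero locus and is visibly not pulled back from $Z$.

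This is precisely the difficulty the paper's proof is built to avoid: it never asserts anything sheaf-theoretic over $Z$. Instead it compactifies the $\la\to 0$ degeneration by the closure $\Ga$ of the action graph in $\bbA^1_S\times Y\times Y$, uses the weight gap to make $p_1\co\Ga\to\bbA^1_S\times Y$ proper (Lemma \ref{ebasiclem} ii)), and then works entirely over $S$: setting $\calB=\tau_*\pi_*(\sig_!\sig^*\calA)$, it produces an endomorphism of $q^*\calB$ on $\bbA^1_S$ that is invertible at the unit section and zero at the zero section, forcing $\calB=0$ by the homotopy argument of Corollary \ref{seccor3}; properness of $\tau$ \emph{and} of $p_1$ are both indispensable. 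Your instinct to feed in the local-acyclicity input of Lemma \ref{seclem} and Corollary \ref{seccor1} is sound --- the paper uses exactly this in Lemma \ref{Bradeniso} --- but it enters after the compactified-graph manipulation, not through a fibrewise reduction. Finally, in this torsion setting without finiteness hypotheses you may not invoke Verdier duality to exchange the two transformations in \eqref{bradentrafos}; the paper notes this and treats the $!$-case by a separate parallel argument (Remark \ref{shriekadjustments}), and your fallback route via the second localization triangle rests on the same false fibrewise isomorphism $\iota^!\calA\simeq\pi_!\calA$ (apply the dual of the counterexample above, with $c_*\La$), so it does not repair the proof.
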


\begin{rmk}\label{inverse} Let us point out that the proposition also holds true with the roles of $\calE^+$ and $\calE^-$ interchanged. Indeed, any complex is $\bbG_m$-monodromic if and only if it is $\bbG_m$-monodromic for the inverse $\bbG_m$-action. 
\end{rmk}

We first prove the proposition for $\tau_*\pi_*\longto \tau_*\iota^*$, and then explain the adjustments for the $!$-case in Remark \ref{shriekadjustments} below. Of course, in the presence of duality both transformations are dual to each other. Let us start.

 Let $\bbG_{m,S}\times Y\to \bbG_{m,S}\times Y\times Y$,  $(\la,y)\mapsto (\la,y,\la\cdot y)$ be the graph of the action map (closed because $Y/S$ is separated), and denote by $\Ga$ its scheme theoretic closure in $\bbA^1_S\times Y\times Y$. Let $p_1\co \Ga\to \bbA^1_S\times Y$, $(y_1,y_2,y_3)\mapsto (y_1,y_2)$ and let $p_2\co \Ga\to \bbA^1_S\times Y$, $(y_1,y_2,y_3)\mapsto (y_1,y_3)$.  

\begin{lem} \label{ebasiclem}
i) The morphisms $p_1, p_2\co \Ga\to \bbA^1_S\times Y$ are isomorphisms over $\bbG_{m,S}\times Y$.\smallskip\\
ii) The morphism $p_1\co \Ga\to \bbA^1_S\times Y$ is proper.
\end{lem}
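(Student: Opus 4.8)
The plan is to handle part i) by a direct computation over the open locus $\bbG_{m,S}\times Y$, and part ii) by compactifying the third factor and reducing properness to the properness of $\bbP(\calE)/S$ together with the existence of contraction limits inside $Y$.

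For part i), I first note that $Y/S$ is separated, being locally closed in the projective bundle $\bbP(\calE)$. Hence the graph of the action $\bbG_{m,S}\times Y\to \bbG_{m,S}\times Y\times Y$, $(\la,y)\mapsto(\la,y,\la\cdot y)$, is a section of a separated morphism and therefore a closed immersion. Since scheme-theoretic closure commutes with restriction to an open, the preimage $p_1^{-1}(\bbG_{m,S}\times Y)$ inside $\Ga$ coincides with this graph. On the graph the third coordinate equals $\la\cdot y$, determined by $(\la,y)$; so $p_1$ restricts to $(\la,y,\la\cdot y)\mapsto(\la,y)$, an isomorphism. Likewise $p_2$ restricts to $(\la,y,\la\cdot y)\mapsto(\la,\la\cdot y)$, with inverse $(\la,z)\mapsto(\la,\la^{-1}\cdot z,z)$, which is defined because $\la$ is invertible over $\bbG_{m,S}$. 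This gives i).

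For part ii), the idea is to replace the target $Y$ of the third projection by its compactification $\bbP(\calE)$. Using $Y\hookrightarrow\bbP(\calE)$, I view the graph inside $\bbA^1_S\times Y\times\bbP(\calE)$ and let $\overline{\Ga}$ be its scheme-theoretic closure there. The projection $\bbA^1_S\times Y\times\bbP(\calE)\to\bbA^1_S\times Y$ is proper since $\bbP(\calE)/S$ is proper, so the composite $\overline{\Ga}\to\bbA^1_S\times Y$, a closed immersion followed by this projection, is proper. It then suffices to show that $\overline{\Ga}$ is contained in the open subscheme $\bbA^1_S\times Y\times Y$: granting this, $\overline{\Ga}=\overline{\Ga}\cap(\bbA^1_S\times Y\times Y)=\Ga$, and the properness of $p_1$ follows at once.

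The heart of the matter is thus to prove that the third coordinate of every point of $\overline{\Ga}$ lands in $Y=\bbP(\calE)\setminus\bbP(\calE^+)$, never in $\bbP(\calE^+)$. Over $\bbG_{m,S}\times Y$ this is clear, so the content lies in the fibre over $\{0\}\times Y$, which I would control by the valuative criterion: given a valuation ring $R$ with fraction field $K$ and a $K$-point of $\overline{\Ga}$ lying over $(\la,y)\in(\bbA^1_S\times Y)(R)$ with $\la$ specializing to $0$, I compute the unique limit of $\la\cdot y$ in $\bbP(\calE)$ supplied by properness. Using the functorial description recalled in the Remark (a point is a locally split $\calF\subset\calE$ with $\calE/\calF$ a line, on which $\bbG_m$ acts through the grading), the condition $y\in Y$ says precisely that $\calE^-\to\calE/\calF$ is nonzero, and a weight computation shows the limit $\lim_{\la\to0}\la\cdot y$ is governed by the extreme-weight contribution coming from $\calE^-$, hence equals $\iota(\pi(y))\in Z\subset Y$; this is the geometric shadow of $\pi\circ\iota=\id$ and $Z^0=Y^0$. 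The main obstacle is exactly this computation: one must check, uniformly over the arbitrary base $S$ and after the normalization $\calE^+=\oplus_{i\ge k}\calE_i$, $\calE^-=\oplus_{i<k}\calE_i$, that the limiting $\calF$ still meets $\calE^-$ in a hyperplane rather than containing all of $\calE^-$, which would push it into $\bbP(\calE^+)$. I expect to reduce to the split graded situation over a local base and argue with an adapted basis, keeping track that it is the scheme-theoretic closure that is controlled; separatedness of $Y/S$ and flatness of $\bbA^1_S$ over $S$ ensure the closure behaves well under the base change to $R$.
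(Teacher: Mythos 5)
Your proof follows the paper's strategy in both parts. Part i) is the paper's argument: scheme-theoretic closure commutes with restriction along the retrocompact open $\bbG_{m,S}\times Y\times Y\subset \bbA^1_S\times Y\times Y$ (the paper phrases this as the ideal identity $I=(I\cap R[t])\cdot R[t,t^{-1}]$), after which $p_1,p_2$ are read off on the graph exactly as you do. Part ii) is the same compactification of the third factor, reducing properness of $p_1$ to showing the closure $\overline{\Ga}$ in $\bbA^1_S\times Y\times\bbP(\calE)$ lies in the open $\bbA^1_S\times Y\times Y$; note that your identity $\overline{\Ga}\cap(\bbA^1_S\times Y\times Y)=\Ga$ is again the commuting-with-opens fact and uses that $\Ga_a$ is closed in $\bbG_{m,S}\times Y\times\bbP(\calE)$ (the paper checks this via the graph of the full action on $\bbP(\calE)$, plus quasi-compactness of $\bbG_{m,S}\subset\bbA^1_S$). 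The one genuine difference is the point-set step: the paper reduces surjectivity of $|\Ga|\to|\tilde{\Ga}|$ to $S$ the spectrum of an algebraically closed field and computes limits of closed points in coordinates, while you run a valuative criterion with an arbitrary valuation ring $R$. Since $\Ga_a\to\bbA^1_S\times Y\times\bbP(\calE)$ is quasi-compact, every point of $\overline{\Ga}$ is a specialization of a point of the graph, and specializations are realized by valuation rings; so your variant is sound and in fact treats a general base uniformly, with no reduction step. The computation you defer is also shorter than you fear: over the local ring $R$ split the grading, write the coordinates of $y$ as a unimodular vector $(y_w)_w$ with some $y_{w_0}$, $w_0<k$, a unit (this is exactly the condition $y\in Y(R)$), and let $v(\la)>0$; then for every $w\ge k$
\[
v(\la^{w}y_w)\;=\;w\,v(\la)+v(y_w)\;\ge\; k\,v(\la)\;>\;w_0\,v(\la)\;=\;v\bigl(\la^{w_0}y_{w_0}\bigr),
\]
so after dividing by an entry of minimal valuation every $\calE^+$-entry lies in the maximal ideal, and the special point of the extended section avoids $\bbP(\calE^+)$ — indeed it lies in $Z=\bbP(\calE^-)$.

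One claim must be corrected: the limit is \emph{not} $\iota(\pi(y))$ in general, and $\pi\circ\iota=\id$ is not its geometric shadow. Over a field, $\lim_{\la\to 0}\la\cdot y$ is concentrated in the single lowest weight occurring among the nonzero coordinates of $y$ (this is the paper's $[0:{'y}^-]$), whereas $\iota(\pi(y))$ retains the entire $\calE^-$-component of $y$; the two differ as soon as that component involves two distinct weights, because the $\bbG_m$-action on $Z=\bbP(\calE^-)$ is itself nontrivial (the contraction $q^-$ is not the linear retraction $\pi$). Fortunately nothing in your argument uses this identification: all you need is that the limiting $\calF$ does not contain $\calE^-$, i.e.\ that the special point avoids $\bbP(\calE^+)$, and that is precisely what the displayed inequality gives.
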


\begin{rmk} For part ii), the hypothesis on the weights in \eqref{wdecom} is crucial.
\end{rmk}

\begin{proof}[Proof of Lemma \ref{ebasiclem}] 
Part i) follows from the following elementary fact. If $I\subset R[t,t^{-1}]$ is an ideal for some ring $R$, then $I=(I\cap R[t])\cdot R[t,t^{-1}]$ by flatness of localizations. We claim that $\Ga$ is a closed subscheme of $\bbA^1_S\times Y\times \bbP(\calE)$. Consider the cartesian diagram of $S$-schemes
\[
\begin{tikzpicture}[baseline=(current  bounding  box.center)]
\matrix(a)[matrix of math nodes, 
row sep=1em, column sep=2em, 
text height=1.5ex, text depth=0.45ex] 
{\bbG_{m,S}\times Y\times Y & \bbG_{m,S}\times Y\times \bbP(\calE) \\ 
\bbA^1_S\times Y\times Y & \bbA^1_S\times Y\times \bbP(\calE), \\}; 
\path[->](a-1-1) edge node[above] {} (a-1-2);
\path[->](a-1-1) edge node[left] {} (a-2-1);
\path[->](a-1-2) edge node[right] {} (a-2-2);
\path[->](a-2-1) edge node[above] {} (a-2-2);
\end{tikzpicture}
\]
where all maps are open immersions. Let $\Ga_a$ denote the graph of the action map $a\co \bbG_{m,S}\times Y\to Y$. Then $\Ga_a$ is a closed subscheme of both $\bbG_{m,S}\times Y\times Y$ and $ \bbG_{m,S}\times Y\times \bbP(\calE)$ (because $\Ga_a$ is the restriction of the graph of the full action $\bbG_{m,S}\times\bbP(\calE)\to \bbP(\calE)$ to $\bbG_{m,S}\times Y\times \bbP(\calE)$). Let $\tilde{\Ga}$ be the closure of $\Ga_a$ in $\bbA^1_S\times Y\times \bbP(\calE)$. Since $\bbG_{m,S}\subset \bbA^1_S$ is quasi-compact, we have by \cite[Tag 01R5, Lem. 28.6.3]{StaPro} that $\Ga=\tilde{\Ga}\cap(\bbA^1_S\times Y\times Y)$. Hence, $\Ga\to\tilde{\Ga}$ is an open immersion, and it is enough to show that $|\Ga|\to |\tilde{\Ga}|$ is surjective on topological spaces. For this we may assume $S$ to be the spectrum of an algebraically closed field. It is enough to show that $|\Ga|\to |\tilde{\Ga}|$ is surjective over points lying in the zero section of $\bbA^1_S$. An argument on coordinates implies that whenever $[y^+:y^-]\in Y(S)$ (i.e. $y^-\not = 0$) is a point, then its limit $\lim_{\la\to 0}\la\cdot [y^+:y^-]$ in $\bbP(\calE)(S)$ is of the form $[0:{'y}^-]\in Y(S)$ (because of the weight hypothesis in \eqref{wdecom}). This shows surjectivity of $|\Ga|\to |\tilde{\Ga}|$ and proves the lemma.
\end{proof}

Consider the following diagram of $S$-schemes with the square being cartesian
\begin{equation}\label{notationdiag}
\begin{tikzpicture}[baseline=(current  bounding  box.center)]
\matrix(a)[matrix of math nodes, 
row sep=1.5em, column sep=2em, 
text height=1.5ex, text depth=0.45ex] 
{\Ga& \bbA^1_S\times Y & \\ 
\bbA^1_S&  & Y\\
& S& \\}; 
\path[->](a-1-1) edge node[below] {$p_2$\;\;} node[above] {$p_1$\;\;} (a-1-2);
\path[->](a-1-2) edge node[below] {\;$q_1$} (a-2-1);
\path[->](a-1-2) edge node[below] {$q_2$\;} (a-2-3);
\path[->](a-2-1) edge node[below] {$q$\;\;} (a-3-2);
\path[->](a-2-3) edge node[below] {\;\;\;\;\;\;\;$\tau\circ \pi$} (a-3-2);
\end{tikzpicture}
\end{equation}
where $q_1$, $q_2$ denote the projections. Let $Z\overset{\iota}{\to} Y \overset{\sig}{\leftarrow} U$ where $U=Y/Z$ is the open complement.

\begin{lem} \label{Bradeniso} Let $\calA\in D^+(Y,\La)$ be $\bbG_m$-equivariant. Then there is an isomorphism in $D^+(\bbA^1_S,\La)$
\[
q_{1,*}p_{2,*}p_2^*q_2^* (\sig_!\sig^*\calA)\;\simeq\;  j_!j^*q^*\tau_*\pi_*(\sig_!\sig^*\calA)
\]
 where $j\co \bbG_{m,S}\to \bbA^1_S$ denotes the inclusion.
\end{lem}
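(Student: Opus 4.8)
The plan is to rewrite the left hand side as a single $*$-pushforward that is extension-by-zero from $\bbG_{m,S}$ in the $\bbA^1_S$-direction, and then to feed the resulting sheaf into the standard open--closed triangle on $\bbA^1_S\times Y$. Write $\calA'=\sig_!\sig^*\calA$ and set $\calN=p_{1,*}p_2^*q_2^*\calA'$, a complex on $\bbA^1_S\times Y$. Since $q_1\circ p_1=q_1\circ p_2$ (both record the $\bbA^1_S$-coordinate), the functors $q_{1,*}p_{2,*}$ and $q_{1,*}p_{1,*}$ agree, so the left hand side equals $q_{1,*}\calN$; the point of writing it with $p_{1,*}$ is that $p_1$ is proper by Lemma \ref{ebasiclem} ii), so proper base change is available.

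First I would identify $\calN$. Let $j_Y\co \bbG_{m,S}\times Y\hookrightarrow \bbA^1_S\times Y$ and $\kappa\co \{0\}\times Y\hookrightarrow \bbA^1_S\times Y$ be the open and closed immersions lying over $j$ and over the zero section. Over $\bbG_{m,S}$ the maps $p_1,p_2$ are isomorphisms (Lemma \ref{ebasiclem} i)), and under these identifications $q_2\circ p_2\circ p_1^{-1}$ is the action $a\co \bbG_{m,S}\times Y\to Y$; as $U=Y\bslash Z$ is $\bbG_m$-stable the complex $\calA'$ is again $\bbG_m$-equivariant, so smooth base change for the open immersion $j_Y$ together with the equivariance isomorphism gives $j_Y^*\calN\simeq a^*\calA'\simeq \mathrm{pr}_Y^*\calA'$. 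Over the zero section I claim $\kappa^*\calN=0$: proper base change for $p_1$ yields $\kappa^*\calN\simeq (p_1|_{\Ga_0})_*\big((q_2 p_2)^*\calA'|_{\Ga_0}\big)$, where $\Ga_0\subset\Ga$ is the fibre over $0$. By the weight hypothesis in \eqref{wdecom} the third-coordinate map $q_2\circ p_2$ sends $\Ga_0$ topologically into $Z$ (this is precisely the limit computation in the proof of Lemma \ref{ebasiclem}), while $\calA'=\sig_!\sig^*\calA$ has vanishing stalks along $Z$ since $\iota^*\sig_!=0$; hence $(q_2 p_2)^*\calA'|_{\Ga_0}=0$ and $\kappa^*\calN=0$. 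Feeding both facts into the triangle $j_{Y,!}j_Y^*\calN\to\calN\to\kappa_*\kappa^*\calN$ gives $\calN\simeq j_{Y,!}\,\mathrm{pr}_Y^*\calA'=j_{Y,!}j_Y^*q_2^*\calA'$.

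It then remains to compute $q_{1,*}\calN$. I would apply $q_{1,*}$ to the open--closed triangle on $\bbA^1_S\times Y$ for the complex $q_2^*\calA'$,
\[
j_{Y,!}j_Y^*q_2^*\calA'\longto q_2^*\calA'\longto \kappa_*\kappa^*q_2^*\calA'\longto,
\]
whose left term is $\calN$ by the previous step and whose right term is $\kappa_*\calA'$. Now $q_{1,*}q_2^*\calA'\simeq q^*\tau_*\pi_*\calA'$ by smooth base change along the smooth map $q\co\bbA^1_S\to S$ (using $\mathrm{str}_Y=\tau\circ\pi$), whereas $q_{1,*}\kappa_*\calA'\simeq i_{0,*}\tau_*\pi_*\calA'$ because $q_1\circ\kappa$ factors as $\{0\}\times Y\xrightarrow{\mathrm{str}_Y}\{0\}\xrightarrow{i_0}\bbA^1_S$. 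Thus $q_{1,*}\calN$ is the fibre of a morphism $q^*\calB\to i_{0,*}\calB$ with $\calB=\tau_*\pi_*\calA'$; identifying this morphism with the adjunction unit $q^*\calB\to i_{0,*}i_0^*q^*\calB$ (using $i_0^*q^*\simeq\mathrm{id}$), whose fibre is $j_!j^*q^*\calB$, yields the asserted isomorphism.

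The main obstacle is the vanishing $\kappa^*\calN=0$ over the zero section: this is the genuine content of the contraction lemma, and it is exactly where the weight hypothesis \eqref{wdecom}, the properness of $p_1$, and the relation $\iota^*\sig_!=0$ must be used together. Everything else is formal; the only point needing care is the (routine) compatibility of base change with adjunction units that identifies the connecting map of the final triangle with the unit on $\bbA^1_S$.
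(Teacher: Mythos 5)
Your argument is correct, but it establishes the key vanishing by a genuinely different mechanism than the paper. The paper never proves your fibrewise statement $\kappa^*\calN=0$ on $\{0\}\times Y$: it only shows the weaker $i^*\tilde{\calA}=0$ after pushing to $\bbA^1_S$, and it gets this sheaf-theoretically --- the cartesian square \eqref{squareforproof} (which uses $Y^0=Z^0$) converts $p_2^*q_2^*\sig_!$ into $\sig'_!a^*$, equivariance replaces $a^*$ by a projection pullback, the relation $p_1\circ \sig'=j\times\sig$ rewrites everything as $q_{1,*}j_!j^*(q_2^*\sig_!\sig^*\calA)$, and then Corollaries \ref{seccor2} and \ref{seccor1} (i.e.\ the local-acyclicity input on $\bbA^1$) kill the zero-section restriction. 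You instead prove the stronger pointwise vanishing over the zero section by pure support considerations: proper base change for $p_1$ (Lemma \ref{ebasiclem} ii)) plus the fact that $q_2\circ p_2$ maps $|\Ga_0|$ into $|Z|$, where $\calA'=\sig_!\sig^*\calA$ has no stalks. This buys an argument that, within this lemma, avoids Corollaries \ref{seccor1}--\ref{seccor2} entirely, uses equivariance only in the mild form $a^*\calA'\simeq \pr_Y^*\calA'$ on the open locus, and computes the cone of $q_{1,*}\calN\to q^*\calB$ directly as $i_{0,*}\calB$ via $q_1\circ\kappa=i_0\circ(\tau\circ\pi)$, thereby sidestepping the non-properness of $q_1$ that forced the paper's monodromic detour. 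Two caveats. First, the support fact you invoke is slightly stronger than the displayed computation in the proof of Lemma \ref{ebasiclem}, which treats constant points $[y^+:y^-]\in Y(S)$ over an algebraically closed field; since points of the closure $\Ga$ are specializations of points of the locally closed $\Ga_a$, you should run the same weight estimate along a valuation ring realizing such a specialization (the point $y$ now varies in the family, but the estimate $i\,v(\la)+v(e_i)$ works verbatim and uses the hypothesis that all weights of $\calE^+$ exceed all weights of $\calE^-$); note that the paper's own surjectivity claim for $|\Ga|\to|\tilde{\Ga}|$ implicitly needs the same strengthening, so your citation is fair. Second, your ending requires identifying $q_{1,*}$ of the unit $q_2^*\calA'\to\kappa_*\kappa^*q_2^*\calA'$ with the unit $q^*\calB\to i_{0,*}i_0^*q^*\calB$ under smooth base change; the paper's packaging avoids any connecting-map identification by applying the triangle $j_!j^*\to\id\to i_*i^*$ to $\tilde{\calA}$ itself on $\bbA^1_S$, where it suffices to know $j^*\tilde{\calA}\simeq j^*q^*\calB$ and $i^*\tilde{\calA}=0$. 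That identification is standard (compatibility of adjunction units with base change along the cartesian square over $i_0$), and you correctly flagged it, but as written it is genuinely needed in your route, since $i_0^*q_{1,*}\calN=0$ does not follow from $\kappa^*\calN=0$ without it.
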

\begin{proof} Let $\tilde{\calA}=q_{1,*}p_{2,*}p_2^*q_2^* (\sig_!\sig^*\calA)$. By smooth base change $j^*\tilde{\calA}\simeq j^*q^*\tau_*\pi_*(\sig_!\sig^*\calA)$ because $p_2$ is an isomorphism over $\bbG_{m,S}\times Y$ by Lemma \ref{ebasiclem} i). Let $i\co S\to \bbA^1_S$ be the zero section. By considering the distinguished triangle $j_!j^*\tilde{\calA}\to \tilde{\calA}\to i_*i^*\tilde{\calA}\to $, we are reduced to show $i^*\tilde{\calA}=0$. Since $q_1\circ p_1=q_1\circ p_2$, we get
\begin{equation}\label{longiso1}
\tilde{\calA}\;\simeq\; q_{1,*}p_{1,!}p_2^*q_2^* (\sig_!\sig^*\calA)
\end{equation}
because $p_1$ is proper, cf. Lemma \ref{ebasiclem} ii). There is a cartesian diagram of $S$-schemes
\begin{equation}\label{squareforproof}
\begin{tikzpicture}[baseline=(current  bounding  box.center)]
\matrix(a)[matrix of math nodes, 
row sep=1.5em, column sep=2em, 
text height=1.5ex, text depth=0.45ex] 
{\bbG_{m,S}\times U & \Ga \\ 
U & Y, \\}; 
\path[->](a-1-1) edge node[above] {$\sig'$} (a-1-2);
\path[->](a-1-1) edge node[left] {$a$} (a-2-1);
\path[->](a-1-2) edge node[right] {$q_2\circ p_2$} (a-2-2);
\path[->](a-2-1) edge node[above] {$\sig$} (a-2-2);
\end{tikzpicture}
\end{equation}
where $\sig'\co \bbG_{m,S}\times U\to \Ga$, $(\la,u)\mapsto (\la, u, \la\cdot u)$ and $a\co \bbG_{m,S}\times U\to U$, $(\la,u)\mapsto \la\cdot u$ is the action (to prove cartesian recall $Y^0=Z^0$).  
By \eqref{longiso1}, the complex $\tilde{\calA}$ becomes
\begin{equation}\label{longiso2}
 q_{1,*}p_{1,!}(p_2^*q_2^* \sig_!)\sig^*\calA \;\simeq\; q_{1,*}p_{1,!}(\sig'_!a^*)\sig^*\calA\;\simeq\; q_{1,*}p_{1,!}\sig'_!(a^*\sig^*\calA).
\end{equation}
Since $\calA$ and hence $\sig^*\calA$ are $\bbG_m$-equivariant, we have $a^*(\sig^*\calA)\simeq (q'_2)^*(\sig^*\calA)$ where $q_2'=q_2|_{\bbG_{m,S}\times U}\co \bbG_{m,S}\times U\to U$ is the projection. Moreover, $p_1\circ \sig'=j\times \sig$, and \eqref{longiso2} becomes
\begin{equation}\label{longiso3}
q_{1,*}p_{1,!}\sig'_!((q'_2)^*\sig^*\calA)\;\simeq\; q_{1,*}(j\times\sig)_!(q'_2)^*\sig^*\calA\;\simeq\; q_{1,*}j_!j^*(q_2^*\sig_!\sig^*\calA)
\end{equation}
where $j\co \bbG_{m,S}\times Y\to \bbA^1_S\times Y$ denotes the inclusion (by abuse of notation). By Corollary \ref{seccor2}\footnote{Apply the corollary to the $\bbG_m$-structure with respect to $\bbA^1_S$ ignoring the action on $Y$.} and \eqref{longiso3} we get 
\begin{equation}\label{longiso4}
i^*\tilde{\calA}\;\simeq\; i^*q_{1,*}j_!j^*(q_2^*\sig_!\sig^*\calA)\;\simeq\; q_*q_{1,*}j_!j^*(q_2^*\sig_!\sig^*\calA)
\end{equation}
because $q_{1,*}j_!j^*(q_2^*\sig_!\sig^*\calA)$ is $\bbG_m$-monodromic for the standard action on $\bbA^1_S$. Since $q\circ q_1=(\tau\circ \pi)\circ q_2$, we get for \eqref{longiso4} that
\[
i^*\tilde{\calA}\;\simeq\; (\tau\circ \pi)_* q_{2,*}j_!j^*(q_2^*\sig_!\sig^*\calA)\;=\;0.
\]
because $q_{2,*}j_!j^*(q_2^*\sig_!\sig^*\calA)=0$ by Corollary \ref{seccor1}.


\end{proof}

\begin{proof}[Proof of Braden's contraction lemma.] Let $\calA\in D^+(Y,\La)^{\Gmon}$. By induction we may assume that $\calA$ is $\bbG_m$-equivariant. Let $Z\overset{\iota}{\to}Y\overset{\sig}{\leftarrow}U$ as above, and apply $\tau_*\pi_*$ to the associated distinguished triangle
\[
\sig_!\sig^*\calA\to \calA\to \iota_*\iota^*\calA\to.
\] 
We have to show that $\calB=\tau_*\pi_*(\sig_!\sig^*\calA)$ vanishes. Let $q\co\bbA^1_S\to S$ be the structure morphism. Our aim is to construct a morphism
\[
\varphi\co q^*\calB\longto q^* \calB
\] 
in $D^+(\bbA^1_S,\La)$ which is an isomorphism when restricted to the unit section and zero when restricted to the zero section. Then Corollary \ref{seccor3} implies that $\calB=0$. Let us construct $\varphi$. Note that $q^* \calB\;\simeq\; q_{1,*}q_2^*(\sig_!\sig^*\calA)$ by smooth base change applied to \eqref{notationdiag}. The unit $\id\to p_{2,*}p_2^*$ gives a transformation
\begin{equation}\label{endocons}
q_{1,*}q_2^*(\sig_!\sig^*\calA) \longto\; q_{1,*}p_{2,*}p_2^*q_2^*(\sig_!\sig^*\calA).
\end{equation}  
Since $p_2$ is an isomorphism over $\bbG_{m,S}\times Y$ (cf. Lemma \ref{ebasiclem}), it follows that \eqref{endocons} is an isomorphism restricted to $\bbG_{m,S}$. By Lemma \ref{Bradeniso}, we have
\[
q_{1,*}p_{2,*}p_2^*q_2^*(\sig_!\sig^*\calA)\;\simeq\; j_!j^*q^*\tau_*\pi_*(\sig_!\sig^*\calA)\;=\;j_!j^*q^*\calB,
\]
where $j\co \bbG_{m,S}\to \bbA^1_S$ is the inclusion. Composing \eqref{endocons} with the adjunction $j_!j^*q^*\calB\to q^*\calB$ constructs the desired morphism $\varphi$. This proves the proposition.
\end{proof}

\begin{rmk}\label{shriekadjustments} The adjustments for the $!$-case are as follows. By considering the triangle $\iota_!\iota^!\calA\to\calA\to \sig_*\sig^*\calA\to$ it is enough to show that $\calB=\tau_!\pi_!(\sig_*\sig^*\calA)$ vanishes. The aim is to construct a map $\varphi\co q^!\calB\to q^!\calB$ such that $j^*\varphi$ is an isomorphism and $\varphi$ vanishes when $!$-restricted to the zero section. Then Corollary \ref{seccor3} implies that $\calB=0$. As above $q^!\calB\simeq q_{1,!}q_2^!(\sig_*\sig^*\calA)$ by smooth base change. Now the counit $p_{2,!}p_2^!\to \id$ gives a map
\begin{equation}\label{shriekcase}
q_{1,!}p_{2,!}p_2^!q_2^!(\sig_*\sig^*\calA)\to q_{1,!}q_2^!(\sig_*\sig^*\calA),
\end{equation}
and as in Lemma \ref{Bradeniso}, one shows that $q_{1,!}p_{2,!}p_2^!q_2^!(\sig_*\sig^*\calA)\simeq j_*j^*q^!\calB$. Precomposing \eqref{shriekcase} with $q^!\calB\to j_*j^*q^!\calB$ constructs the desired map $\varphi$.
\end{rmk}

\subsection{Linear actions}\label{linearactions} Our argument follows Braden's argument in \cite{Br03}. Let us explain how the contraction lemma (Proposition \ref{contractionlem} above) implies Theorem \ref{Bradenthm} for affine spaces. \smallskip\\
Let $S$ be connected, and let $\calE$ be a locally free  $\calO_S$-module of finite rank with $\bbG_m$-action. Consider the weight decomposition
\begin{equation}\label{lindecom}
\calE=\calE^+\oplus\calE^-\oplus\calE^0,
\end{equation}
where $\calE^0=\calE_0$ is the zero component in the weight decomposition \eqref{gradedvec}, and $\calE^+=\oplus_{i> 0}\calE_i$ and $\calE^-=\oplus_{i< 0}\calE_i$. By the explicit description in \S \ref{theaffinecase}, the hyperbolic localization diagram $\HypLoc{\bbV(\calE)}$ becomes\footnote{Note that $j\co \bbV(\calE^0)\to \bbV(\calE^+\oplus \calE^0)\times_{\bbV(\calE)}\bbV(\calE^-\oplus\calE^0)$ is an isomorphism in this case.}
\begin{equation}\label{affinehyperloc}
\begin{tikzpicture}[baseline=(current  bounding  box.center)]
\matrix(a)[matrix of math nodes, 
row sep=1.5em, column sep=2em, 
text height=1.5ex, text depth=0.45ex] 
{\bbV(\calE^0)&\bbV(\calE^-\oplus\calE^0) & \bbV(\calE^0)\\ 
\bbV(\calE^+\oplus\calE^0)& \bbV(\calE) & \\
\bbV(\calE^0),& &\\}; 
\path[->](a-1-1) edge node[above] {$i^-$} (a-1-2);
\path[->](a-2-1) edge node[below] {$p^+$} (a-2-2);
\path[->](a-1-1) edge node[left] {$i^+$} (a-2-1);
\path[->](a-1-2) edge node[right] {$p^-$} (a-2-2);
\path[->](a-1-2) edge node[above] {$q^-$} (a-1-3);
\path[->](a-2-1) edge node[left] {$q^+$} (a-3-1);
\end{tikzpicture}
\end{equation}
where all maps are induced by the decomposition \eqref{lindecom}. 

\begin{prop} \label{linactprop} 
Let $S$ be a connected scheme. Then Theorem \ref{Bradenthm} holds for $X=\bbV(\calE)$ with a linear $\bbG_m$-action.
\end{prop}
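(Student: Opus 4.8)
The plan is to prove Proposition \ref{linactprop} by reducing the claim about the linear action on $\bbV(\calE)$ to Braden's contraction lemma (Proposition \ref{contractionlem}), which concerns the projective bundle $\bbP(\calE)$. The crucial link is that the affine space $\bbV(\calE)$ (minus the zero component) can be compactified inside a projective bundle, so that the structure map $\tau\circ\pi$ of the contraction setup recovers the hyperbolic localization maps $q^\pm$. Concretely, I would introduce the projective bundle $\bbP(\calE\oplus\calO_S)$ (adding a trivial weight-$0$ line to homogenize), whose open locus where the extra coordinate is nonzero is exactly $\bbV(\calE)$, and whose complementary hyperplane at infinity is $\bbP(\calE)$. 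The weight decomposition \eqref{lindecom} then matches the decomposition \eqref{wdecom} used in \S\ref{secconlem}, with the attractor/repeller of $\bbV(\calE)$ being the restrictions of the $Z\overset{\iota}{\to}Y\overset{\pi}{\to}Z$ maps to the open affine part.

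The key steps, in order, are as follows. First, I would unwind Definition \ref{hyplocfun}: for $X=\bbV(\calE)$ the maps in \eqref{affinehyperloc} are explicit linear projections, so $L^\pm_{X/S}\calA$ can be computed via pull-push along $\bbV(\calE^\pm\oplus\calE^0)$. Since $\calE^0$ contributes only a trivially-acted factor that is common to source and target, I would first quotient out or factor through $\bbV(\calE^0)$, reducing to the case $\calE^0=0$, i.e. $\calE=\calE^+\oplus\calE^-$ with all weights nonzero — this is exactly the weight hypothesis required for Proposition \ref{contractionlem}. Second, I would realize $\bbV(\calE^+)$ and $\bbV(\calE^-)$ as the open attractor/repeller cells inside $Y=\bbP(\calE\oplus\calO)\backslash\bbP(\calE^+\oplus\text{(appropriate part)})$, so that the proper structure morphism $\tau\circ\pi\co Y\to Z=\bbP(\cdots)\to S$ restricts to the affine maps $q^\pm$. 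Third, I would apply Proposition \ref{contractionlem} to identify the transformation \eqref{bradentrafos} with the arrow $L^-_{X/S}\calA\to L^+_{X/S}\calA$ of \eqref{locmorph} — this requires checking that Braden's transformations $\tau_*\pi_*\to\tau_*\iota^*$ (and the $!$-version) are compatible with the adjunction-based construction in the \textbf{Construction} following Definition \ref{hyplocfun}.

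The main obstacle I expect is precisely this third step: the \emph{identification} of the abstractly-constructed natural transformation \eqref{locmorph} (built from base change and the $(j^!,j_*)$-adjunction across $X^+\times_X X^-$) with the concrete contraction-lemma transformations \eqref{bradentrafos} (built from the geometry of $\Ga$ and the section $\iota$). Both are isomorphisms on monodromic complexes by independent arguments, but to conclude that $L^-\to L^+$ is an isomorphism I must know it is \emph{this} transformation, not merely \emph{some} isomorphism, that the contraction lemma computes. I would handle this by a diagram chase verifying that both transformations arise from the same unit/counit maps attached to the common retraction $Z\to Y\to Z$ (equivalently $i^\pm$, $q^\pm$), using $q^\pm\circ i^\pm=\id$ and the compatibility of proper base change on the closure $\Ga$ with the open-closed immersion $j$ of \eqref{hyperdis}. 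A secondary technical point is tracking the noncompact directions: $q^\pm$ is affine but not proper (unlike $\tau\circ\pi$), so I would need the open-closed complement $Z\overset{\iota}{\to}Y\overset{\sig}{\leftarrow}U$ decomposition to isolate the genuinely-affine cell, applying the contraction lemma on the compactification and then restricting along $\sig$, exactly as in the proof of Lemma \ref{Bradeniso}.
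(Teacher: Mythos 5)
Your setup is right in two respects: the reduction to $\calE^0=0$ by viewing $\bbV(\calE^0)$ as the new base is exactly the paper's first move, and the contraction lemma (Proposition \ref{contractionlem}) applied to compactifications inside $\bbP(\calE\oplus\calO_S)$ is indeed the engine of the proof. But your third step — identifying the transformation \eqref{bradentrafos} with the arrow \eqref{locmorph} — is where the argument breaks, and no diagram chase can repair it as stated. The contraction-lemma transformation $\tau_*\pi_*\to\tau_*\iota^*$ lives on the pure $*$-side (and its dual on the pure $!$-side) of a \emph{single} retraction $Z\overset{\iota}{\to}Y\overset{\pi}{\to}Z$, whereas \eqref{locmorph} compares $(q^-)_*(p^-)^!$ with $(q^+)_!(p^+)^*$, mixing $!$- and $*$-operations through the two \emph{different} subspaces $\bbV(\calE^-)$ and $\bbV(\calE^+)$. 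There is no single compactified retraction whose Braden transformation \emph{is} the hyperbolic localization map, so "it is this transformation that the contraction lemma computes" is not a statement one can verify — it is false. Relatedly, your remark that both transformations "are isomorphisms on monodromic complexes by independent arguments" is circular: the isomorphy of \eqref{locmorph} is precisely what Proposition \ref{linactprop} asserts, and at this point only \eqref{bradentrafos} is known to be an isomorphism.

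What the contraction lemma actually delivers (this is the paper's Lemma \ref{contlem}, obtained by applying Proposition \ref{contractionlem} to $\calO_S\oplus\calE^-$ resp.\ $\calE^+\oplus\calO_S$, using the inverse action as in Remark \ref{inverse}) is the isomorphy of the two \emph{outer} maps \eqref{contrafos}, namely $(q^-)_*\to(i^-)^*$ on $\bbV(\calE^-)$ and $(i^+)^!\to(q^+)_!$ on $\bbV(\calE^+)$. After that, the \emph{middle} map $(i^-)^*(p^-)^!\calA\to(i^+)^!(p^+)^*\calA$ still has to be shown to be an isomorphism, and this is a genuine vanishing statement, not a compatibility check: one applies $(i^-)^*(p^-)^!$ to the triangle $j_!j^*\calA\to\calA\to(p^+)_*(p^+)^*\calA$ (with $j$ the open complement of the attractor $\bbV(\calE^+)$ in $\bbV(\calE)$), identifies the third term with $(i^+)^!(p^+)^*\calA$ by base change along the cartesian square in \eqref{affinehyperloc}, and must then prove $(i^-)^*(p^-)^!(j_!j^*\calA)=0$. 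The paper kills this cross term by two further, carefully chosen applications of Proposition \ref{contractionlem} on $Y=\bbP(\calE\oplus\calO_S)\bslash\bbP(\calE^-)$ with $Z=\bbP(\calE^+\oplus\calO_S)$: once for the inverse action, and once for the shifted weight decomposition $\calE^+\oplus(\calO_S\oplus\calE^-)$ on the boundary term, using that the relevant complex is supported away from the hyperplane at infinity. This step — the choice of triangle attached to the complement of the attractor and the two auxiliary decompositions that make the contraction lemma applicable to the cross term — is the mathematical core of the proposition, and it is absent from your proposal; your reference to the decomposition $Z\overset{\iota}{\to}Y\overset{\sig}{\leftarrow}U$ points to the triangle \emph{inside} the proof of the contraction lemma (as in Lemma \ref{Bradeniso}), which is a different layer of the argument and does not produce the needed vanishing.
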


As a benefit of working over a general base $S$, we may and do assume that $\bbV(\calE^0)=S$, i.e. $\calE^0=0$. 

\begin{lem}\label{contlem} Let $\calA$ be a $\bbG_m$-monodromic bounded below complex of $\La$-modules on $\bbV(\calE^-)$ (resp. $\bbV(\calE^+)$). Then the transformation
\[
(q^-)_*\calA\overset{\simeq}{\longto} (i^-)^*\calA\;\;\;\text{(resp. $(i^+)^!\calA\overset{\simeq}{\longto} (q^+)_!\calA$)}
\]
is an isomorphism.
\end{lem}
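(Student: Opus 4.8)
The plan is to recognize Lemma \ref{contlem} as the special case of Braden's contraction lemma (Proposition \ref{contractionlem}) in which the base $Z$ of the contraction is $S$ itself. Since by Remark \ref{inverse} monodromicity is insensitive to replacing the $\bbG_m$-action by its inverse, both assertions of the lemma reduce to a single statement: for a locally free $\calO_S$-module $\calG$ of finite rank on the connected scheme $S$, carrying a linear $\bbG_m$-action all of whose weights have one and the same sign, the two natural transformations of \eqref{bradentrafos}
\[
\pi_*\longto\iota^*\qquad\text{and}\qquad \iota^!\longto\pi_!
\]
are isomorphisms on $D^+(\bbV(\calG),\La)^{\Gmon}$, where $\pi\co\bbV(\calG)\to S$ is the structure morphism and $\iota\co S\to\bbV(\calG)$ the zero section with $\pi\circ\iota=\id$. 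The $*$-transformation will yield the $\bbV(\calE^-)$-assertion $(q^-)_*\to(i^-)^*$ and the $!$-transformation the $\bbV(\calE^+)$-assertion $(i^+)^!\to(q^+)_!$, the sign of the weights deciding the correspondence.

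To produce the contraction data I would pass to the projective completion. Adjoin a trivial line bundle $\calO_S$ of weight $0$ and set, in the notation of \S\ref{secconlem}, $\calE^-_{\mathrm{CL}}=\calO_S$ and $\calE^+_{\mathrm{CL}}=\calG$ (after arranging, by Remark \ref{inverse} if necessary, that the weights of $\calG$ are strictly positive, so the weight hypothesis \eqref{wdecom} holds with $k=1$). Then the base of the contraction is $Z=\bbP(\calE^-_{\mathrm{CL}})=\bbP(\calO_S)=S$, the map $\tau\co Z\to S$ is the identity, and the contracting space is the affine chart $Y=\bbP(\calG\oplus\calO_S)\setminus\bbP(\calG)$ on which the $\calO_S$-coordinate is invertible. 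This chart is $\bbG_m$-equivariantly the total space of a vector bundle over $S$ with linear action, hence is identified with $\bbV(\calG)$; under this identification the contracting map $\pi$ of \S\ref{secconlem} becomes the structure morphism and the section $\iota$ the zero section. The equivalence $Y\cong\bbV(\calG)$ carries $D^+(\bbV(\calG),\La)^{\Gmon}$ to $D^+(Y,\La)^{\Gmon}$, and the transformations \eqref{bradentrafos} furnished by Proposition \ref{contractionlem} coincide with the ones above, both being assembled from the (co)unit of adjunction together with the relation $\pi\circ\iota=\id$. Applying Proposition \ref{contractionlem}, in which $\tau_*=\tau_!=\id$, then delivers both isomorphisms simultaneously.

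The step I expect to require the most care is the equivariant geometric identification of the affine chart $Y=\bbP(\calG\oplus\calO_S)\setminus\bbP(\calG)$ with the vector bundle $\bbV(\calG)$: using the functorial description of $\iota$ and $\pi$ in \S\ref{secconlem} one must check that $Y\to Z$ is genuinely the total space of a vector bundle with a linear $\bbG_m$-action, that $\iota$ and $\pi$ are the honest zero section and projection rather than twists thereof, and---crucially---that the Proj-convention governs the weights so that, after the reduction via Remark \ref{inverse}, the $*$-transformation is matched with the $\bbV(\calE^-)$-statement and the $!$-transformation with the $\bbV(\calE^+)$-statement. Once this bookkeeping is settled the entire analytic content of the lemma is supplied by Proposition \ref{contractionlem}; the statement is precisely the affine counterpart, noted in the footnote to \eqref{lindecom}, of the fact that a linear action makes the hyperbolic localization diagram degenerate.
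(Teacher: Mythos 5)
Your proposal is correct and follows the paper's own proof essentially verbatim: the paper likewise applies Proposition \ref{contractionlem} to $\calO_S\oplus\calE^-$ with the inverse $\bbG_m$-action (resp.\ to $\calE^+\oplus\calO_S$), where $\calO_S$ has weight $0$, so that $Z=\bbP(\calO_S)=S$, $\tau=\id$, and $Y=\bbV(\calE^-)$ (resp.\ $Y=\bbV(\calE^+)$), with Remark \ref{inverse} handling the monodromicity under inversion. Your extra bookkeeping identifying the affine chart $\bbP(\calG\oplus\calO_S)\setminus\bbP(\calG)$ equivariantly with $\bbV(\calG)$ and matching $\iota,\pi$ with the zero section and projection is exactly the verification the paper leaves implicit, and it checks out.
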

\begin{proof} Apply Proposition \ref{contractionlem} to $\calO_S\oplus\calE^-$ (resp. $\calE^+\oplus \calO_S$) where $\calO_S$ is of weight $0$. Then $Z=S$ and $Y=\bbV(\calE^-)$ using the inverse $\bbG_m$-action (resp. $Y=\bbV(\calE^+)$).
\end{proof}

\begin{rmk}
i) Similar lemmas are well known in different contexts, cf. \cite[Prop. 5.3.2]{DG11} and the references cited there. \smallskip\\
ii) Let us sketch a direct proof of Lemma \ref{contlem} which is independent of Proposition \ref{contractionlem}. Blowing up the zero section in $\bbV(\calE^\pm)$ one reduces to the case of ${^\chi\bbA}^1_S$ where $\bbG_m$-acts through a character $\chi\co \bbG_m\to \bbG_m$, $\la\mapsto \la^a$ with $a\not =0$. Inverting the $\bbG_m$-action if necessary we may assume that $a>0$. Consider the $\bbG_m$-equivariant finite flat map
\[
\pi\co\bbA^1_S\longto {^\chi\bbA}^1_S,\;\; x\longmapsto x^a,
\]
where $\bbA^1_S$ is equipped with the standard action. If $a$ in invertible on $S$, then $\pi|_{\bbG_{m,S}}$ is \'etale. If $S$ is an $\bbF_p$-scheme and $a=p$, then $\pi$ is the relative Frobenius. A case analysis shows that $q_*\calA\simeq q_*\pi_*\pi^*\calA$ and $q_!\calA\simeq q_!\pi_!\pi^!\calA$ where $q$ denotes the structure morphism. Hence, we may reduce to $\bbA^1_S$ with the standard action. Then Corollary \ref{seccor2} implies the lemma. 
\end{rmk}

\begin{proof}[Proof of Proposition \ref{linactprop}.] Note $\calE=\calE^+\oplus \calE^-$ (under the assumption $\calE^0=0$). If either $\calE^+=0$ or $\calE^-=0$, then Proposition \ref{linactprop} reduces to Lemma \ref{contlem} above. Hence, we assume the decomposition to be non-trivial. Let $\calA\in D^+(\bbV(\calE),\La)^{\Gmon}$. Let $\bbV(\calE^+)\overset{p^+}{\to}\bbV(\calE)\overset{j}{\leftarrow}\bbV(\calE)\bslash\bbV(\calE^+)$, and consider the associated distinguished triangle
\begin{equation}\label{linactprop1}
j_!j^*\calA\longto \calA\longto (p^+)_*(p^{+})^*\calA\longto.
\end{equation}
Applying $(i^-)^*(p^-)^!$ to \eqref{linactprop1} the right arrow becomes 
\begin{equation}\label{linactprop2}
(i^-)^*(p^-)^!\calA\;\longto\;  (i^-)^*(p^-)^! (p^+)_*(p^{+})^*\calA \;\simeq\;  (i^+)^!(p^{+})^*\calA,
\end{equation}
because the square in \eqref{affinehyperloc} is cartesian.
By Lemma \ref{contlem} it is enough to show that 
\[
(i^-)^*(p^-)^!(j_!j^*\calA)\;=\;0. 
\]
Consider the direct sum $(\calE^+\oplus\calO_S)\oplus\calE^-$ where $\calO_S$ is of weight $0$. Let $Z=\bbP(\calE^+\oplus\calO_S)$, and denote $Y=\bbP(\calE\oplus \calO_S)\bslash \bbP(\calE^-)$. There is a $\bbG_m$-equivariant diagram of $S$-schemes
\begin{equation}\label{linactprop3}
\begin{tikzpicture}[baseline=(current  bounding  box.center)]
\matrix(a)[matrix of math nodes, 
row sep=1.5em, column sep=2em, 
text height=1.5ex, text depth=0.45ex] 
{\bbV(\calE^+)& \bbV(\calE) &  \bbV(\calE^-)& \\
Z & Y & Z & S,\\}; 
\path[->](a-1-1) edge node[above] {\;$p^+$} (a-1-2);
\path[->](a-1-3) edge node[above] {\;$p^-$} (a-1-2);
\path[->](a-1-3) edge node[above] {\;\;\;\;$q^-$} (a-2-4);
\path[->](a-1-1) edge node[right] {} (a-2-1);
\path[->](a-2-1) edge node[above] {$\iota$} (a-2-2);
\path[->](a-1-2) edge node[right] {$\rho$} (a-2-2);
\path[->](a-2-2) edge node[above] {$\pi$} (a-2-3);
\path[->](a-2-3) edge node[above] {$\tau$} (a-2-4);
\end{tikzpicture}
\end{equation}
where $\rho$ is an open immersion, and $i'=\rho\circ p^-$ is a closed immersion. Let $j'\co Y\bslash\bbV(\calE^-)\to Y$ be the open complement. Let $\calB=\rho_!(j_!j^*\calA)$, and consider the distinguished triangle 
\begin{equation}\label{linactprop4}
i'_*(i')^!\calB\longto\calB \longto j'_*(j')^*\calB\longto.
\end{equation}
Applying $(\tau\circ \pi)_*$ the first term in \eqref{linactprop4} becomes
\[
(\tau\circ \pi)_*i'_*(i')^!\calB\;\simeq\; (\underbrace{\tau\circ \pi\circ i'}_{=q^-})_*(p^-)^!\underbrace{\rho^!\rho_!}_{\simeq \id}(j_!j^*\calA)  \;\simeq\; (q^-)_*(p^-)^!(j_!j^*\calA)
\]
which is $(i^-)^*(p^-)^!(j_!j^*\calA)$ by Lemma \ref{contlem}. Let us show that $(\tau\circ \pi)_*$ of the second and third term in \eqref{linactprop4} vanishes.\smallskip\\
{\it (1) $(\tau\circ \pi)_*\calB=0$.} There is a $\bbG_m$-equivariant commutative diagram of $S$-schemes
\[
\begin{tikzpicture}[baseline=(current  bounding  box.center)]
\matrix(a)[matrix of math nodes, 
row sep=1.5em, column sep=2em, 
text height=1.5ex, text depth=0.45ex] 
{\bbV(\calE^+)& \bbV(\calE) &  \bbV(\calE)\bslash\bbV(\calE^+) \\
Z & Y & Y\bslash Z. \\}; 
\path[->](a-1-1) edge node[above] {\;$p^+$} (a-1-2);
\path[->](a-1-3) edge node[above] {\;$j$} (a-1-2);
\path[->](a-1-1) edge node[right] {} (a-2-1);
\path[->](a-2-1) edge node[above] {$\iota$} (a-2-2);
\path[->](a-1-2) edge node[right] {$\rho$} (a-2-2);
\path[->](a-2-3) edge node[above] {$\sig$} (a-2-2);
\path[->](a-1-3) edge node[right] {$\rho$} (a-2-3);
\end{tikzpicture}
\]
Hence, $\calB=\rho_!j_!j^*\calA\simeq \sig_!(\rho_!j^*\calA)$. This gives 
\[
\tau_*\pi_*\calB\;\simeq\; \tau_*\pi_*\sig_!(\rho_!j^*\calA)\;\simeq\;\tau_*\iota^*\sig_!(\rho_!j^*\calA)\;=\;0,
\]  
where we apply Proposition \ref{contractionlem} to the $\bbG_m$-monodromic complex $\sig_!(\rho_!j^*\calA)$ with respect to the inverse $\bbG_m$-action, cf. Remark \ref{inverse}. This shows (1).\smallskip\\
{\it (2) $(\tau\circ \pi)_*(j'_*(j')^*\calB)=0$.} There is a $\bbG_m$-equivariant commutative diagram of $S$-schemes
\[
\begin{tikzpicture}[baseline=(current  bounding  box.center)]
\matrix(a)[matrix of math nodes, 
row sep=1.5em, column sep=2em, 
text height=1.5ex, text depth=0.45ex] 
{\emptyset & \bbV(\calE)\bslash \bbV(\calE^+) &  \bbV(\calE)\bslash (\bbV(\calE^+)\cup \bbV(\calE^-)) \\
Z & Y & Y\bslash \bbV(\calE^-). \\}; 
\path[->](a-1-1) edge node[above] {} (a-1-2);
\path[->](a-1-3) edge node[above] {$j'$} (a-1-2);
\path[->](a-1-1) edge node[right] {} (a-2-1);
\path[->](a-2-1) edge node[above] {$\iota$} (a-2-2);
\path[->](a-1-2) edge node[right] {$\rho\circ j$} (a-2-2);
\path[->](a-2-3) edge node[above] {$j'$} (a-2-2);
\path[->](a-1-3) edge node[right] {$\rho\circ j$} (a-2-3);
\end{tikzpicture}
\] 
where the squares are cartesian. Hence, $(j')^*\calB=(j')^*(\rho\circ j)_!j^*\calA\simeq (\rho\circ j)_!((j')^*j^*\calA)$ by base change. Now $Y\bslash \bbV(\calE^-)=\bbP(\calE\oplus\calO_S)\bslash \bbP(\calE^-\oplus \calO_S)$, and we may consider
\[
\bbP(\calE^+)\overset{\iota'}{\longto}Y\bslash\bbV(\calE^-)\overset{\pi'}{\longto}\bbP(\calE^+)\overset{\tau'}{\longto} S.
\]
Now $\tau\circ \pi\circ j'=\tau'\circ\pi'$ which gives 
\begin{equation}\label{thelastterm}
\tau_* \pi_*j'_*(j')^*\calB\;\simeq\; \tau'_*\pi'_*(\rho\circ j)_!((j')^*j^*\calA)\;\simeq\;\tau'_*(\iota')^*(\rho\circ j)_!((j')^*j^*\calA),
\end{equation}
where we apply Proposition \ref{contractionlem} to the decomposition $\calE^+\oplus(\calO_S\oplus\calE^-)$ and the $\bbG_m$-monodromic complex $(\rho\circ j)_!(j')^*j^*\calA$ (again by considering the inverse of the $\bbG_m$-action). But the last term in \eqref{thelastterm} vanishes because $(\rho\circ j)_!((j')^*j^*\calA)$ lives on $\bbV(\calE)=\bbP(\calE\oplus\calO_S)\bslash \bbP(\calE)$. This shows (2) and proves the proposition.
\end{proof}

\subsection{The affine case}\label{compclosedsec}
Let us explain how Proposition \ref{linactprop} implies Theorem \ref{Bradenthm} (Zariski locally on $S$) for $S$-affine schemes $X$ of finite presentation with $\bbG_m$-action.
 
\begin{lem}\label{embedlem}
Let $S$ be a scheme, and let $X/S$ be a $S$-affine scheme of finite presentation with $\bbG_m$-action. Then, for some $n\geq 0$, there exists Zariski locally on $S$ a $\bbG_m$-equivariant closed immersion $X\to \bbA^n_S$, where $\bbG_m$-acts linear on $\bbA^n_S$.
\end{lem}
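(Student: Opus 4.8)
The plan is to reduce this geometric statement to an elementary fact about graded rings, via the dictionary of \S\ref{theaffinecase} between $\bbG_m$-actions and $\bbZ$-gradings. Since the assertion is Zariski local on $S$, I may replace $S$ by an affine open $\Spec(R)$; then $X=\Spec(B)$ for an $R$-algebra $B$ of finite presentation, and the $\bbG_m$-action is encoded by the comodule structure $B\to B\otimes_R R[t,t^{-1}]$, equivalently by a $\bbZ$-grading $B=\oplus_{i\in\bbZ}B_i$ into weight spaces (this decomposition is available over any base ring, so no connectedness hypothesis is needed at this step). Under this dictionary, a $\bbG_m$-equivariant closed immersion of $X$ into an affine space with linear action is exactly a surjection of graded $R$-algebras $R[x_1,\dots,x_n]\onto B$ in which each variable $x_j$ is homogeneous of some weight $d_j$. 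Hence it suffices to exhibit finitely many homogeneous generators of $B$ as an $R$-algebra.

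First I would use finite type: choose finitely many elements $b_1,\dots,b_m$ generating $B$ over $R$ (finite presentation of $X/S$ is more than enough here, since finite type already suffices to build a closed immersion). Each $b_k$ has a finite expansion $b_k=\sum_i b_{k,i}$ into homogeneous components $b_{k,i}\in B_i$, and the finite collection of all nonzero $b_{k,i}$ still generates $B$ as an $R$-algebra, because it generates every $b_k$. Relabelling, I obtain homogeneous generators $c_1,\dots,c_n$ with $c_j\in B_{d_j}$.

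Next I would produce the linear model: equip $R[x_1,\dots,x_n]$ with the grading $\deg x_j=d_j$, which under the dictionary of \S\ref{theaffinecase} is precisely $\bbA^n_S$ with the linear $\bbG_m$-action of weights $d_1,\dots,d_n$. The assignment $x_j\mapsto c_j$ extends to an $R$-algebra homomorphism respecting the gradings, as $x_j$ and $c_j$ share the weight $d_j$; it is therefore $\bbG_m$-equivariant, and it is surjective because the $c_j$ generate $B$. Passing to spectra yields the desired $\bbG_m$-equivariant closed immersion $X\hookto\bbA^n_S$ over $\Spec(R)$.

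There is no serious obstacle here: the single idea is that a generating set can be made homogeneous by passing to homogeneous components, and finiteness is preserved. The only points requiring care are bookkeeping ones, namely that the weight decomposition makes sense over a possibly disconnected affine base (so that ``homogeneous'' is meaningful) and that a graded surjection corresponds exactly to an equivariant closed immersion with linear action; both are immediate from the affine dictionary of \S\ref{theaffinecase}. The reason the conclusion is only Zariski local is that the integer $n$ and the weights $d_j$ depend on the chosen affine open and need not patch to a single global embedding.
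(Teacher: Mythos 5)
Your proof is correct, and its core step --- decomposing a finite generating set of $B$ into homogeneous components to obtain finitely many homogeneous generators, then mapping a graded polynomial algebra onto $B$ --- is exactly the paper's argument. Where you genuinely diverge is in how you handle a disconnected (or worse, non-locally-connected) base. The paper states the dictionary between $\bbG_m$-actions and $\bbZ$-gradings only for connected $S$ (cf.\ \S\ref{theaffinecase}), and therefore finishes with a noetherian approximation: it writes $S=\lim_i S_i$ with each $S_i$ the spectrum of a finitely generated $\bbZ$-algebra (hence locally connected), and uses the finite-presentation hypothesis to descend $X$ to some $S_i$. You instead observe that the weight decomposition is available over an \emph{arbitrary} ring $R$: a coaction $\rho\co B\to B\otimes_R R[t,t^{-1}]$ yields projections $\rho_i$ via $\rho(b)=\sum_i \rho_i(b)\,t^i$, and the counit and coassociativity axioms force $b=\sum_i\rho_i(b)$ and $\rho_j\circ\rho_i=\delta_{ij}\rho_i$, so that $B=\oplus_i B_i$ with $B_iB_j\subset B_{i+j}$; this is the standard fact that comodules over the diagonalizable group $\bbG_m$ are $\bbZ$-graded modules over any base (SGA3, Exp.\ I). Your route buys two things: it eliminates the limit argument entirely, and it shows the lemma holds under the weaker hypothesis of finite type, since finite presentation enters the paper's proof only through the descent to $S_i$. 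What the paper's route buys is that it never needs to go beyond the connected-base dictionary it actually established in \S\ref{theaffinecase}; if you want your version to be self-contained relative to this paper, you should spell out the comodule computation above rather than cite \S\ref{theaffinecase}, which as written does not cover disconnected $S$. Your closing remark about why the statement is only Zariski local (the rank $n$ and weights $d_j$ vary with the affine open) is also accurate.
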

\begin{proof} Let $S=\Spec(R)$ and $X=\Spec(B)$ be affine. If $S$ is connected, then the assertion of the lemma is equivalent to the existence of a $\bbZ$-graded free $R$-module $E$ of some finite rank $n$ together with a morphism of $\bbZ$-graded $R$-algebras
\[
\text{Sym}^{\otimes}(E)\longto B,
\] 
which is surjective. Let $\{b_i\}_{i\in I}$ be a family of homogenous generators of the $R$-algebra $B$. Let $E=\oplus_{i\in I}R$, where the $i$-th component is given the degree $\deg(b_i)$. Then the morphism $\text{Sym}^{\otimes}(E)\to B$ given by $(r_i)_{i\in I}\mapsto \sum_{i\in I}r_i\cdot b_i$ is surjective and $\bbZ$-graded. Since $B$ is of finite type, the set of homogenous generators can be chosen to be finite. This proves the lemma for $S$ locally connected. In general, write $S=\lim_iS_i$, where $S_i$ is the spectrum of a finitely generated $\bbZ$-algebra (in particular locally connected). Since $X/S$ is of finite presentation, it is defined over some $S_i$, and the lemma follows in general.
\end{proof}

Since push forward under closed immersions is conservative, we are reduced to:

\begin{lem}\label{closedlem}
Let $f\co X\to Z$ be a $\bbG_m$-equivariant closed immersion of $S$-affine schemes of finite presentation. Then there is a commutative (up to natural isomorphism) diagram of transformation of functors from $D(X,\La)$ to $D(Z^0,\La)$ as follows
\[
\begin{tikzpicture}[baseline=(current  bounding  box.center)]
\matrix(a)[matrix of math nodes, 
row sep=1.5em, column sep=2em, 
text height=1.5ex, text depth=0.45ex] 
{(f^0)_*\circ L^-_{X/S}&(f^0)_*\circ L^+_{X/S}\\ 
L^-_{Z/S}\circ f_*& L^+_{Z/S}\circ f_*,\\}; 
\path[->](a-1-1) edge node[above] {} (a-1-2);
\path[->](a-2-1) edge node[below] {} (a-2-2);
\path[->](a-1-1) edge node[left] {$\simeq$} (a-2-1);
\path[->](a-1-2) edge node[right] {$\simeq$}(a-2-2);
\end{tikzpicture}
\]
where the horizontal maps are constructed in \eqref{locmorph}.
\end{lem}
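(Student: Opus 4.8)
The argument is purely formal. I would produce the two vertical isomorphisms by base change and functoriality, and then check commutativity of the square stage by stage in the construction of the horizontal (Braden) maps. Write $\eta_X\co L^-_{X/S}\to L^+_{X/S}$ and $\eta_Z\co L^-_{Z/S}\to L^+_{Z/S}$ for the transformations of \eqref{locmorph}, and $\phi^\pm$ for the vertical isomorphisms to be constructed. Throughout I use that a $\bbG_m$-equivariant closed immersion is proper and of finite presentation, so that all six operations and both base change isomorphisms apply and $(\str)_*\simeq(\str)_!$ holds for it.

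For the geometric input, note that by Lemma \ref{basiclem} ii) applied to the $\bbG_m$-equivariant structure maps of $(\bbA^1_S)^{\pm}$, the morphism $f$ induces a morphism of hyperbolic localization diagrams $\HypLoc{X}\to \HypLoc{Z}$, whose components $f^0\co X^0\to Z^0$, $f^{\pm}\co X^{\pm}\to Z^{\pm}$ and $f^W\co X^+\times_XX^-\to Z^+\times_ZZ^-$ commute with all of $i^{\pm}$, $q^{\pm}$, $p^{\pm}$, $'p^{\pm}$ and $j$. Writing $X=\Spec_S(\calB)$ and $Z=\Spec_S(\calC)$, the immersion $f$ corresponds to a surjection $\calC\onto \calB$ of $\bbZ$-graded $\calO_S$-algebras, and the explicit description in Lemma \ref{affinecase} then shows that $f^0$, $f^{\pm}$ and $f^W$ are again closed immersions, that the squares relating $f^{\pm}$ to $f$ through $p^{\pm}_X,p^{\pm}_Z$ and the square relating $f^W$ to $f^{\pm}$ through $'p^{\pm}_X,'p^{\pm}_Z$ are cartesian, and that the squares relating $f^{\pm}$ to $f^0$ through $q^{\pm}_X,q^{\pm}_Z$ commute.

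I would then construct the vertical isomorphisms. For the $+$-functor, proper base change along the cartesian square $p^+_Z\circ f^+=f\circ p^+_X$ gives $(p^+_Z)^*f_*\simeq (f^+)_*(p^+_X)^*$ (using that $f$ is proper); since $f^+$ and $f^0$ are proper, $(\str)_*\simeq(\str)_!$ for them, and the commuting square $q^+_Z\circ f^+=f^0\circ q^+_X$ yields $(q^+_Z)_!(f^+)_*\simeq (q^+_Z\circ f^+)_!=(f^0\circ q^+_X)_!\simeq (f^0)_*(q^+_X)_!$, whence $\phi^+\co L^+_{Z/S}f_*\simeq (f^0)_*L^+_{X/S}$. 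For the $-$-functor, the base change $(p^-_Z)^!f_*\simeq (f^-)_*(p^-_X)^!$ holds on the same cartesian square for arbitrary $f$ (it is the right‑adjoint transpose of proper base change), and $(q^-_Z)_*(f^-)_*=(q^-_Z\circ f^-)_*=(f^0\circ q^-_X)_*=(f^0)_*(q^-_X)_*$ gives $\phi^-\co L^-_{Z/S}f_*\simeq (f^0)_*L^-_{X/S}$. No boundedness or monodromicity is used, so both $\phi^\pm$ are defined on all of $D(X,\La)$.

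It remains to check commutativity, which is the main obstacle. The horizontal arrows $\eta_X,\eta_Z$ are assembled in \eqref{compD1}–\eqref{contrafos} from four elementary pieces: the unit of $(p^+,(p^+)^*)$; the base change isomorphism on the inner cartesian square $X^+\times_XX^-$; the unit of the $(j^!,j_*)$-adjunction (available since $j$ is an open and closed immersion, Proposition \ref{morprop} iii)); and the contraction transformations \eqref{contrafos} built from the unit and counit for $i^{\pm}$. The plan is to expand both composites $\phi^+\circ (f^0)_*\eta_X$ and $(\eta_Z f_*)\circ \phi^-$ into these pieces and verify commutativity one stage at a time: each stage reduces either to naturality of an adjunction unit or counit with respect to the $f$-base change isomorphisms, or to the compatibility of those isomorphisms with the inner-square base change, i.e.\ to the fact that for two nested cartesian squares the base change of the outer rectangle is the composite of the base changes of the inner squares — this is where $f^W$ enters. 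The only delicate bookkeeping is matching the $(j^!,j_*)$-step against $f^W$ and threading it through the inner base change; all remaining steps are naturality of adjunction (co)units. Once these compatibilities are recorded, the square commutes on all of $D(X,\La)$, as claimed.
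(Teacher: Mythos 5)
Your proposal is correct and takes essentially the same route as the paper: the paper likewise deduces the cube of closed immersions with all squares cartesian from the explicit graded-algebra description of Lemma \ref{affinecase}, builds the vertical isomorphisms by proper base change together with $f^\pm_*=f^\pm_!$ (Lemma \ref{complemtwo} for the contraction pieces \eqref{contrafos}, and the cube for the middle piece \eqref{compD1}), and then verifies commutativity piecewise via naturality of the adjunction (co)units against the base-change isomorphisms, exactly the stage-by-stage check you outline (the paper defers these details to the compatibilities (C1)--(C3) in the proof of Proposition \ref{funcprop}). Your direct construction of $\phi^+$ as an isomorphism via proper base change, rather than as a general transformation later seen to be invertible, is a harmless simplification made possible by $f$ being a closed immersion.
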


The maps $X^0\overset{i^\pm}{\to}X^\pm\overset{q^\pm}{\to}X^0$ induce by \eqref{contrafos} natural transformations of functors from $D(X^\pm,\La)$ to $D(X^0,\La)$ as follows
\begin{equation}\label{closedbc}
(q^-)_*\longto (i^-)^* \;\;\;\;\text{and}\;\;\;\; (i^+)^!\longto (q^+)_!.
\end{equation}

\begin{lem} \label{complemtwo}
Let $X$ and $Z$ be $S$-affine schemes of finite presentation, and let $f\co X\to Z$ be a $\bbG_m$-equivariant closed immersion. There are commutative (up to natural isomorphism) diagrams of transformations of functors from $D(X^\pm,\La)$ to $D(Z^0,\La)$ 
\[
\begin{tikzpicture}[baseline=(current  bounding  box.center)]
\matrix(a)[matrix of math nodes, 
row sep=1.5em, column sep=2em, 
text height=1.5ex, text depth=0.45ex] 
{(q^-)_*(f^-)_*&(i^-)^*(f^-)_* && (i^+)^!(f^+)_*&(q^+)_!(f^+)_*\\ 
(f^0)_*(q^-)_*&(f^0)_*(i^-)^*&& (f^0)_*(i^+)^! &(f^0)_*(q^+)_!,\\}; 
\path[->](a-1-1) edge node[above] {} (a-1-2);
\path[->](a-2-1) edge node[right] {} (a-2-2);
\path[->](a-1-1) edge node[left] {$\simeq$} (a-2-1);
\path[->](a-1-2) edge node[right] {$\simeq$}(a-2-2);
\path[->](a-1-4) edge node[above] {} (a-1-5);
\path[->](a-2-4) edge node[below] {} (a-2-5);
\path[->](a-1-4) edge node[left] {$\simeq$} (a-2-4);
\path[->](a-1-5) edge node[right] {$\simeq$}(a-2-5);
\end{tikzpicture}
\]
where the horizontal arrows are constructed from \eqref{closedbc}.
\end{lem}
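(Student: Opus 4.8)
The plan is to reduce everything to the explicit graded description of Lemma~\ref{affinecase} together with the standard base change isomorphisms of the six functor formalism, cf.~\cite{LZ12}.

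First I would pin down the geometry. Write $Z=\Spec_S(\calB)$ with the $\bbZ$-grading $\calB=\oplus_i\calB_i$ coming from the $\bbG_m$-action, and $X=\Spec_S(\calB/\calI)$ with $\calI$ a homogeneous ideal, so that the $\bbG_m$-equivariant closed immersion $f$ is the quotient map $\calB\to\calB/\calI$. By Lemma~\ref{affinecase} the schemes $Z^0,Z^\pm$ and $X^0,X^\pm$ are cut out by $\calI^0_Z,\calI^\pm_Z$ and by $\calI+\calI^0_Z,\calI+\calI^\pm_Z$ respectively; in particular the induced maps $f^0\co X^0\to Z^0$ and $f^\pm\co X^\pm\to Z^\pm$ are $\bbG_m$-equivariant closed immersions, hence proper, so $(f^0)_*=(f^0)_!$ and $(f^\pm)_*=(f^\pm)_!$. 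Since $\calI^\pm_Z\subset\calI^0_Z$, a one-line computation on rings gives $X^0=X^\pm\times_{Z^\pm}Z^0$, so the squares formed by the maps $i^\pm$ (for $X$ and for $Z$) and $f^0,f^\pm$ are cartesian, whereas the squares formed by $q^\pm$ and $f^0,f^\pm$ merely commute ($q^\pm\circ f^\pm=f^0\circ q^\pm$) by functoriality. For non-connected $S$ the grading still exists via the $\bbG_m$-coaction, or one reduces to connected $S$ using Corollary~\ref{bccor}.

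Next I would construct the four vertical isomorphisms. The left arrow of the first diagram is the canonical identification $(q^-)_*(f^-)_*=(q^-\circ f^-)_*=(f^0\circ q^-)_*=(f^0)_*(q^-)_*$ coming from the commuting square, and dually the right arrow of the second diagram is $(q^+)_!(f^+)_*=(q^+)_!(f^+)_!=(f^0)_!(q^+)_!=(f^0)_*(q^+)_!$, using properness. The right arrow of the first diagram is the proper base change isomorphism $(i^-)^*(f^-)_*\simeq(f^0)_*(i^-)^*$ attached to the cartesian square, an isomorphism because $f^-$ is proper; the left arrow of the second diagram is its companion $(i^+)^!(f^+)_*\simeq(f^0)_*(i^+)^!$, the right adjoint of the unconditional base change $(f^+)^*(i^+)_!\simeq(i^+)_!(f^0)^*$. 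All four are instances of the base change theorems of~\cite{LZ12}.

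The heart of the matter is the commutativity of the two squares, and here the plan is a diagram chase organised around the mate description of the base change map. Recall from~\eqref{closedbc} that, with $\eta\co\id\to(i^-)_*(i^-)^*$ the unit of adjunction, the transformation $(q^-)_*\to(i^-)^*$ is $(q^-)_*\eta$ followed by the identification $(q^-)_*(i^-)_*=\id$ coming from $q^-\circ i^-=\id$. The key point is that the proper base change isomorphism $\gamma\co(i^-)^*(f^-)_*\simeq(f^0)_*(i^-)^*$ is, by definition, the mate under the $((i^-)^*,(i^-)_*)$-adjunction of the map $(f^-)_*\to(i^-)_*(f^0)_*(i^-)^*$ obtained by whiskering $\eta$ with $(f^-)_*$ and using $f^-\circ i^-=i^-\circ f^0$. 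Feeding this identity through $(q^-)_*$ and using $q^-\circ i^-=\id$, one finds that the top-then-right composite of the first square equals $(q^-)_*$ applied to $(f^-)_*\eta$; the left-then-bottom composite is the very same map, only rewritten through the commuting square $q^-\circ f^-=f^0\circ q^-$. Hence the first square commutes, and the second is strictly dual, replacing units by counits and $(i^\pm)^*,(f^\pm)_*$ by $(i^\pm)^!,(f^\pm)_!$ throughout. I expect the only genuine obstacle to be the bookkeeping: matching each base change isomorphism with its mate and keeping track of the canonical identifications $q^\pm\circ i^\pm=\id$, $f^\pm\circ i^\pm=i^\pm\circ f^0$ and $q^\pm\circ f^\pm=f^0\circ q^\pm$. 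No geometric input beyond the cartesian squares of the first step is required.
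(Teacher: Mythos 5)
Your proposal is correct and takes essentially the same route as the paper: the paper likewise uses the explicit graded description of Lemma \ref{affinecase} to see that $f^0$ and $f^\pm$ are closed immersions fitting into the cartesian square $X^0=X^\pm\times_{Z^\pm}Z^0$ (stated there with the roles of $X$ and $Z$ accidentally swapped), constructs the vertical arrows from proper base change together with $(f^\pm)_*=(f^\pm)_!$, and declares the commutativity of the two squares ``straightforward and left to the reader.'' Your mate-and-triangle-identity chase is precisely that omitted verification, carried out correctly.
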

\begin{proof} By the explicit description in Lemma \ref{affinecase} the maps $f^0$ and $f^\pm$ are closed immersions, and we have $Z^0=Z^\pm\times_{X^\pm}X^0$. The vertical maps are constructed from proper base change using that $f^\pm_*=f^\pm_!$.
The commutativity of the functor diagrams is straightforward and left to the reader.
\end{proof}

\begin{proof}[Proof of Lemma \ref{closedlem}.] In view of Lemma \ref{complemtwo}, it is enough to show that $f_*$ commutes with the map $(i^-)^{*} (p^-)^!\to (i^+)^! (p^+)^*$ constructed in \eqref{compD1}. There is a commutative diagram of $S$-schemes
\begin{equation}\label{closedlem1}
\begin{tikzpicture}[baseline=(current  bounding  box.center)]
\matrix(a)[matrix of math nodes, 
row sep=1em, column sep=1em, 
text height=1.5ex, text depth=0.45ex] 
{X^0&& X^- &\\
& Z^0 && Z^- \\
X^+&& X &\\ 
&Z^+&& Z. \\}; 
\path[->](a-1-1) edge node[above] {} (a-1-3) edge node[below] {} (a-2-2) edge (a-3-1);
\path[-stealth](a-1-3) edge (a-2-4) edge [densely dotted] (a-3-3);
\path[-stealth](a-3-1) edge node[left] {} (a-4-2) edge [densely dotted] (a-3-3);	
\path[->](a-2-2) edge (a-4-2) edge (a-2-4);						
\path[->](a-2-4) edge (a-4-4);
\path[->](a-4-2) edge (a-4-4);
\path[->](a-3-3) edge [densely dotted] node[left] {}(a-4-4);
\end{tikzpicture}
\end{equation}
Since both $X$ and $Z$ are affine, the explicit description in Lemma \ref{affinecase} shows that all arrows in \eqref{closedlem1} are closed immersions. In particular they are monomorphisms which implies that all squares in \eqref{closedlem1} are cartesian. Again it is straightforward that the following diagram of transformations is commutative (up to natural isomorphism)
\[
\begin{tikzpicture}[baseline=(current  bounding  box.center)]
\matrix(a)[matrix of math nodes, 
row sep=1.5em, column sep=2em, 
text height=1.5ex, text depth=0.45ex] 
{(f^0)_*(i^-)^{*} (p^-)^!&(f^0)_*(i^+)^! (p^+)^*\\ 
(i^-)^{*} (p^-)^! f_*& (i^+)^! (p^+)^* f_*,\\}; 
\path[->](a-1-1) edge node[above] {} (a-1-2);
\path[->](a-2-1) edge node[below] {} (a-2-2);
\path[->](a-1-1) edge node[left] {$\simeq$} (a-2-1);
\path[->](a-1-2) edge node[right] {$\simeq$}(a-2-2);
\end{tikzpicture}
\]
where the vertical maps are constructed from proper base change, cf. the Proof of Proposition \ref{funcprop} below for more details. This proves the proposition.
\end{proof}

\subsection{End of the proof of Theorem \ref{Bradenthm}}\label{competalesec} Let $S$ be a scheme, and let $X/S$ be a space locally of finite presentations with an \'etale locally linearizable $\bbG_m$-action. Let $\{U_i\to X\}_i$ be a $\bbG_m$-equivariant $S$-affine \'etale covering family. Then $\{U_i^0\to X^0\}_i$ is covering by Theorem \ref{repthm} i). By Lemma \ref{etalelem} below, we reduce to the case that $X$ is $S$-affine. Covering $S$ with affine schemes so that the assertion of Lemma \ref{embedlem} holds, and using Lemma \ref{etalelem} again (for open immesrions), Theorem \ref{Bradenthm} follows from the previous section. It remains to show:

\begin{lem}\label{etalelem}
Let $S$ be a scheme, and let $X/S$ be a space locally of finite presentation with an \'etale locally linearizable $\bbG_m$-action. Let $f\co U\to X$ be a $\bbG_m$-equivariant \'etale morphism with $U$ being $S$-affine. Then there is a commutative (up to natural isomorphism) diagram of transformation of functors from $D(X,\La)$ to $D(U^0,\La)$ as follows
\[
\begin{tikzpicture}[baseline=(current  bounding  box.center)]
\matrix(a)[matrix of math nodes, 
row sep=1.5em, column sep=2em, 
text height=1.5ex, text depth=0.45ex] 
{(f^0)^*\circ L^-_{X/S}&(f^0)^*\circ L^+_{X/S}\\ 
L^-_{U/S}\circ f^*& L^+_{U/S}\circ f^*,\\}; 
\path[->](a-1-1) edge node[above] {} (a-1-2);
\path[->](a-2-1) edge node[below] {} (a-2-2);
\path[->](a-1-1) edge node[left] {$\simeq$} (a-2-1);
\path[->](a-1-2) edge node[right] {$\simeq$}(a-2-2);
\end{tikzpicture}
\]
where the horizontal maps are constructed in \eqref{locmorph}.
\end{lem}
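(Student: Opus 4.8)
The plan is to exploit that $f$, and hence the induced maps $f^0\co U^0\to X^0$ and $f^\pm\co U^\pm\to X^\pm$ furnished by Theorem \ref{repthm}, are all \'etale, so that $(\cdot)^*=(\cdot)^!$ on each of them and every base change map along them is an isomorphism. First I would record that the entire diagram $\HypLoc{U}$ is the pullback of $\HypLoc{X}$ along $f$: by Lemma \ref{bcforzero} the square formed by $f^0,f$ over $X^0\hookrightarrow X$ is cartesian, by Lemma \ref{bcforplus} the square formed by $f^\pm,f^0$ over $q^\pm\co X^\pm\to X^0$ is cartesian, and pasting these (using $(X^\pm)^0=X^0$) shows that every square built from $f,f^0,f^\pm$ together with the structure maps $q^\pm,p^\pm,i^\pm$ is cartesian.

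Next I would construct the two vertical isomorphisms. For the $+$-functor the square $q^+_U,q^+_X,f^+,f^0$ is cartesian, so base change for $(\cdot)_!$ gives $(f^0)^*(q^+_X)_!\simeq (q^+_U)_!(f^+)^*$; composing with $(f^+)^*(p^+_X)^*=(p^+_U)^*f^*$ (functoriality of $*$-pullback along the commuting square $p^+_X\circ f^+=f\circ p^+_U$) yields $(f^0)^*L^+_{X/S}\simeq L^+_{U/S}\circ f^*$. For the $-$-functor, smooth base change along the \'etale map $f^0$ gives $(f^0)^*(q^-_X)_*\simeq (q^-_U)_*(f^-)^*$, while $(f^-)^*(p^-_X)^!=(f^-)^!(p^-_X)^!=(p^-_X\circ f^-)^!=(f\circ p^-_U)^!=(p^-_U)^!f^*$ since $f^-$ and $f$ are \'etale; together these give $(f^0)^*L^-_{X/S}\simeq L^-_{U/S}\circ f^*$.

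It remains, and this is the main content, to check that these isomorphisms intertwine the transformation \eqref{locmorph} on $X$ with the one on $U$. As in the proof of Lemma \ref{closedlem}, I would split \eqref{locmorph} into the outer maps \eqref{contrafos} and the core map \eqref{compD1}. Compatibility of $(f^\pm)^*$ with $(q^-)_*\to (i^-)^*$ and $(i^+)^!\to (q^+)_!$ is the pullback analogue of Lemma \ref{complemtwo}: it follows from smooth base change on the $*$-side and from $(\cdot)^*=(\cdot)^!$ on the \'etale maps together with functoriality of $!$-pullback on the $!$-side, the squares over $i^\pm$ being cartesian and $q^\pm\circ i^\pm=\id$ being preserved by pullback. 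Reducing to the core, I would then verify that $(f^\pm)^*$ commutes with \eqref{compD1}: the unit of $(p^+)^*\dashv (p^+)_*$ and the $(j^!,j_*)$-adjunction pull back correctly because $j$ pulls back to the corresponding open and closed immersion $j_U$ (Proposition \ref{morprop} iii)) under $f^0$, and the base change isomorphism $(p^-)^!(p^+)_*\simeq ('p^-)_*('p^+)^!$ appearing in \eqref{compD1} is compatible with the further \'etale base change along $f^0$ by transitivity of base change under pasting of cartesian squares.

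The main obstacle is precisely this last bookkeeping: one must check that the internal base change isomorphism of \eqref{compD1}, the two adjunction (co)units, and the $(j^!,j_*)$-step all stay coherent after composition with $(f^0)^*$. Conceptually this is automatic, since pullback along an \'etale map commutes with the full six-operation formalism because $(\cdot)^*=(\cdot)^!$ and every base change map along it is invertible; but making it rigorous amounts to the same pasting-of-cartesian-squares diagram chase carried out for Proposition \ref{funcprop}, which I would follow verbatim.
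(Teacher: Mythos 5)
Your construction of the two vertical isomorphisms and of the compatibility with the outer transformations \eqref{contrafos} is correct and matches the paper's Lemma~\ref{complemone}: it only uses the genuinely cartesian squares $U^0=U\times_X X^0$ (Lemma~\ref{bcforzero}), $U^\pm=U^0\times_{X^0}X^\pm$ (Lemma~\ref{bcforplus}), $U^0=U^\pm\times_{X^\pm}X^0$, together with $f^*\simeq f^!$ for \'etale maps and composition functoriality. But your opening claim --- that pasting these squares shows \emph{every} square built from $f,f^0,f^\pm$ and the structure maps $q^\pm,p^\pm,i^\pm$ is cartesian, i.e.\ that $\HypLoc{U}\simeq\HypLoc{X}\times_X U$ --- is false: the squares over $p^\pm$ fail. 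Take $X=\bbA^1_S$ with the standard action and $U=\bbG_{m,S}\hookrightarrow X$, which is $S$-affine, equivariant and \'etale; then $U^+=\emptyset$ while $U\times_X X^+=\bbG_{m,S}\ne\emptyset$. The same example shows your assertion that ``every base change map along an \'etale morphism is invertible'' is wrong for non-cartesian squares: here $f^*(p^+)_*\calA=\calA|_{\bbG_m}$ while $(p^+_U)_*(f^+)^*\calA=0$. One can likewise check that the lateral face $U^+\times_U U^-\to (X^+\times_XX^-)\times_{X^-}U^-$ is not cartesian in general (e.g.\ $U=\bbP^1\setminus\{0\}\hookrightarrow X=\bbP^1$), so the step where you justify compatibility of the inner base change isomorphism $(p^-)^!(p^+)_*\simeq ({}'p^-)_*({}'p^+)^!$ ``by transitivity of base change under pasting of cartesian squares'' rests on squares that do not exist.

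This gap is not cosmetic, because your fallback --- following the proof of Proposition~\ref{funcprop} verbatim --- is unavailable: that proposition's standing hypothesis is precisely $\HypLoc{X'}\simeq\HypLoc{X}\times_X X'$, which as just shown fails for \'etale $f\co U\to X$; this is exactly why the paper proves Lemmas~\ref{etalelem} and~\ref{closedlem} by separate arguments instead of invoking Proposition~\ref{funcprop} (see the remark following that proposition). The paper's actual proof of the core-step compatibility avoids all base change over the $p$-squares: it transports the unit $\id\to (p^+)_*(p^+)^*$ on $X$ through the unit $\id\to f_*f^*$, rewrites $f_*(p^+_U)_*(p^+_U)^*f^*\simeq (p^+_X)_*(f^+)_*(f^+)^*(p^+_X)^*$ using only composition isomorphisms, strips $f_*$ by adjunction, and then applies $(i^-)^*(p^-)^!$; the $(j^!,j_*)$-step is controlled by the cartesian square $\coprod U^0\to X^0$ over $U^+\times_UU^-\to X^+\times_XX^-$ from \eqref{bcopen} (a consequence of Lemma~\ref{bcforzero}, with $j_U$ an isomorphism since $U$ is affine), never by a base change along the non-cartesian $p$-faces. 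So to repair your argument you should replace the final paragraph by this adjunction chase; the rest of your proposal can stand.
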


Again, let us consider the case of the natural transformations $(q^-)_*\to (i^-)^*$ and $(i^+)^!\to (q^+)_!$ first.

\begin{lem} \label{complemone}
Let $f\co U\to X$ as in Lemma \ref{etalelem}. There are commutative (up to natural isomorphism) diagrams of transformations of functors from $D(X^\pm,\La)$ to $D(U^0,\La)$ 
\[
\begin{tikzpicture}[baseline=(current  bounding  box.center)]
\matrix(a)[matrix of math nodes, 
row sep=1.5em, column sep=2em, 
text height=1.5ex, text depth=0.45ex] 
{(f^0)^*(q^-)_*&(f^0)^*(i^-)^* && (f^0)^*(i^+)^!&(f^0)^*(q^+)_!\\ 
(q^-)_*(f^-)^*&(i^-)^*(f^-)^*&& (i^+)^!(f^+)^*&(q^+)_!(f^+)^*,\\}; 
\path[->](a-1-1) edge node[above] {} (a-1-2);
\path[->](a-2-1) edge node[below] {} (a-2-2);
\path[->](a-1-1) edge node[left] {$\simeq$} (a-2-1);
\path[->](a-1-2) edge node[right] {$\simeq$}(a-2-2);
\path[->](a-1-4) edge node[above] {} (a-1-5);
\path[->](a-2-4) edge node[below] {} (a-2-5);
\path[->](a-1-4) edge node[left] {$\simeq$} (a-2-4);
\path[->](a-1-5) edge node[right] {$\simeq$}(a-2-5);
\end{tikzpicture}
\]
where the horizontal arrows are constructed from \eqref{contrafos}.
\end{lem}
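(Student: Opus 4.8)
The plan is to handle the two displayed diagrams in parallel, observing that in each the vertical arrows are formal base change isomorphisms, so that the only content is the commutativity. Since $f\co U\to X$ is $\bbG_m$-equivariant and \'etale, Theorem \ref{repthm} shows that the induced maps $f^0\co U^0\to X^0$ and $f^\pm\co U^\pm\to X^\pm$ are again \'etale, and Proposition \ref{morprop} i) shows that the maps $i^\pm$ are closed immersions. By Lemma \ref{bcforplus}, applied to $X^+$ and (via the inverse $\bbG_m$-action) to $X^-$, one has $U^\pm=U^0\times_{X^0}X^\pm$. Consequently the square with vertices $U^\pm, X^\pm, U^0, X^0$ and maps $f^\pm, f^0, q^\pm_U, q^\pm_X$ is cartesian, and the section $i^\pm_U$ is the base change of $i^\pm_X$ along $f^0$, so that the square with the same corners but the maps $i^\pm_U, i^\pm_X$ is cartesian as well. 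In particular $i^\pm_X\circ f^0=f^\pm\circ i^\pm_U$ and $q^\pm_X\circ f^\pm=f^0\circ q^\pm_U$.

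First I would pin down the vertical isomorphisms. In the $-$ diagram the left vertical $(f^0)^*(q^-)_*\simeq (q^-)_*(f^-)^*$ is smooth base change for the cartesian $q^-$-square, legitimate because $f^0$ is \'etale, hence smooth (note $q^\pm$ itself is only affine, not proper, by Proposition \ref{morprop} ii), so the smoothness of $f^0$ is genuinely needed); the right vertical $(f^0)^*(i^-)^*\simeq (i^-)^*(f^-)^*$ is the pseudofunctoriality isomorphism of $(-)^*$ induced by $i^-_X\circ f^0=f^-\circ i^-_U$. In the $+$ diagram the right vertical $(f^0)^*(q^+)_!\simeq (q^+)_!(f^+)^*$ is base change for lower-shriek along the cartesian $q^+$-square, which holds for arbitrary cartesian squares, and the left vertical $(f^0)^*(i^+)^!\simeq (i^+)^!(f^+)^*$ combines the pseudofunctoriality of $(-)^!$ with the canonical identifications $(f^0)^!\simeq (f^0)^*$ and $(f^+)^!\simeq (f^+)^*$ for the \'etale maps $f^0$ and $f^+$. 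All of these are available in the required generality by Liu--Zheng \cite{LZ12}.

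The substance of the lemma is the commutativity of the two squares, and I expect this to be the only real obstacle. Recall from \eqref{contrafos} that the transformation $(q^-)_*\to (i^-)^*$ is obtained by applying $(q^-)_*$ to the unit $\id\to (i^-)_*(i^-)^*$ and using $q^-\circ i^-=\id$, and dually that $(i^+)^!\to (q^+)_!$ is obtained by applying $(q^+)_!$ to the counit $(i^+)_!(i^+)^!\to \id$ and using $q^+\circ i^+=\id$. Granting the vertical isomorphisms, commutativity then reduces to the compatibility of the base change isomorphisms above with the proper base change isomorphism $(f^-)^*(i^-_X)_*\simeq (i^-_U)_*(f^0)^*$ (resp. the shriek base change isomorphism $(f^+)^*(i^+_X)_!\simeq (i^+_U)_!(f^0)^*$) attached to the cartesian $i^\pm$-square, and with the unit (resp. counit) of the relevant adjunction. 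This is exactly the Beck--Chevalley bookkeeping: in a six functor formalism the unit and counit of an adjunction are compatible with the base change $2$-isomorphisms, so after unwinding the definitions the chase collapses onto the identities $q^\pm\circ i^\pm=\id$ and the two commutativities recorded in the first paragraph. As in the closed-immersion case of Lemma \ref{complemtwo} the verification is then routine; the one point requiring care, and the reason the \'etale case must be written separately, is simply to use smooth base change (for $f^0$) and the identification $(-)^!\simeq(-)^*$ for \'etale maps where Lemma \ref{complemtwo} used properness of $f$.
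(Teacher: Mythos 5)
Your proof is correct and takes essentially the same route as the paper: both arguments establish that the $q^\pm$- and $i^\pm$-squares are cartesian from Lemmas \ref{bcforplus} and \ref{bcforzero} (you obtain the $i^\pm$-square by pulling back the section along the cartesian $q^\pm$-square, where the paper instead applies Lemma \ref{bcforzero} to $U^\pm\to X^\pm$ using $(X^\pm)^0=X^0$ --- an equivalent derivation), and then construct the vertical isomorphisms via smooth base change together with the identifications $(f^0)^*\simeq(f^0)^!$ and $(f^\pm)^*\simeq(f^\pm)^!$ for the \'etale maps $f^0$, $f^\pm$. The Beck--Chevalley compatibility check you sketch for the commutativity of the two squares is exactly the verification the paper declares straightforward and leaves to the reader, so your write-up is if anything slightly more detailed on the one point the paper omits.
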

\begin{proof} By Lemma \ref{bcforzero} we have $U^0=U^\pm\times_{X^\pm}X^0$ (use that $(X^\pm)^0=X^0$), and by Lemma \ref{bcforplus} we have $U^\pm=U^0\times_{X^0}X^\pm$. The diagrams of $S$-spaces in question are cartesian and we can use smooth base change to construct the vertical maps. Use that $f^0$ and $f^\pm$ are \'etale and hence, $(f^0)^*\simeq(f^0)^!$ and $(f^\pm)^*\simeq (f^\pm)^!$.
The commutativity of the functor diagrams is straight forward and left to the reader. 
\end{proof}

\begin{proof}[Proof of Lemma \ref{etalelem}.]  In view of Lemma \ref{complemone} it is enough to show that $f^*$ commutes with the transformation $(i^-)^*(p^-)^!\to (i^+)^!(p^+)^*$ in \eqref{locmorph}. There is a commutative diagram of $S$-spaces
\[
\begin{tikzpicture}[baseline=(current  bounding  box.center)]
\matrix(a)[matrix of math nodes, 
row sep=1em, column sep=1em, 
text height=1.5ex, text depth=0.45ex] 
{U^0&& U^- &\\
& X^0 && X^- \\
U^+&& U &\\ 
&X^+&& X, \\}; 
\path[->](a-1-1) edge node[above] {} (a-1-3) edge node[below] {} (a-2-2) edge (a-3-1);
\path[-stealth](a-1-3) edge (a-2-4) edge [densely dotted] (a-3-3);
\path[-stealth](a-3-1) edge node[left] {} (a-4-2) edge [densely dotted] (a-3-3);	
\path[->](a-2-2) edge (a-4-2) edge (a-2-4);						
\path[->](a-2-4) edge (a-4-4);
\path[->](a-4-2) edge (a-4-4);
\path[->](a-3-3) edge [densely dotted] node[left] {}(a-4-4);
\end{tikzpicture}
\]
where the $U$-square is cartesian, and $i^\pm\co X^0\to X^\pm$ factors through $j\co X^0\to X^+\times_XX^-$. The maps $f^0$ and $f^\pm$ are \'etale, cf. proof of Lemma \ref{complemone}. Let us explain how one checks commutativity of the diagram 
\[
\begin{tikzpicture}[baseline=(current  bounding  box.center)]
\matrix(a)[matrix of math nodes, 
row sep=1.5em, column sep=2em, 
text height=1.5ex, text depth=0.45ex] 
{(f^0)^*(i^-)^{*} (p^-)^!&(f^0)^*(i^+)^! (p^+)^*\\ 
(i^-)^{*} (p^-)^! f^*& (i^+)^! (p^+)^* f^*,\\}; 
\path[->](a-1-1) edge node[above] {} (a-1-2);
\path[->](a-2-1) edge node[below] {} (a-2-2);
\path[->](a-1-1) edge node[left] {$\simeq$} (a-2-1);
\path[->](a-1-2) edge node[right] {$\simeq$}(a-2-2);
\end{tikzpicture}
\]
where the vertical maps are constructed using $f^*\simeq f^!$ and the same for $f^0$ and $f^\pm$. Using the units $\id\to f_*f^*$ and $\id\to (p^+)_*(p^+)^*$ one constructs a commutative diagram
\[
\begin{tikzpicture}[baseline=(current  bounding  box.center)]
\matrix(a)[matrix of math nodes, 
row sep=1em, column sep=2em, 
text height=1.5ex, text depth=0.45ex] 
{\id & (p^+)_* (p^+)^*& \\ 
f_*f^*& f_*(p^+)_*(p^+)^*f^* & (p^+)_*(f^+)_*(f^+)^*(p^+)^*.\\}; 
\path[->](a-1-2) edge[bend left=10] node[above] {} (a-2-3);
\path[->](a-1-1) edge node[below] {} (a-1-2);
\path[->](a-1-1) edge node[left] {} (a-2-1);
\path[->](a-2-1) edge node[right] {}(a-2-2);
\path[->](a-2-2) edge node[above] {$\simeq$}(a-2-3);
\end{tikzpicture}
\]
Using adjunction for $f_*$ and applying $(i^-)^*(p^-)^!$ to the resulting diagram we get
\vspace{0.3cm}
\[
\begin{tikzpicture}[baseline=(current  bounding  box.center)]
\matrix(a)[matrix of math nodes, 
row sep=1em, column sep=2em, 
text height=1.5ex, text depth=0.45ex] 
{\overbrace{(i^-)^*(p^-)^!f^*}^{\simeq (f^0)^*(i^-)^*(p^-)^!} & \overbrace{(i^-)^*(p^-)^!f^*}^{\simeq (f^0)^*(i^-)^*(p^-)^!}(p^+)_*(p^+)^*& (f^0)^*(i^+)^!(p^+)^* \\ 
(i^-)^*(p^-)^!f^* & (i^-)^*(p^-)^!(p^+)_*(p^+)^*f^* & (i^+)^!(p^+)^*f^*,\\}; 
\path[->](a-1-1) edge node[below] {} (a-1-2);
\path[->](a-1-2) edge node[above] {$\psi$} (a-1-3);
\path[->](a-1-2) edge node[above] {} (a-2-2);
\path[->](a-1-1) edge node[left] {} (a-2-1);
\path[->](a-2-1) edge node[right] {}(a-2-2);
\path[->](a-2-2) edge node[above] {$\simeq$}(a-2-3);
\path[->](a-1-3) edge node[right] {$\simeq$}(a-2-3);
\end{tikzpicture}
\]
where $\psi$ is given by the $(j^!,j_*)$-adjunction. The composition of the arrows at the bottom (resp. the top) gives the desired map, and one checks that the right square commutes. This proves the lemma.
\end{proof}

\section{Functorial properties}
As a benefit of working over a general base scheme $S$, we are able to investigate the behaviour of hyperbolic localization with respect to base changes $S'\to S$. The situation is as good as one could hope. This is due to strong symmetry properties induced by the isomorphism in Braden's theorem.

\subsection{Base change}\label{bcforBraden}
Recall the following formalism. If $F,F',G,G':\calC\to \calD$ are functors between categories $\calC$ and $\calD$, and $\psi: F\to G$, $\phi:F'\to G'$ are natural transformations. Then a natural $2$-morphism $\Phi: \psi\Rightarrow \phi$ is a tuple $\Phi=(\Phi_F,\Phi_G)$ of natural transformations $\Phi_F:F\to F'$ and $\Phi_G:G\to G'$ such that the diagram
\[
\begin{tikzpicture}[baseline=(current  bounding  box.center)]
\matrix(a)[matrix of math nodes, 
row sep=1.5em, column sep=2em, 
text height=1.5ex, text depth=0.45ex] 
{F&F' \\ 
G&G' \\}; 
\path[->](a-1-1) edge node[above] {$\Phi_F$} (a-1-2);
\path[->](a-2-1) edge node[above] {$\Phi_G$} (a-2-2);
\path[->](a-1-1) edge node[left] {$\psi$} (a-2-1);
\path[->](a-1-2) edge node[right] {$\phi$} (a-2-2);
\end{tikzpicture}
\]
is commutative up to natural isomorphism. There is the obvious notion of a natural $2$-isomorphism. If $X/S$ is a space locally of finite presentation with an \'etale locally linearizable $\bbG_m$-action, then by \eqref{locmorph} above there is a natural transformation of functors on unbounded derived categories from $D(X,\La)$ to $D(X^0,\La)$ as follows
\begin{equation}\label{hallo}
\phi_{X}\co L^-_{X/S}\longto L^+_{X/S}.
\end{equation}
We abbreviate $D(X)=D(X,\La)$ and $D(X^0)=D(X^0,\La)$ in the following. 

\begin{prop}\label{funcprop}
Let $S$ be a scheme, and let $X/S$ be a space locally of finite presentation with an \'etale locally linearizable $\bbG_m$-action. Let $f\co X'\to X$ be a $\bbG_m$-equivariant morphism of $S$-spaces. Assume that for the hyperbolic localization diagrams, cf. Definition \ref{hyperbolicloc},
\[
\HypLoc{X'}\overset{\simeq}{\longto}\HypLoc{X}\times_XX'. 
\]
Let $f^0\co (X')^0\to X^0$ be the induced $S$-morphism on the spaces of fixed points. \smallskip\\
i) There are natural $2$-morphisms as follows.\smallskip\\
\phantom{h}\;\;(a) $\phi_X\circ f_*\Rightarrow f^0_*\circ \phi_{X'}$ as $2$-morphism of functors $D(X')\to D(X^0)$;\smallskip\\
\phantom{h}\;\;(b) $(f^0)^*\circ \phi_X\Rightarrow \phi_{X'}\circ f^*$ as $2$-morphism of functors $D(X)\to D((X')^0)$;\smallskip\\
ii) If $f$ is locally of finite presentation, then there are natural $2$-morphisms as follows.\smallskip\\
\phantom{h}\;\;(a) $f^0_!\circ \phi_{X'}\Rightarrow \phi_{X}\circ f_!$ as $2$-morphism of functors $D(X')\to D(X^0)$; \smallskip\\
\phantom{h}\;\;(b) $\phi_{X'}\circ f^!\Rightarrow (f^0)^!\circ \phi_X$ as $2$-morphism of functors $D(X)\to D((X')^0)$.\smallskip\\
iii) If $f$ is proper (resp. $f$ is smooth), then i).(a) and ii).(a) (resp. i).(b) and ii).(b)) are inverse to each other. \smallskip\\
iv) All transformations in i) and ii) restricted to the category $D^+(X)^{\Gmon}$ are natural 2-isomorphisms.
\end{prop}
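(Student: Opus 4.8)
The plan is to exploit the hypothesis $\HypLoc{X'}\simeq\HypLoc{X}\times_X X'$, which makes each of the two squares relating $p^\pm$ (resp.\ $q^\pm$) for $X'$ and $X$ cartesian, with base-changed maps $f^\pm\co(X')^\pm\to X^\pm$ and $f^0\co(X')^0\to X^0$. First I would produce, in each of the four cases, the two component transformations of the asserted $2$-morphism (one over $L^-$ and one over $L^+$), and observe that in each case exactly one of them is assembled from a genuine base change isomorphism together with functoriality of a single operation, and is therefore invertible on all objects. Writing $q^\pm,p^\pm$ for the maps of either $\HypLoc{X}$ or $\HypLoc{X'}$ as dictated by context, the invertible components are
\begin{gather*}
\text{(i.a)}:\quad (q^-)_*(p^-)^!f_*\simeq (q^-)_*(f^-)_*(p^-)^!= f^0_*(q^-)_*(p^-)^!,\\
\text{(i.b)}:\quad (f^0)^*(q^+)_!(p^+)^*\simeq (q^+)_!(f^+)^*(p^+)^*=(q^+)_!(p^+)^*f^*,\\
\text{(ii.a)}:\quad f^0_!(q^+)_!(p^+)^*=(q^+)_!(f^+)_!(p^+)^*\simeq (q^+)_!(p^+)^*f_!,\\
\text{(ii.b)}:\quad (q^-)_*(p^-)^!f^!=(q^-)_*(f^-)^!(p^-)^!\simeq (f^0)^!(q^-)_*(p^-)^!,
\end{gather*}
using proper base change $g^*h_!\simeq h'_!g'^*$, its right-adjoint companion $g^!h_*\simeq h'_*g'^!$, and functoriality of one of $(\str)_*,(\str)^*,(\str)_!,(\str)^!$. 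The remaining component is built from the corresponding exchange transformation (for instance $(p^+)^*f_*\to (f^+)_*(p^+)^*$ in case (i.a)), which need not be invertible in general.

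Next I would verify that each such pair is a genuine natural $2$-morphism, i.e.\ that its defining square commutes up to natural isomorphism. This is the diagram chase: one feeds the two components into the four-step construction \eqref{compD1} of $\phi$ and checks compatibility step by step --- with the unit $\id\to(p^+)_*(p^+)^*$, with the base change isomorphism on the cartesian square $X^+\times_X X^-$, with the $(j^!,j_*)$-adjunction (legitimate since $j$ is an open and closed immersion by Proposition \ref{morprop} iii)), and with the transformations \eqref{contrafos}. Each of these is an instance of the standard fact that base change maps commute with adjunction units and counits and are functorial under composition of cartesian squares; the computation is identical in spirit to the ones carried out in the proofs of Lemmas \ref{closedlem} and \ref{etalelem}, now performed over the whole diagram $\HypLoc{X}$. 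I expect this bookkeeping to be the main obstacle, since the four elementary compatibilities must be threaded correctly through the composite transformation, and the distinct variances ($\ast$ versus $!$, pushforward versus pullback) have to be kept in step.

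For iii), if $f$ is proper then $f_*=f_!$, $f^0_*=f^0_!$ and $f^\pm_*=f^\pm_!$, so all the exchange transformations entering (i.a) and (ii.a) become isomorphisms by proper base change; the $\ast$- and $!$-versions are mutually inverse by their construction, whence (i.a) and (ii.a) are inverse $2$-morphisms. If $f$ is smooth of relative dimension $d$, then $f^0$ and $f^\pm$ are smooth of the same relative dimension (being base changes of $f$), so $(\str)^!\simeq(\str)^*\langle d\rangle$ throughout; smooth base change turns the exchange transformations in (i.b) and (ii.b) into isomorphisms, and the purity identification $\langle d\rangle$ matches the two, so they too are inverse.

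Finally, for iv) I would invoke Theorem \ref{Bradenthm}: on the relevant bounded below monodromic category $\phi_X$ and $\phi_{X'}$ are isomorphisms. By Lemma \ref{funcmonlem} each of $f_*,f^*,f_!,f^!$ preserves $\bbG_m$-monodromicity, and each also preserves boundedness below (for $f^!$ this follows from the local factorization of a morphism locally of finite type into a smooth morphism and a closed immersion, both of which preserve $D^+$). Hence for $\calA$ in the appropriate $D^+(\str)^{\Gmon}$ both vertical arrows of the $2$-morphism square are isomorphisms by Braden's theorem; since one of the two horizontal components is already an isomorphism on all objects, commutativity of the square forces the other horizontal component to be an isomorphism as well. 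Thus every transformation in i) and ii) is a natural $2$-isomorphism on $D^+(\str)^{\Gmon}$.
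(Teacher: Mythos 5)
Your proposal is correct and follows essentially the same route as the paper: in each case one component of the $2$-morphism is the base change isomorphism coming from the cartesian squares $\HypLoc{X'}=\HypLoc{X}\times_XX'$ (the paper's $\Phi^-$ in case i.(a)), the other is assembled from units/counits, commutativity of the square is checked by threading the compatibilities (the paper's (C1)--(C3)) through the construction \eqref{compD1}, part iii) follows from proper/smooth base change, and part iv) is exactly the paper's three-out-of-four argument via Theorem \ref{Bradenthm} and Lemma \ref{funcmonlem}. The only organizational difference is that you construct all four $2$-morphisms directly, whereas the paper derives i.(b) from i.(a) by adjunction and ii) by formally interchanging $*$ with $!$ --- and the paper itself notes that the direct construction yields the same $2$-morphism, so this is not a substantive divergence.
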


\begin{rmk} Some sort of base change hypothesis seems to be necessary in order to construct the $2$-morphisms in i) and ii), cf. also Lemmas \ref{etalelem} and \ref{closedlem} above. Note that by Corollary \ref{bccor} the base change hypothesis on $X'\to X$ is satisfied if $X'=X\times S'$ for some morphism of schemes $S'\to S$. 
\end{rmk}

Recall the morphisms $i^\pm, q^\pm, p^\pm$ in the definition of $\HypLoc{X}$, cf. Definition \ref{hyperbolicloc}. Then 
\[
\phi_X\co L^-_{X/S}=(q^-)_*(p^-)^!\longto (q^+)_!(p^+)^*=L^+_{X/S}. 
\]
In the proof of the proposition, we use without further mentioning the following fact. The transformation $\phi_{X}$ can also be constructed as follows: Apply $(p^-)^!$ to the unit of the adjunction $\id \to (p^+)_*(p^+)^*$. Proceeding as in the construction of \eqref{locmorph}, we obtain a natural transformation
\begin{equation}\label{altercons}
(p^-)^!\longto (p^-)^!(p^+)_*(p^+)^*\longto (i^-)_*(i^+)^!(p^+)^*\longto (i^-)_*(q^+)_!(p^+)^*,
\end{equation}
where the last arrow comes from the transformation $(i^+)^!\to(q^+)_!$. Now apply $(q^-)_*$ to \eqref{altercons} and use $q^-\circ i^-=\id$ to obtain a transformation $(q^-)_*(p^-)^!\to (q^+)_!(p^+)^*$ which agrees up to natural isomorphism with $\phi_X$.

\begin{proof}[Proof of Proposition \ref{funcprop}] Let us denote the corresponding morphisms for $\HypLoc{X'}$ also by $i^\pm, q^\pm, p^\pm$ (by abuse of notation). For i).(a), we have to construct natural transformations $\Phi^+$ and $\Phi^-$ of functors
\begin{equation}\label{fundiagD1}
\begin{tikzpicture}[baseline=(current  bounding  box.center)]
\matrix(a)[matrix of math nodes, 
row sep=2em, column sep=2em, 
text height=1.5ex, text depth=0.45ex] 
{(q^-)_{*}(p^-)^!f_*& (f^0)_*(q^{-})_{*}(p^{-})^! \\ 
(q^+)_{!}(p^+)^*f_*& (f^0)_*(q^{+})_{!}(p^{+})^*\\}; 
\path[->](a-1-1) edge node[above] {$\Phi^-$} (a-1-2);
\path[->](a-2-1) edge node[above] {$\Phi^+$} (a-2-2);
\path[->](a-1-1) edge node[left] {$\phi_X f_*$} (a-2-1);
\path[->](a-1-2) edge node[right] {$(f^0)_* \phi_{X'}$} (a-2-2);
\end{tikzpicture}
\end{equation}
such that \eqref{fundiagD1} commutes up to natural isomorphism. By assumption both squares in the diagram
\begin{equation}\label{doublecart}
\begin{tikzpicture}[baseline=(current  bounding  box.center)]
\matrix(a)[matrix of math nodes, 
row sep=2em, column sep=2em, 
text height=1.5ex, text depth=0.45ex] 
{(X')^0& (X')^\pm & X' \\ 
X^0& X^\pm & X\\}; 
\path[->](a-1-2) edge node[above] {$q^\pm$} (a-1-1);
\path[->](a-1-2) edge node[above] {$p^\pm$} (a-1-3);
\path[->](a-2-2) edge node[below] {$q^\pm$} (a-2-1);
\path[->](a-2-2) edge node[below] {$p^\pm$} (a-2-3);
\path[->](a-1-1) edge node[right] {$f^0$} (a-2-1);
\path[->](a-1-2) edge node[right] {$f^\pm$} (a-2-2);
\path[->](a-1-3) edge node[right] {$f$} (a-2-3);
\end{tikzpicture}
\end{equation}
are cartesian. \smallskip\\
\emph{Construction of $\Phi^-$:}
Base change applied to the right square in \eqref{doublecart} gives a natural isomorphism $(p^-)^!f_*\simeq f^-_{*} ({p^-})^!$. Apply $(q^-)_{*}$ to this isomorphism, and use that the left square in \eqref{doublecart} commutes. This constructs the natural isomorphism $\Phi^-\co (q^-)_{*}(p^-)^!f_*\to f^0_*(q^-)_{*}(p^-)^!$.\smallskip\\
\emph{Construction of $\Phi^+$:} Applying $(p^+)^*$ to the adjuction $f^*f_*\to \id$ gives
\[
(f^+)^*(p^+)^*f_*\simeq (p^+)^*f^*f_*\longto (p^+)^*.
\]
By adjunction, we obtain a transformation $(p^+)^*f_*\to (f^+)_*(p^+)^*$. Applying $(q^+)_{!}$ it remains to construct a transformation
\begin{equation}\label{lasteq}
(q^+)_{!}(f^+)_{*}\longto (f^0)_*(q^+)_{!}.
\end{equation}
Applying $(q^+)_{!}$ to the adjunction $(f^+)^*(f^+)_{*}\to \id$ gives
\[
(f^0)^*(q^+)_{!}(f^+)_{*}\simeq (q^+)_{!}(f^+)^*(f^+)_{*}\longto (q^+)_{!},
\]
where the isomorphism follows from the base change theorem applied to the left cartesian square in \eqref{doublecart}. Using adjunction this concludes the construction of \eqref{lasteq}, and hence the construction of $\Phi^+\co (q^+)_{!}(p^+)^*f_*\to (f^0)_*(q^+)_{!}(p^+)^*$. \smallskip\\
\emph{Diagram \eqref{fundiagD1} commutes up to natural isomorphism:} We claim that it is enough to check the commutativity of the following diagrams, whose construction is explained below. Each isomorphism in (C1)-(C3) below is deduced by base change using our assumption $\HypLoc{X'}=\HypLoc{X}\times_XX'$. \smallskip\\
\emph{Compatibility 1 (C1):}
\[
\begin{tikzpicture}[baseline=(current  bounding  box.center)]
\matrix(a)[matrix of math nodes, 
row sep=1.5em, column sep=2em, 
text height=1.5ex, text depth=0.45ex] 
{f_* & f_* & \\ 
(p^+)_{*}(p^+)^*f_* & f_*(p^+)_{*}(p^+)^*  &(p^+)_{*}(f^+)_{*}(p^+)^*\\}; 
\path[->](a-1-1) edge  (a-2-1);
\path[->](a-2-1) edge  (a-2-2);
\path[->](a-1-1) edge node[above] {$\id$} (a-1-2);
\path[->](a-1-2) edge (a-2-2);
\path[->](a-2-2) edge node[above] {$\simeq$} (a-2-3);
\end{tikzpicture}
\]
\emph{Compatibility 2 (C2):}
\[
\begin{tikzpicture}[baseline=(current  bounding  box.center)]
\matrix(a)[matrix of math nodes, 
row sep=1.5em, column sep=2em, 
text height=1.5ex, text depth=0.45ex] 
{(p^-)^!(p^+)_{*}(f^+)_{*} & (f^-)_{*}(p^-)^! (p^+)_{*}&\\ 
(i^-)_{*}(i^+)^!(f^+)_{*} & (f^-)_{*}(i^-)_{*}(i^+)^! & (i^-)_{*}(i^+)^!(f^+)_* \\}; 
\path[->](a-1-1) edge  (a-2-1);
\path[->](a-2-1) edge node[above] {$\simeq$}  (a-2-2);
\path[->](a-1-1) edge node[above] {$\simeq$} (a-1-2);
\path[->](a-1-2) edge (a-2-2);
\path[->](a-2-2) edge node[above] {$\simeq$} (a-2-3);
\end{tikzpicture}
\]
\emph{Compatibility 3 (C3):}
\[
\begin{tikzpicture}[baseline=(current  bounding  box.center)]
\matrix(a)[matrix of math nodes, 
row sep=1.5em, column sep=2em, 
text height=1.5ex, text depth=0.45ex] 
{(i^+)^!(f^+)_{*}& (f^0)_*(i^+)^! \\ 
(q^+)_{!}(f^+)_{*}& (f^0)_*(q^+)_{!}\\}; 
\path[->](a-1-1) edge (a-2-1);
\path[->](a-2-1) edge (a-2-2);
\path[->](a-1-1) edge node[above] {$\simeq$} (a-1-2);
\path[->](a-1-2) edge (a-2-2);
\end{tikzpicture}
\]
We give the recipe how (C1)-(C3) imply the commutativity of \eqref{fundiagD1}\footnote{The author recommends a big sheet of paper to check the commutativity.}. Apply $(p^-)^!$ from the left to (C1). Using base change, the upper right of (C1) may be replaced by $(f^-)_{*}(p^-)^!$. Next apply (C2) to the lower right, and extend the resulting diagram at the very left by the commutative diagram
\[
\begin{tikzpicture}[baseline=(current  bounding  box.center)]
\matrix(a)[matrix of math nodes, 
row sep=1.5em, column sep=2em, 
text height=1.5ex, text depth=0.45ex] 
{(p^-)^!(p^+)_{*}(p^+)^*f_*& (p^-)^!(p^+)_{*}(f^+)_{*}(p^+)^*\\ 
(i^-)_{*}(i^+)^!(p^+)^*f_*& (i^-)_{*}(i^+)^!(f^+)_{*}(p^+)^*,\\}; 
\path[->](a-1-1) edge (a-2-1);
\path[->](a-2-1) edge (a-2-2);
\path[->](a-1-1) edge (a-1-2);
\path[->](a-1-2) edge (a-2-2);
\end{tikzpicture}
\]
which is derived from $(p^+)^*f_*\to (f^+)_{*}(p^+)^*$ (cf. the construction of $\Phi^+$) and $(p^-)^!(p^+)_{*}\to (i^-)_{*}(i^+)^!$ (cf. the middle arrow in \eqref{compD1}). Now apply $(q^-)_{*}$ to everything, and use $(q^-)_{*}(i^-)_{*}=\id$. By base change, the morphism at the top is $\Phi^-$. At the lower right apply (C3), and extend the resulting diagram at the very left by the commutative diagram
\[
\begin{tikzpicture}[baseline=(current  bounding  box.center)]
\matrix(a)[matrix of math nodes, 
row sep=1.5em, column sep=2em, 
text height=1.5ex, text depth=0.45ex] 
{(i^+)^!(p^+)^*f_*& (i^+)^!(f^+)_*(p^+)^*\\ 
(q^+)_{!}(p^+)^*f_*& (q^+)_{!}(f^+)_*(p^+)^*.\\}; 
\path[->](a-1-1) edge (a-2-1);
\path[->](a-2-1) edge (a-2-2);
\path[->](a-1-1) edge (a-1-2);
\path[->](a-1-2) edge (a-2-2);
\end{tikzpicture}
\]
The morphism at the bottom is $\Phi^+$. This implies the commutativity of \eqref{fundiagD1}. It remains to show (C1)-(C3).\smallskip\\
\emph{Proof of (C1):} There is a commutative diagram
\[
\begin{tikzpicture}[baseline=(current  bounding  box.center)]
\matrix(a)[matrix of math nodes, 
row sep=1.5em, column sep=2em, 
text height=1.5ex, text depth=0.45ex] 
{(p^+)^*f_* & (p^+)^*f_*\\ 
(p^+)^*f_* & (f^+)_*(p^+)_*,\\}; 
\path[->](a-1-1) edge  node[left] {$\id$} (a-2-1);
\path[->](a-2-1) edge  (a-2-2);
\path[->](a-1-1) edge node[above] {$\id$} (a-1-2);
\path[->](a-1-2) edge (a-2-2);
\end{tikzpicture}
\]
where $(p^+)^*f_*\to (f^+)_*(p^+)_*$ is the morphism defined in the construction of $\Phi^+$ above. By $((p^+)^*,(p^+)_*)$-adjunction applied to the vertical arrows and using $(p^+)_*(f^+)_*=f_*(p^+)_*$ (at the lower right), we obtain (C1). \smallskip\\
\emph{Proof of (C2):} The vertical arrows in (C2) are constructed as in \eqref{compD1} above using the $(j^!,j_*)$-adjunction. Then (C2) follows by base change from the fact that the following diagram
\[
\begin{tikzpicture}[baseline=(current  bounding  box.center)]
\matrix(a)[matrix of math nodes, 
row sep=1.5em, column sep=2em, 
text height=1.5ex, text depth=0.45ex] 
{(X')^0& (X')^+\times_{X'}(X')^- & X' \\ 
X^0& X^+\times_{X}X^- & X\\}; 
\path[->](a-1-1) edge (a-1-2);
\path[->](a-1-2) edge (a-1-3);
\path[->](a-2-1) edge (a-2-2);
\path[->](a-2-2) edge (a-2-3);
\path[->](a-1-1) edge node[right] {$f^0$} (a-2-1);
\path[->](a-1-2) edge  (a-2-2);
\path[->](a-1-3) edge node[right] {$f$} (a-2-3);
\end{tikzpicture}
\]
is cartesian.\smallskip\\
\emph{Proof of (C3):} Using the adjunctions $(i^+)_!(i^+)^!\to \id$ and $(f^+)^*(f^+)_*\to \id$, one constructs a commutative diagram 
\[
\begin{tikzpicture}[baseline=(current  bounding  box.center)]
\matrix(a)[matrix of math nodes, 
row sep=1.5em, column sep=2em, 
text height=1.5ex, text depth=0.45ex] 
{(f^+)^*(f^+)_*(i^+)_!(i^+)^! &  (i^+)_!(i^+)^!\\\ 
(f^+)^*(f^+)_*& \id.\\}; 
\path[->](a-1-1) edge (a-2-1);
\path[->](a-2-1) edge (a-2-2);
\path[->](a-1-1) edge (a-1-2);
\path[->](a-1-2) edge (a-2-2);
\end{tikzpicture}
\]
Now apply $(q^+)_!$ to the diagram. Using base change (at the left), additionally $(i^+)_!=(i^+)_*$ (at the upper left) and $(q^+)_!(i^+)_!=\id$ (at the top), the diagram becomes
\[
\begin{tikzpicture}[baseline=(current  bounding  box.center)]
\matrix(a)[matrix of math nodes, 
row sep=1.5em, column sep=2em, 
text height=1.5ex, text depth=0.45ex] 
{(f^0)^*(i^+)^!(f^+)_{*}& (i^+)^! \\ 
(f^0)^*(q^+)_{!}(f^+)_{*}& (q^+)_{!},\\}; 
\path[->](a-1-1) edge (a-2-1);
\path[->](a-2-1) edge (a-2-2);
\path[->](a-1-1) edge (a-1-2);
\path[->](a-1-2) edge (a-2-2);
\end{tikzpicture}
\]
and we obtain (C3) by $((f^0)^*, (f^0)_*)$-adjunction. The bottom is the natural transformation \eqref{lasteq} and the vertical arrows are deduced from $(i^+)^!\to (q^+)_{!}$.

This proves (C1)-(C3), and hence part i).(a). The $2$-morphism in part i).(b) is constructed from part i).(a) as follows. By adjunction we get a $2$-morphism $(f^0)^*\phi_X f_*\Rightarrow \phi_{X'}$. Applying $f^*$ from the right gives $(f^0)^*\phi_X f_*f^*\Rightarrow \phi_{X'}f^*$. Now define the $2$-morphism in part i).(b) as the composition
\[
(f^0)^*\phi_X\Rightarrow(f^0)^*\phi_X f_*f^*\Rightarrow \phi_{X'}f^*,
\]
where the first arrow is deduced from the adjunction morphism $\id\to f_*f^*$. Note that directly constructing $(f^0)^*\phi_X\Rightarrow \phi_{X'}f^*$ results in the same $2$-morphism. This shows part i).\smallskip\\ 
Part ii).(a) follows from part i).(a) by formally interchanging all $*$ with $!$, inverting all arrows and interchanging all $+$ with $-$. Part ii).(b) follows again from part ii).(a) by adjunction. This shows part ii).\smallskip\\
Now if $f$ is proper (resp. smooth), then $f^0$ and $f^\pm$ are proper (resp. smooth) by base change. In this case, both $\Phi^+$ and $\Phi^-$ are deduced from proper (resp. smooth) base change and hence are isomorphisms and the corresponding transformations are inverse to each other. This shows part iii).\smallskip\\
Let $\calA$ be a $\bbG_m$-monodromic bounded below complex. Then $f_*(\calA)$ is $\bbG_m$-monodromic, and both transformations $\phi_Xf_*(\calA)$ and $(f^0)_*\phi_{X'}(\calA)$ in \eqref{fundiagD1} are isomorphisms. The transformation $\Phi^-$ in \eqref{fundiagD1} is deduced from base change, hence an isomorphism. Then three morphisms in \eqref{fundiagD1} are isomorphisms, and $\Phi^+$ needs to be an isomorphism as well. The cases of $f^*$ and $f_!$, $f^!$ are proven similarly: in the suitable diagrams three out of four transformations are isomorphisms and hence the remaining transformation needs to be an isomorphism. This implies part iv) and the proposition follows.
\end{proof}

\subsection{Commutation with nearby cycles}\label{nbhccommute}
Let $\calO$ be a henselian discrete valuation ring with field of fractions $F$ and residue field $k$. Let $\sF$ be a separable closure of $F$, and denote by $\bar{\calO}$ the integral closure of $\calO$ in $\sF$. Let $\bk$ be the residue field of $\bar{\calO}$ (which is a separable closure of $k$). 
Let $S=\Spec(\calO)$, $s=\Spec(k)$, $\eta=\Spec(F)$, $\bar{S}=\Spec(\bar{\calO})$, $\bar{s}=\Spec(\bk)$, $\bar{\eta}=\Spec(\sF)$. This gives the $7$-tuple $(S,s,\eta,\bar{S},\bar{s},\bar{\eta},\Ga)$, where $\Ga=\Gal(\sF/F)$ is the Galois group. For a space $X/S$, there is a commutative diagram
\[
\begin{tikzpicture}[baseline=(current  bounding  box.center)]
\matrix(a)[matrix of math nodes, 
row sep=1.5em, column sep=2em, 
text height=1.5ex, text depth=0.45ex] 
{X_{\bar{\eta}}& X_{\bar{S}} & X_{\bar{s}} &\text{cartesian above}& \bar{\eta} & \bar{S} &\bar{s}\\ 
X_\eta& X & X_s &&\eta & S & s.\\}; 
\path[->](a-1-1) edge node[above] {$\bar{j}$} (a-1-2)
			edge (a-2-1);
\path[->](a-2-1) edge node[above] {$j$} (a-2-2);
\path[->](a-1-3) edge node[above] {$\bar{i}$} (a-1-2);
\path[->](a-1-2) edge (a-2-2);
\path[->](a-1-3) edge (a-2-3);
\path[->](a-2-3) edge node[above] {$i$} (a-2-2);

\path[->](a-1-5) edge (a-1-6)
			edge (a-2-5);
\path[->](a-2-5) edge (a-2-6);
\path[->](a-1-7) edge (a-1-6);
\path[->](a-1-6) edge (a-2-6);
\path[->](a-1-7) edge (a-2-7);
\path[->](a-2-7) edge (a-2-6);
\end{tikzpicture}
\]
Let $\La=\bbZ/n$ with $n>1$ invertible on $S$. By [SGA 7, XIII], there is the functor of nearby cycles
\begin{align*}
\Psi_X\co D(X_\eta,\La)&\longto D(X_s\times_S\eta,\La),\\
\calA &\longmapsto \bar{i}^*\bar{j}_*\calA_{\bar{\eta}}
\end{align*}
where $\bar{j}_*$ denotes the derived push forward, and $D(X_s\times_S\eta,\La)$ is as in [SGA 7, XIII] the derived category of $((X_{\bar{s}})_{\text{\'et}}, \La)$-modules with a continuous action of $\Ga$ compatible its action on $X_{\bar{s}}$. For a morphism $S$-morphism $f\co Y\to X$ we get functors $f_*$, $f^*$ and $f_!$, $f^!$ (if $f$ is locally of finite type) on the category $D(X_s\times_S\eta,\La)$ as in [SGA 7, XIII 2.1.6, 2.1.7] satisfying the usual adjointness properties and functorialities with respect to $\Psi$.

If $X/S$ is equipped with a $\bbG_{m}$-action, then, for any $S'\to S$, the scheme $X_{S'}$ is equipped with the induced $\bbG_{m}$-action (by base change). If the action is \'etale locally linearizable, there is for any $\calA\in D(X_{S'},\La)$ the arrow of $D(X_{S'}^0,\La)$ defined in \eqref{locmorph}
\[
L_{S'}^-\calA\longto L_{S'}^+\calA,
\]
where $L_{S'}^\pm=L_{X_{S'}/{S'}}^\pm$ denote the hyperbolic localization functors, cf. Definition \ref{hyplocfun}.  

\begin{thm}\label{nbhcthm} Let $S$ be the spectrum of a henselian discrete valuation ring, and let $X/S$ be a space of finite type with an \'etale locally linearizable $\bbG_{m}$-action. Then, for $\calA\in D^+(X_\eta,\La)$, there is a commutative diagram of arrows in $D(X_s\times_S\eta,\La)$ 
\[
\begin{tikzpicture}[baseline=(current  bounding  box.center)]
\matrix(a)[matrix of math nodes, 
row sep=1.5em, column sep=2em, 
text height=1.5ex, text depth=0.45ex] 
{ L^-_{\bar{s}}\circ\Psi_X(\calA)& \Psi_{X^0}\circ L^-_\eta(\calA)\\ 
L^+_{\bar{s}}\circ \Psi_X(\calA)&\Psi_{X^0}\circ L^+_\eta(\calA),\\ }; 
\path[->](a-1-2) edge (a-1-1); 
\path[->](a-1-1) edge  (a-2-1); 
\path[->](a-2-1) edge (a-2-2);
\path[->](a-1-2) edge  (a-2-2);
\end{tikzpicture}
\]
and all arrows are isomorphisms if $\calA$ is $\bbG_m$-monodromic. 
\end{thm}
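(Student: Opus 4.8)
The plan is to exhibit the nearby cycles functor as a composite of base changes and then transport the functoriality of the Braden transformation proved in Proposition \ref{funcprop}. Writing $\rho\co X_{\bar\eta}\to X_\eta$ for the projection induced by $\bar\eta\to\eta$, and recalling $\bar j\co X_{\bar\eta}\to X_{\bar S}$, $\bar i\co X_{\bar s}\to X_{\bar S}$ from \S\ref{nbhccommute}, one has $\Psi_X=\bar i^*\bar j_*\rho^*$, and likewise $\Psi_{X^0}=(\bar i^0)^*(\bar j^0)_*(\rho^0)^*$ once we identify $(X_{?})^0=(X^0)_{?}$ for $?\in\{\bar\eta,\bar S,\bar s\}$ via Corollary \ref{bccor}. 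Each of $\rho$, $\bar j$, $\bar i$ is, after restriction to $X$, a base change along a morphism of the seven-tuple $(S,s,\eta,\bar S,\bar s,\bar\eta,\Ga)$; in particular each is $\bbG_m$-equivariant and, by Corollary \ref{bccor}, satisfies the base change hypothesis $\HypLoc{\cdot}\simeq\HypLoc{\cdot}\times_{\cdot}(\cdot)$ demanded by Proposition \ref{funcprop}. Although $\bar S\to S$ and $\bar\eta\to\eta$ are not of finite type, the functor $\Psi$ only involves $\bar i^*$, $\bar j_*$ and $\rho^*$, so part i) of Proposition \ref{funcprop} is all that is required; part ii) never enters.

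I would first record the two horizontal arrows of the square as the standard functorialities of nearby cycles. For $L^-=(q^-)_*(p^-)^!$ these are the composite $\Psi_{X^0}(q^-)_*(p^-)^!\to(q^-)_*\Psi_{X^-}(p^-)^!\to(q^-)_*(p^-)^!\Psi_X$ built from the base change maps $\Psi\,(q^-)_*\to(q^-)_*\Psi$ and $\Psi\,(p^-)^!\to(p^-)^!\Psi$, and for $L^+=(q^+)_!(p^+)^*$ the composite $(q^+)_!(p^+)^*\Psi_X\to(q^+)_!\Psi_{X^+}(p^+)^*\to\Psi_{X^0}(q^+)_!(p^+)^*$ built from $(p^+)^*\Psi\to\Psi\,(p^+)^*$ and the canonical isomorphism $(q^+)_!\Psi_{X^+}\simeq\Psi_{X^0}(q^+)_!$ expressing the commutation of nearby cycles with $f_!$ (here $q^\pm$ and $p^\pm$ are of finite type by Theorem \ref{repthm} iii)). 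All these constructions are $\Ga$-equivariant, so the resulting square lives in $D(X_s\times_S\eta,\La)$.

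The commutativity of the square amounts to the assertion that the Braden transformation $\phi_X$ of \eqref{hallo} is compatible with $\Psi$. Since $\Psi$ is assembled from the base changes $\rho$, $\bar j$, $\bar i$, and since Proposition \ref{funcprop} i).(b) (for the pullbacks $\rho$ and $\bar i$) and i).(a) (for the pushforward $\bar j$) furnish, on each of these morphisms, a $2$-morphism comparing $\phi$ before and after the corresponding operation, I would paste these three comparisons to obtain the compatibility of $\phi$ with $\Psi=\bar i^*\bar j_*\rho^*$. I expect the one genuine nuisance to be the bookkeeping of variances: the $f_*$-comparison attached to the pushforward $\bar j$ runs in the sense $L\,\bar j_*\to\bar j_*L$, opposite to the $f^*$-comparisons attached to the pullbacks, so aligning the pasted diagram with the nearby cycles functorialities of the previous paragraph---and threading the base change isomorphisms used to build $\phi$ in \eqref{compD1} through those implicit in $\Psi$---is where the argument has to be carried out by hand. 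This is the diagram chase foreseen in \S\ref{secintro2}; it is routine but should be done on a large sheet of paper.

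It remains to see that all four arrows are isomorphisms when $\calA$ is $\bbG_m$-monodromic. The key point is that $\Psi_X$ preserves both monodromicity and boundedness below: by Lemma \ref{funcmonlem} i) the operations $\rho^*$, $\bar j_*$ and $\bar i^*$ send $\bbG_m$-monodromic complexes to $\bbG_m$-monodromic complexes, so $\Psi_X\calA=\bar i^*\bar j_*\rho^*\calA$ is bounded below and monodromic whenever $\calA\in D^+(X_\eta,\La)^{\Gmon}$. Consequently the two vertical arrows, namely $\Psi_{X^0}$ applied to $\phi_{X_\eta}(\calA)$ and $\phi_{X_{\bar s}}$ applied to $\Psi_X\calA$, are isomorphisms by Braden's theorem (Theorem \ref{Bradenthm}) for $X_\eta/\eta$ and for $X_{\bar s}/\bar s$ respectively, while each of the three pasted $2$-morphisms is a $2$-isomorphism on monodromic complexes by Proposition \ref{funcprop} iv). Hence the horizontal comparison arrows are isomorphisms as well, and the square consists entirely of isomorphisms. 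This yields Theorem \ref{nbhcthm}, and hence the Corollary of the introduction.
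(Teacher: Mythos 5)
Your proposal is correct and follows essentially the same route as the paper: decompose $\Psi_X=\bar{i}^*\bar{j}_*(\str)_{\bar{\eta}}$, paste the comparisons of Proposition \ref{funcprop} i) for $\bar{j}_*$ and $\bar{i}^*$ (the components that must be inverted being unconditional base-change isomorphisms), and conclude in the monodromic case via Lemma \ref{funcmonlem}, Theorem \ref{Bradenthm} and Proposition \ref{funcprop} iv). The only cosmetic difference is at the pullback along $\bar{\eta}\to\eta$, which you treat by applying Proposition \ref{funcprop} i).(b) directly (legitimate, since part i) needs no finiteness and Corollary \ref{bccor} supplies the base-change hypothesis), whereas the paper obtains the identification $L^\pm_{\bar{\eta}}\circ(\str)_{\bar{\eta}}\simeq(\str)_{\bar{\eta}}\circ L^\pm_{\eta}$ from Proposition \ref{funcprop} iii) together with a limit argument over the finite \'etale stages.
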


\begin{rmk} The transformation $L^-_{\bar{s}}\to L^+_{\bar{s}}$ is defined on the category $D(X_s\times_S\eta,\La)$ since its construction in \eqref{locmorph} is purely formal using adjointness properties.
\end{rmk}

\begin{proof} By Proposition \ref{funcprop} i), there is a commutative (up to natural isomorphism) diagram of transformations
\[
\begin{tikzpicture}[baseline=(current  bounding  box.center)]
\matrix(a)[matrix of math nodes, 
row sep=1.5em, column sep=2em, 
text height=1.5ex, text depth=0.45ex] 
{ L^-_{\bar{S}} \bar{j}_*& \bar{j}^0_* L^-_{\bar{\eta}} \\ 
L^+_{\bar{S}} \bar{j}_* & \bar{j}^0_* L^+_{\bar{\eta}}.\\ }; 
\path[->](a-1-1) edge (a-1-2); 
\path[->](a-1-1) edge  (a-2-1); 
\path[->](a-2-1) edge (a-2-2);
\path[->](a-1-2) edge  (a-2-2);
\end{tikzpicture}
\]
Applying $(\bar{i}^0)^*$ and using Proposition \ref{funcprop} iii), we get a commutative (up to natural isomorphism) diagram of transformations
\begin{equation}\label{nearbycomm}
\begin{tikzpicture}[baseline=(current  bounding  box.center)]
\matrix(a)[matrix of math nodes, 
row sep=1.5em, column sep=2em, 
text height=1.5ex, text depth=0.45ex] 
{L^-_{\bar{s}} \bar{i}^*\bar{j}_* & (\bar{i}^0)^* L^-_{\bar{S}} \bar{j}_* & (\bar{i}^0)^*\bar{j}^0_* L^-_{\bar{\eta}} \\ 
L^+_{\bar{s}} \bar{i}^*\bar{j}_* & (\bar{i}^0)^*L^+_{\bar{S}} \bar{j}_* & (\bar{i}^0)^* \bar{j}^0_* L^+_{\bar{\eta}}.\\ }; 
\path[->](a-1-1) edge (a-2-1);
\path[->](a-1-2) edge (a-1-1);
\path[->](a-2-2) edge node[above] {$\simeq$} (a-2-1);
\path[->](a-1-2) edge node[above] {$\simeq$} (a-1-3); 
\path[->](a-1-2) edge  (a-2-2); 
\path[->](a-2-2) edge (a-2-3);
\path[->](a-1-3) edge  (a-2-3);
\end{tikzpicture}
\end{equation}
Using Proposition \ref{funcprop} iii) and a limit argument we see that $L^\pm_{\bar{\eta}}\circ (\str)_{\bar{\eta}}\overset{\simeq}{\to}(\str)_{\bar{\eta}}\circ L^\pm_{\eta}$. This concludes the construction of the diagram above. By construction the transformations agree with the ones coming from the functorialities of the nearby cycles.\\
Now if $\calA$ is $\bbG_m$-monodromic, then by Lemma \ref{funcmonlem}, the complexes $\calA_{\bar{\eta}}$, $\bar{j}_*\calA_{\bar{\eta}}$ and $\Psi_X\calA$ are $\bbG_m$-monodromic and hence by Theorem \ref{Bradenthm}, all vertical arrows in \eqref{nearbycomm} are isomorphisms. This concludes the proof of the theorem. \end{proof}

\begin{ex} \label{p1example} Let $S=\Spec(\bbZ_p)$, and let $X$ be the flat projective $\bbZ_p$-scheme such that $X_\eta=\bbP^1_\eta$ and such that $X_s$ is the intersection of two $\bbP^1_s$'s meeting transversally at a single $s$-point $e_s$. The scheme $X$ is equipped with a $\bbG_{m}$-action inducing the usual action on $\bbP^1_\eta$. The $\bbQ_p$-points $0_\eta$ and $\infty_\eta$ extend by properness to $\bbZ_p$-points $0_S$ and $\infty_S$ which are fixed under the $\bbG_{m}$-action. Then on reduced loci $X^0=0_S\amalg \infty_S \amalg e_s$ is the subscheme of fixed points. The attractor (resp. repeller) is on reduced loci
\begin{equation}\label{exattrep}
X^+= (\bbA^1_S)^+\amalg \infty_S\amalg (\bbA^1_s)^+\;\;\;\;\text{(resp. $X^-=0_S\amalg (\bbA^1_S)^-\amalg (\bbA^1_s)^-$).}
\end{equation}
The maps $p^{\pm}\co X^\pm\to X$ are monomorphisms (because $X$ is separated) such that on intersections 
\[
X^+\times_XX^-=X^0\amalg \bbG_{m,\eta}\amalg \bbG_{m,s}\amalg \bbG_{m,s}.
\]
The morphisms $q^{\pm}\co X^\pm\to X^0$ are given by contracting \eqref{exattrep} to the fixed points. The complex $\calA=\bbZ/n\langle1\rangle$ on $X_\eta$ is $\bbG_{m}$-monodromic, and one computes for the hyperbolic localization
\[
L_{X_\eta/\eta}^{\pm}(\calA)=\bbZ/n\langle-1\rangle\oplus \bbZ/n\langle 1\rangle.
\] 
The nearby cycles $\Psi_{X^0}$ are constant, i.e. $\Psi_{X^0}\circ L_{X_\eta/\eta}^{\pm}(\calA)=\bbZ/n\langle-1\rangle\oplus \bbZ/n\langle 1\rangle$. Hence, Theorem \ref{nbhcthm} implies on compact cohomology
\[
R\Ga_c(X_s^+,\Psi_X(\calA))= \bbZ/n\langle-1\rangle\oplus \bbZ/n\langle 1\rangle.
\]
i.e. $R\Ga_c((\bbA_s^1)^+,\Psi_X(\calA))$ is $\bbZ/n\langle-1\rangle$ (resp. $0$) on the flat (resp. non-flat) copy of $\bbA^1$ in \eqref{exattrep}. 
\end{ex}


\end{document}